\newtheoremstyle{dotless}{}{}{\itshape}{}{\bfseries}{}{}{}
\theoremstyle{dotless}
\theoremstyle{plain}
\newtheorem{thm}{Theorem}[section]
\newtheorem{lem}[thm]{Lemma}
\newtheorem{prop}[thm]{Proposition}
\newtheorem{cor}[thm]{Corollary}
\theoremstyle{definition}
\newtheorem{defn}[thm]{Definition}
\newtheorem{rem}[thm]{Remark}
\newcommand{\N} {\mathbb{N}}
\newcommand{\R} {\mathbb{R}}
\newcommand{\C} {\mathbb{C}}
\newcommand{\K} {\mathbb{K}}
\newcommand{\D} {\mathbb{D}}
\newcommand{\oacx} {\overline{\operatorname{acx}}}
\providecommand{\differential}{\mathrm{d}}
\renewcommand{\d}{\differential}
\begin{document}

\title[Holomorphic]{Vector-valued holomorphic functions in several variables}
\author[K.~Kruse]{Karsten Kruse}
\address{Hamburg University of Technology \\ Institute of Mathematics \\
Am Schwarzenberg-Campus~3 \\
21073 Hamburg \\
Germany}
\email{karsten.kruse@tuhh.de}

\subjclass[2010]{Primary 46E40, Secondary 32A10, 46E10}

\keywords{vector-valued, holomorphic, weakly holomorphic, several variables, locally complete}

\date{\today}
\begin{abstract}
In the present paper we give some explicit proofs for folklore theorems on holomorphic functions 
in several variables with values in a locally complete locally convex Hausdorff space $E$ over $\C$. 
Most of the literature on vector-valued holomorphic functions is 
either devoted to the case of one variable or to infinitely many variables 
whereas the case of (finitely many) several variables is only touched or 
is subject to stronger restrictions on the completeness of $E$ like sequential completeness. 
The main tool we use is Cauchy's integral formula for derivatives for an $E$-valued holomorphic function 
in several variables which we derive via Pettis-integration. 
This allows us to generalise the known integral formula, 
where usually a Riemann-integral is used,
from sequentially complete $E$ to locally complete $E$. Among the classical theorems 
for holomorphic functions in several variables with values in 
a locally complete space $E$ we prove are the identity theorem, Liouville's theorem, 
Riemann's removable singularities theorem and the 
density of the polynomials in the $E$-valued polydisc algebra. 
\end{abstract}

\maketitle

\section{Introduction}

This is not a survey article but a comprehensive treatment of vector-valued holomorphic
functions in several variables, i.e.\ holomorphic functions $f\colon \Omega\to E$ from 
an open set $\Omega\subset\C^{d}$ to a complex locally convex Hausdorff space $E$. 
We give complete proofs in the case when $E$ is locally complete; some of these proofs are new in this context,
some are only touched upon in the literature with reference to the case of one
variable.

There is a lot of work available on $\C$-valued holomorphic functions in several variables, 
like the books by Gunning and Rossi \cite{gunning1965}, H\"ormander \cite{H3}, Jarnicki and Pflug \cite{pflug2008} 
and Krantz \cite{krantz2001}. 
But when it comes to vector-valued holomorphic functions,
then most of the work is either restricted to the case of one variable, i.e.\ $d=1$, 
or directly jumps to infinitely many variables, i.e.\ 
$\Omega$ is an open subset of a complex infinite dimensional locally convex Hausdorff space $F$. 
Holomorphy of vector-valued functions in infinitely many variables is discussed for instance by Mujica in \cite{mujica1985}, where 
$F$ and $E$ are Banach spaces, and by Dineen in \cite{dineen1981}, \cite{dineen1999} for general locally convex spaces $F$ and $E$. 
These references also contain results on finitely many variables ($F=\C^{d}$) but the emphasis is on infinitely many variables. 

Banach-valued holomorphic functions in one variable are handled and characterised by Dunford \cite[Theorem 76, p.\ 354]{Dunford1938} 
and more recently by Arendt and Nikolski \cite{Arendt2000}, \cite{Arendt2006}.
Holomorphic functions in one variable with values in a locally convex Hausdorff space $E$ are considered in 
\cite[Satz 10.11, p.\ 241]{Kaballo} by Kaballo if $E$ is quasi-complete, in \cite{Grothendieck1953} by Grothendieck 
if $E$ has the convex compactness property (cf.\ \cite[16.7.2 Theorem, p.\ 362-363]{Jarchow}), in \cite{Bogdanowicz1969} by Bogdanowicz 
if $E$ is sequentially complete, in \cite{grosse-erdmann1992}, \cite{grosse-erdmann2004} by Grosse-Erdmann 
if $E$ is locally complete and several equivalent conditions describing holomorphy are given. 
In particular, in all these cases holomorphy coincides with \emph{weak holomorphy} 
which means that $f\colon \Omega\to E$ is holomorphic if and only if the $\C$-valued functions $e'\circ f$ are holomorphic 
for each $e'\in E'$ where $E'$ is the dual space of $E$. Further, the interesting question is treated under which conditions 
one can replace $E'$ by a separating subspace $G\subset E'$ and still can conclude holomorphy 
from the holomorphy of $e'\circ f$ for each $e'\in G$.
More generally, the extension problem for $E$-valued holomorphic functions 
which have weakly holomorphic extensions is studied, in one variable 
by Grosse-Erdmann in \cite{grosse-erdmann2004}, in several variables by Bonet, Frerick and Jord\'a in \cite{B/F/J}, \cite{F/J} 
and Vitali's and Harnack's type results are derived in \cite{jorda2007} if $E$ is locally complete.
Further results on vector-valued holomorpic functions in several variables may be found in \cite{bochnak1971} by Bochnak and Siciak 
where $E$ is sequentially complete and in a survey by Barletta and Dragomir \cite{barletta2009_1}, 
extended in \cite{barletta2009_2}, but here $E$ is often restricted to have the convex compactness property 
or even to be a Fr\'echet space. 

The main purpose of the present paper is to derive some equivalent characterisations of holomorphic functions in several variables 
with values in a locally complete space $E$ (see \prettyref{cor:hol.in.O}, 
\prettyref{thm:holom.equiv}, \prettyref{cor:hol_infinitely_compl_diff}) 
with explicit proofs avoiding the usual \emph{`like in the case of one variable'} 
(see e.g.\ the four-line \cite[Section 4.1, p.\ 409]{grosse-erdmann2004}). 
Of course, the short reference to the case of one variable is often due 
to constraints, like page limits or the perception as folklore since it is known to everyone from the field how to transfer the results 
from one variable to several variables but never written down, not least because of the low chance to get it published.
This is the reason why we wrote this down so that we have a reference with explicit proofs, 
not more, not less. Anyway, our main tool to obtain the equivalent characterisations 
of holomorphic functions in several variables with values 
in a locally complete space $E$ is Cauchy's integral formula for derivatives which we obtain 
via Pettis-integration (\prettyref{thm:cauchy.int.formula}). 
To the best of our knowledge Cauchy's integral formula for derivatives for holomorphic functions 
with values in a locally complete space $E$ is not contained in the literature. 
Usually, Riemann-integration is used instead of Pettis-integration and $E$ has to be sequentially complete or the derivatives have 
to be considered in the completion of $E$. On the way to our main \prettyref{thm:holom.equiv} 
we derive Fubini's theorem (\prettyref{thm:fubini}) 
and Leibniz' rule for differentiation under the integral sign (\prettyref{lem:compl.diff.under.int}) 
for holomorphic functions with values in a 
locally complete space. We use our main theorem to prove some classical theorems like the identity theorem (\prettyref{thm:identity}), 
Liouville's theorem (\prettyref{thm:liouville}), Riemann's removable singularities theorem (\prettyref{thm:riemann}) and the 
density of the polynomials in the $E$-valued polydisc algebra (\prettyref{cor:disc-algebra}).

\section{Notation and Preliminaries}

We equip the spaces $\R^{d}$ and $\C^{d}$, $d\in\N$, with the usual Euclidean norm $|\cdot|$. 
Moreover, we denote by $\mathbb{B}_{r}(x):=\{w\in\R^{d}\;|\;|w-x|<r\}$ the ball around $x\in\R^{d}$ 
with radius $r>0$ and use the same notation when $\R^{d}$ is replaced by $\C^{d}$. 
Furthermore, for a subset $M$ of a topological space $X$ we denote by $\overline{M}$ the closure of $M$ in $X$.
For a subset $M$ of a topological vector space $X$, we write $\oacx(M)$ 
for the closure of the \emph{absolutely convex hull} $\operatorname{acx}(M)$ of $M$ in $X$.

By $E$ we always denote a non-trivial locally convex Hausdorff space (\emph{lcHs}) over the field 
$\K=\R$ or $\C$ equipped with a directed fundamental system of 
seminorms $(p_{\alpha})_{\alpha\in \mathfrak{A}}$. 
If $E=\K$, then we set $(p_{\alpha})_{\alpha\in\mathfrak{A}}:=\{|\cdot|\}$. 
Further, we write $\widehat{E}$ for the completion of $E$ and for a disk $D\subset E$, i.e.\ a bounded, absolutely convex set, 
we write $E_{D}:=\bigcup_{n\in\N}nD$ which becomes a normed vector space if it is equipped with the
gauge functional of $D$ as a norm (see \cite[p.\ 151]{Jarchow}). The space $E$ is called \emph{locally 
complete} if $E_{D}$ is a Banach space for every closed disk $D\subset E$ (see \cite[10.2.1 Proposition, p.\ 197]{Jarchow}). 
In particular, every sequentially complete space is locally complete and this implication is strict. 
Further, we recall the following definitions from \cite[p.\ 259]{J.Voigt}
and \cite[9-2-8 Definition, p.\ 134]{Wilansky}. A locally convex Hausdorff space is said 
to have the \emph{[metric] convex compactness property ([metric] ccp)} if 
the closure of the absolutely convex hull of every [metrisable] compact set is compact. 
Equivalently this definition can be phrased with the convex hull instead of the absolutely convex hull. 
Every locally convex Hausdorff space with ccp has metric ccp, every quasi-complete locally convex 
Hausdorff space has ccp, every sequentially complete locally convex Hausdorff space 
has metric ccp and every locally convex Hausdorff space with metric ccp is locally complete and
all these implications are strict (see \cite[p.\ 1512-1513]{kruse2020} and the references therein).
For more details on the theory of locally convex spaces see \cite{F/W/Buch}, \cite{Jarchow} or \cite{meisevogt1997}.

For $k\in\N_{0,\infty}:=\N_{0}\cup\{\infty\}$ we denote by $\mathcal{C}^{k}(\Omega,E)$ the space of $k$-times continuously partially differentiable 
functions on an open set $\Omega\subset\R^{d}$ with values in a locally convex Hausdorff space $E$. We say that a function $f\colon\Omega\to E$ 
is weakly $\mathcal{C}^{k}$ if $e'\circ f\in \mathcal{C}^{k}(\Omega):=\mathcal{C}^{k}(\Omega,\K)$ for each $e'\in E'$.
By $L(F,E)$ we denote the space of continuous linear operators from $F$ to $E$ where $F$ and $E$ are locally convex Hausdorff spaces. 
If $E=\K$, we just write $F':=L(F,\K)$ for the dual space.
We write $L_{t}(F,E)$ for the space $L(F,E)$ equipped with the locally convex topology of uniform convergence on 
compact subsets of $F$ if $t=c$, on the absolutely convex, compact subsets of $F$ if $t=\kappa$ and on the bounded subsets of $F$ if $t=b$.
The so-called \emph{$\varepsilon$-product of Schwartz} is defined by 
\[
F\varepsilon E:=L_{e}(F_{\kappa}',E)
\]
where $L(F_{\kappa}',E)$ is equipped with the topology of uniform convergence on the equicontinuous subsets of $F'$ 
(see e.g.\ \cite[Chap.\ I, \S1, D\'{e}finition, p.\ 18]{Sch1}). 
For more information on the theory of $\varepsilon$-products see \cite{Jarchow} and \cite{Kaballo}.

\section{Notions of differentiability}

\begin{defn}[{(weakly, separately, G\^{a}teaux-) differentiable, holomorphic}]\label{def:holom}
Let $E$ be an lcHs over $\K$, let $\Omega\subset\K^{d}$ be open and $f\colon \Omega \to E$.
\begin{enumerate}
\item [a)] $f$ is called \emph{differentiable} (on $\Omega$) if for every $z\in\Omega$ there is a $\K$-linear map 
$df(z):=d_{\K}f(z)\colon\K^{d}\to\widehat{E}$ such that
\[
\lim_{\substack{w\to z\\w\in\Omega,w\neq z}}\frac{f(w)-f(z)-df(z)[w-z]}{|w-z|}=0\quad\text{in}\;\widehat{E}
\]
and the map $df(\cdot)[v]\colon\Omega\to \widehat{E}$ is continuous for every $v\in\K^{d}$. 
\item[b)] $f$ is called the \emph{G\^{a}teaux-differentiable} (on $\Omega$) if 
\[
 Df(z)[v]:=D_{\K}f(z)[v]:=\lim_{\substack{h\to 0\\h\in\K,h \neq 0}}\frac{f(z+h v)-f(z)}{h}
 \quad\text{exists in}\; \widehat{E}
\]
for every $z\in\Omega$ and $v\in\K^{d}$.
\item [c)] If $v=e_{j}$ is the $j$th unit vector for $1\leq j\leq d$ and $z\in\Omega$, 
we write 
\[
(\partial_{\K}^{e_{j}})^{E}f(z):=(\partial_{z_{j}})^{E}f(z):=D_{\K}f(z)[e_{j}]
\]
if $D_{\K}f(z)[e_{j}]$ exists in $E$. 
Especially, we use $f'(z):=(\partial_{\K}^{e_{1}})^{E}f(z)$ if $d=1$.
\item [d)] For $z=(z_{1},\ldots,z_{d})\in\Omega$ we define the continuous function
\[
 \pi_{z,j}\colon \K\to \K^{d},\;\pi_{z,j}(w):=(z_{1},\ldots,z_{j-1},w,z_{j+1},\ldots,z_{d}).
\]
$f$ is called \emph{separately differentiable} (on $\Omega$)
if $f$ is a differentiable function in each variable, 
i.e.\ $f\circ\pi_{z,j}\colon \pi_{z,j}^{-1}(\Omega)\to E$ is differentiable for every $z\in\Omega$ and 
$1\leq j\leq d$.
\item [e)] $f$ is called \emph{weakly (separately, G\^{a}teaux-) differentiable} (on $\Omega$) if 
$e'\circ f\colon \Omega \to\K$ is (separately, G\^{a}teaux-) differentiable for every $e'\in E'$. 
\item [f)] If $\K=\C$, we say \emph{holomorphic} or complex differentiable instead of differentiable on the open set $\Omega$ and, 
if $\K=\R$, we sometimes say \emph{real differentiable}.
\end{enumerate}
\end{defn}

\begin{rem}\label{rem:properties.of.holom.map}
Let $E$ be an lcHs over $\K$, $\Omega\subset\K^{d}$ open and $f\colon\Omega\to E$.  
\begin{enumerate}
\item [a)] If $f$ is differentiable, then $df\colon\Omega\times\K^{d}\to \widehat{E}$ is continuous.
\item [b)] If $f$ is differentiable, then $f\colon\Omega\to E$ is continuous. 
%\item [c)] A function $f\colon\Omega\to E$ is G\^{a}teaux-holomorphic if and only if it is separately holomorphic. 
\item [c)] If $f$ is differentiable, then $f$ is G\^{a}teaux- and separately differentiable and 
\[
df(z)[v]=Df(z)[v]=\sum_{j=1}^{d}\left(\partial_{\K}^{e_{j}}\right)^{\widehat{E}}f(z)v_{j},
\quad z\in\Omega,\,v=(v_{1},\ldots,v_{d})\in\K^{d}.
\]
\item [d)] If $f$ is (separately, G\^{a}teaux-) differentiable, then $f$ is weakly (separately, G\^{a}teaux-) differentiable.
\item [e)] If $(\partial_{\K}^{e_{j}})^{\widehat{E}}f(z)\in E$ for some $1\leq j\leq d$ and $z\in\Omega$, then 
\[
(\partial_{\K}^{e_{j}})^{\widehat{E}}f(z)=(\partial_{\K}^{e_{j}})^{E}f(z).
\]
\end{enumerate}
\end{rem}
\begin{proof}
a) First, we remark that $df(z)\colon\K^{d}\to \widehat{E}$ is continuous 
for every $z\in\Omega$ since $df(z)$ is linear and $\K^{d}$ a finite dimensional normed space.
%\begin{align*}
%p_{\alpha}(df(z)[v])&=p_{\alpha}\bigl(df(z)[\sum_{j=1}^{d}v_{j}e_{j}]\bigr)
%=p_{\alpha}\bigl(\sum_{j=1}^{d}v_{j}df(z)[e_{j}]\bigr)
%\leq \max_{1\leq j\leq d}p_{\alpha}(df(z)[e_{j}])\sum_{j=1}^{d}|v_{j}|\\
%&\leq \max_{1\leq j\leq d}p_{\alpha}(df(z)[e_{j}])\sqrt{d}|v|,\quad v\in\K^{d}
%\end{align*}
Let $(z,v)\in\Omega\times\K^{d}$, $\varepsilon>0$ and $\alpha\in\widehat{\mathfrak{A}}$ where 
$(\widehat{E},(p_{\alpha})_{\alpha\in\widehat{\mathfrak{A}}})$ is the completion of $E$. 
For every $(w,x)\in\Omega\times\K^{d}$ we estimate
\begin{flalign*}
&\quad p_{\alpha}(df(w)[x]-df(z)[v])\\
&\leq p_{\alpha}(df(w)[x-v])+p_{\alpha}(df(w)[v]-df(z)[v])\\
&\leq\sqrt{d}\sup_{1\leq j\leq d}p_{\alpha}(df(w)[e_{j}])|x-v|+p_{\alpha}(df(w)[v]-df(z)[v]).
\end{flalign*}
Since $df(\cdot)[v]\colon\Omega\to\widehat{E}$ is continuous, there is $\delta=\delta_{\alpha,z,v}>0$ such that 
for all $w\in\Omega$ with $|w-z|<\delta$ we have 
\[
p_{\alpha}(df(w)[v]-df(z)[v])<\varepsilon/2.
\]
As $\Omega$ is open, there is $\delta_{0}>0$ such that $K_{z}:=\overline{\mathbb{B}_{\delta_{0}}(z)}\subset\Omega$. 
From the compactness of $K_{z}$ and the continuity of $df(\cdot)[e_{j}]\colon\Omega\to\widehat{E}$ 
for every $1\leq j\leq d$ we deduce that 
\[
C_{j,z}:=\sup_{w\in K_{z}}p_{\alpha}(df(w)[e_{j}])<\infty.
\]
Thus we obtain for every $(w,x)\in\Omega\times\K^{d}$ with
\[
 |(w,x)-(z,v)|<\min\Bigl(\delta,\delta_{0},\frac{\varepsilon}{2(1+\sqrt{d}\sup_{1\leq j\leq d}C_{j,z})}\Bigr)
\]
that 
\[
 p_{\alpha}(df(w)[x]-df(z)[v])\leq \sqrt{d}\sup_{1\leq j\leq d}C_{j,z}|x-v|+(\varepsilon/2)<(\varepsilon/2)+(\varepsilon/2)=\varepsilon.
\]

b) Let $z\in\Omega$, $\varepsilon>0$ and $\alpha\in\widehat{\mathfrak{A}}$. 
Then there is $\delta>0$ such that for all $w\in\K^{d}$ with $0<|w-z|<\delta$ 
we have
\[
p_{\alpha}\Bigl(\frac{f(w)-f(z)}{|w-z|}\Bigr)-p_{\alpha}\Bigl(\frac{df(z)[w-z]}{|w-z|}\Bigr)
\leq p_{\alpha}\Bigl(\frac{f(w)-f(z)-df(z)[w-z]}{|w-z|}\Bigr)<1.
\]
It follows from the continuity of $df(z)\colon\K^{d}\to \widehat{E}$ that there is $C>0$ such that
\[
p_{\alpha}\Bigl(\frac{f(w)-f(z)}{|w-z|}\Bigr)<1+p_{\alpha}\Bigl(\frac{df(z)[w-z]}{|w-z|}\Bigr)
\leq 1+C\frac{|w-z|}{|w-z|}=1+C.
\]
Thus we have for all $w\in\K^{d}$ with $|w-z|<\min(\varepsilon/(1+C),\delta)$ that
\[
p_{\alpha}(f(w)-f(z))\leq (1+C)|w-z|<\varepsilon,
\]
implying $f\in\mathcal{C}^{0}(\Omega,\widehat{E})$. Since $f(\Omega)\subset E$, we derive that $f\colon\Omega\to E$ is continuous.

c) Let $z\in\Omega$, $v\in\K^{d}$, $v\neq 0$, $\alpha\in\widehat{\mathfrak{A}}$ and $h\in\K$, $h\neq 0$, 
such that $z+hv\in\Omega$. We observe that
\[
 p_{\alpha}\Bigl(\frac{f(z+h v)-f(z)}{h}-df(z)[v]\Bigr)
= |v|p_{\alpha}\Bigl(\frac{f(z+h v)-f(z)-df(z)[hv]}{|h v|}\Bigr),
\]
which yields the G\^{a}teaux-differentiability of $f$ and $Df(z)[v]=df(z)[v]$ because $f$ is differentiable. 
Due to the linearity of $df(z)$ we obtain for $v=(v_{1},\ldots,v_{d})\in\K^{d}$ 
\[
df(z)[v]=\sum_{j=1}^{d}df(z)[e_{j}]v_{j}=\sum_{j=1}^{d}Df(z)[e_{j}]v_{j}
=\sum_{j=1}^{d}\left(\partial_{\K}^{e_{j}}\right)^{\widehat{E}}f(z)v_{j}.
\]
Finally, let $1\leq j\leq d$ and $z_{0}\in\pi_{z,j}^{-1}(\Omega)\subset\K$. 
Clearly, $v_{0}\mapsto df(\pi_{z,j}(z_{0}))[e_{j}]\cdot v_{0}$ is a linear map from $\K$ to $\widehat{E}$ 
and $df(\pi_{z,j}(\cdot))[e_{j}]\cdot v_{0}\colon \pi_{z,j}^{-1}(\Omega)\to \widehat{E}$ is continuous for every $v_{0}\in\C$
by the differentiability of $f$ and the continuity of $\pi_{z,j}$.
We set $\widetilde{z}:=\pi_{z,j}(z_{0})$ and get for $w_{0}\in\pi_{z,j}^{-1}(\Omega)$, $w_{0}\neq z_{0}$, that
\begin{flalign*}
 &\quad p_{\alpha}\Bigl(\frac{(f\circ\pi_{z,j})(w_{0})-(f\circ\pi_{z,j})(z_{0})-df(\pi_{z,j}(z_{0}))[e_{j}]
 \cdot(w_{0}-z_{0})}{|w_{0}-z_{0}|}\Bigr)\\
 &=p_{\alpha}\Bigl(\frac{f(\widetilde{z}+(w_{0}-z_{0})e_{j})-f(\widetilde{z})}{w_{0}-z_{0}}-df(\widetilde{z})[e_{j}]\Bigr)\\
 &=p_{\alpha}\Bigl(\frac{f(\widetilde{z}+(w_{0}-z_{0})e_{j})-f(\widetilde{z})}{w_{0}-z_{0}}-Df(\widetilde{z})[e_{j}]\Bigr).
\end{flalign*}
Letting $w_{0}\to z_{0}$, we derive that $f$ is separately differentiable and 
\[
 d(f\circ\pi_{z,j})(z_{0})[v_{0}]=df(\pi_{z,j}(z_{0}))[e_{j}]\cdot v_{0},\quad v_{0}\in\K.
\]

d) We just have to observe that $(\widehat{E})'=E'$ by \cite[3.4.4 Corollary, p.\ 63]{Jarchow} and get for every $e'\in E'$ 
\[
 d(e'\circ f)= e'\circ df,\quad D(e'\circ f)=e'\circ Df,\quad d(e'\circ f\circ \pi_{z,j})=e'\circ d(f\circ \pi_{z,j})
 %, z\in \Omega, 1\leq j\leq d,
\]
for differentiable, G\^{a}teaux-differentiable and separately differentiable $f$, respectively. 

e) Follows directly from Definition \ref{def:holom} c) and the fact that $\widehat{E}$ is Hausdorff by 
\cite[3.3.2 Theorem, p.\ 60]{Jarchow}.
\end{proof}

We denote by
\[
\phi\colon\C^{d}\to\R^{2d},\;
\phi(\operatorname{Re}z_{1}+i\operatorname{Im}z_{1},\ldots,\operatorname{Re}z_{d}+i\operatorname{Im}z_{d})
:=(\operatorname{Re}z_{1},\operatorname{Im}z_{1},\ldots,\operatorname{Re}z_{d},\operatorname{Im}z_{d}),
\]
the isometric isomorphism between $\C^{d}$ and $\R^{2d}$ with respect 
to Euclidean norm on both sides and remark the following.

\begin{rem}\label{rem:compl-diff.is.real-diff}
Let $E$ be an lcHs over $\C$, $\Omega\subset\C^{d}$ open and $f\colon\Omega\to E$ 
holomorphic. Then $f\circ\phi^{-1}$ is real differentiable on $\phi(\Omega)$ and 
\begin{equation}\label{eq:compl-diff.is.real-diff.1}
d_{\R}(f\circ\phi^{-1})(x)[v]=d_{\C}f(\phi^{-1}(x))[\phi^{-1}(v)],
\quad x\in\phi(\Omega),\, v\in\R^{2d}.
\end{equation}
In particular, $f\circ\phi^{-1}\in\mathcal{C}^{1}(\phi(\Omega),\widehat{E})$.
\end{rem}
\begin{proof}
$E$ is also an lcHs over $\R$ and 
equation \eqref{eq:compl-diff.is.real-diff.1} follows from 
\begin{flalign*}
&\quad\frac{(f\circ\phi^{-1})(y)-(f\circ\phi^{-1})(x)-d_{\C}f(\phi^{-1}(x))[\phi^{-1}(x-y)]}{|x-y|}\\
&=\frac{f(\phi^{-1}(y))-f(\phi^{-1}(x))-d_{\C}f(\phi^{-1}(x))[\phi^{-1}(x)-\phi^{-1}(y)]}{|\phi^{-1}(x)-\phi^{-1}(y)|}
\end{flalign*}
for $x,y\in\phi(\Omega)$, $x\neq y$, and the holomorphy of $f$ on $\Omega$.
The $\R$-linearity of $d_{\R}(f\circ\phi^{-1})(x)$ for every $x\in\phi(\Omega)$ and
the continuity of $d_{\R}(f\circ\phi^{-1})(\cdot)[v]\colon\phi(\Omega)\to \widehat{E}$ 
for every $v\in\R^{2d}$ are a direct consequence of \eqref{eq:compl-diff.is.real-diff.1}.

Since $f$ is continuous on $\Omega$ by \prettyref{rem:properties.of.holom.map} b), 
the map $f\circ\phi^{-1}$ is continuous on $\phi(\Omega)$. 
Further, if $e_{j}$ is the $j$th unit vector in $\R^{2d}$, we obtain 
\[
(\partial^{e_{j}}_{\R})^{\widehat{E}}(f\circ\phi^{-1})(x)=d_{\R}(f\circ\phi^{-1})(x)[e_{j}]
=d_{\C}f(\phi^{-1}(x))[\phi^{-1}(e_{j})],\quad x\in\phi(\Omega).
\]
It follows that $f\circ\phi^{-1}$ is continuously partially (real) differentiable on $\phi(\Omega)$ 
because $d_{\C}f(\cdot)[\phi^{-1}(e_{j})]$ is continuous for every $1\leq j\leq 2d$.
\end{proof}

\section{Curve integrals via the Pettis-integral}

Our next goal is to derive Cauchy's integral formula (for derivatives) for a holomorphic function with values in a locally 
complete lcHs $E$. We use the notion of a Pettis-integral to define integration of a vector-valued function. 

\begin{defn}[Pettis-integral]\label{def:integral}
 	Let $\Omega\subset\R^{d}$, $E$ an lcHs, $(\Omega, \mathscr{L}(\Omega), \lambda)$ be the measure space of Lebesgue measurable sets 
 	and $\mathcal{L}^{1}(\Omega,\lambda)$ the space of $\K$-valued Lebesgue-integrable (equivalence classes of) functions on $\Omega$.
 	A function $f\colon \Omega \to E$ is called weakly measurable if the function 
 	$e'\circ  f  \colon X\to \K$, $ (e'\circ f )(x) := \langle e' , f(x) \rangle:=e'(f(x)),$ 
 	is Lebesgue measurable for all $e' \in E'$.  
 	A weakly measurable function is said to be weakly integrable 
 	if $e' \circ f \in \mathcal{L}^{1}(\Omega,\lambda)$. 
 	A function $f\colon \Omega\to E$ is called \emph{Pettis-integrable} on $\Lambda\in\mathscr{L}(\Omega)$ 
 	if it is weakly integrable on $\Lambda$ and
 	\[
 	\exists\; e_{\Lambda}(f) \in E \; \forall e' \in E': \langle e' , e_{\Lambda}(f) \rangle 
 	= \int\limits_{\Lambda} \langle e' , f(x) \rangle \d x. 
 	\]  
 	In this case $e_{\Lambda}(f)$ is unique due to $E$ being Hausdorff and we define the \emph{Pettis-integral} of $f$ on $\Lambda$ by 
 	\[
 	 \int\limits_{\Lambda} f(x) \d x:=e_{\Lambda}(f).
 	\]	
\end{defn}	 

A function $\gamma\colon [a,b]\to \C$ is called a \emph{$\mathcal{C}^{1}$-curve} (in $\C$) if $\gamma$ can be extended to 
a continuously differentiable function on an open set $X\subset\R$ with $[a,b]\subset X$. 
%A function $\gamma\colon [a,b]\to \C$ is called a piecewise $\mathcal{C}^{1}$-curve (p-$\mathcal{C}^{1}$-curve) in $\C$ 
%if there is a partition $a=a_{1}<a_{2}<\ldots<a_{k+1}=b$ of $[a,b]$ such that 
%$\gamma_{n}:=\gamma_{\mid [a_{n},a_{n+1}]}$ is a $\mathcal{C}^{1}$-curve in $\C$ 
%for every $1\leq n\leq k$.
For a family $(\gamma_{k})_{1\leq k\leq d}$ of $\mathcal{C}^{1}$-curves $\gamma_{k}\colon [a,b]\to \C$ a function
\[
 \gamma\colon [a,b]^{d}\to\C^{d},\;\gamma(t_{1},\ldots,t_{d}):=(\gamma_{1}(t_{1}),\ldots,\gamma_{d}(t_{d})),
\]
is called a \emph{$\mathcal{C}^{1}$-curve} (in $\C^{d}$) and we set 
\[
l(\gamma):=\int_{[a,b]^{d}}\prod_{k=1}^{d}|\gamma_{k}'(t_{k})|\d t=
\prod_{k=1}^{d}\int_{a}^{b}|\gamma_{k}'(t_{k})|\d t_{k}
\]
which is the product of the \emph{lengths} of the curves $\gamma_{k}$.
We say that $\gamma$ is a \emph{$\mathcal{C}^{1}$-curve in $\Omega\subset\C^{d}$} 
if there are open sets $X_{k}\subset\R$ such that $[a,b]\subset X_{k}$ and $\gamma_{k}$ can be extended to 
a continuously differentiable function $\widetilde{\gamma}_{k}$ on $X_{k}$ for every $1\leq k\leq d$ and 
the so-defined extension $\widetilde{\gamma}:=(\widetilde{\gamma}_{k})_{k}$ of $\gamma$ 
on the open set $X:=\prod_{1\leq k\leq d}X_{k}\subset\R^{d}$ 
fulfils $\widetilde{\gamma}(X)\subset\Omega$.

Let $E$ be an lcHs over $\C$, $\Omega\subset\C^{d}$ and $\gamma\colon [a,b]^{d}\to \C^{d}$ be a $\mathcal{C}^{1}$-curve in $\Omega$. 
We define the (Pettis)-integral of a function $f\colon\Omega\to E$ along $\gamma$ by 
\[
 \int_{\gamma}f(z)\d z:=\int_{[a,b]^{d}}f(\gamma(t))\prod_{k=1}^{d}\gamma_{k}'(t_{k})\d t
\]
if the Pettis-integral on the right-hand side exists. If the integral exists, we call $f$ \emph{integrable along $\gamma$}.
Since $\gamma$ is a $\mathcal{C}^{1}$-curve in $\Omega$, 
there is some open set $X\subset\R^{d}$ 
such that $[a,b]^{d}\subset X$ and $\gamma$ can be extended to a $\mathcal{C}^{1}$-function $\widetilde{\gamma}$ on $X$ with 
$\widetilde{\gamma}(X)\subset\Omega$. If the extension of the factor of the integrand given by
\[
 g\colon X\to E,\; g(t):=f(\widetilde{\gamma}(t)),
\]
is a weakly $\mathcal{C}^{1}$ function on $X$, we call $f$ \emph{weakly $\gamma$-$\mathcal{C}^{1}$}. 
%We define the (Pettis)-integral of a function $f\colon\Omega\to E$ along a p-$\mathcal{C}^{1}$-curve 
%$\gamma\colon [a,b]^{d}\to \C^{d}$ 
%with $\gamma([a,b]^{d})\subset \Omega$ by
%\[
% \int_{\gamma}f(z)\d z:=\sum_{n=1}^{k}\int_{\gamma_{n}}f(z)\d z
%\]
%if the Pettis-integrals on the right-hand side exist.
% We say that $f\colon\Omega\to E$ is weakly $\mathcal{C}^{1}$ on an open subset $\Omega\subset\C^{d}$ 
% if $f\circ \phi^{-1}\colon\phi(\Omega)\to E$ is weakly $\mathcal{C}^{1}$.

\begin{prop}\label{prop:path.integral}
Let $E$ be a locally complete lcHs over $\C$, $\Omega\subset\C^{d}$ open, 
$\gamma$ a $\mathcal{C}^{1}$-curve in $\Omega$ and $f\colon\Omega\to E$. 
\begin{enumerate}
\item [a)] If $f$ is weakly $\gamma$-$\mathcal{C}^{1}$, then $f$ is integrable along $\gamma$.
\item [b)] If $f\circ\phi^{-1}\colon \phi(\Omega) \to E$ is weakly $\mathcal{C}^{1}$, then $f$ is weakly 
$\gamma$-$\mathcal{C}^{1}$ in $\Omega$. 
\item [c)] If $f\colon\Omega\to E$ is holomorphic, then $f$ is weakly $\gamma$-$\mathcal{C}^{1}$.
\end{enumerate}
\end{prop}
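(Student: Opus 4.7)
Overall, the three parts sit in a natural order: (b) is a direct chain-rule computation, (c) reduces to (b) via \prettyref{rem:compl-diff.is.real-diff}, and (a) is the substantive claim for which local completeness is used. I would dispatch (b) and (c) first.

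For (b), fix $e'\in E'$ and decompose $e'\circ g=(e'\circ f\circ\phi^{-1})\circ(\phi\circ\widetilde{\gamma})$. The map $\widetilde{\gamma}\colon X\to\Omega$ is a $\mathcal{C}^{1}$-extension of $\gamma$, and since $\phi$ is an $\R$-linear isometry, $\phi\circ\widetilde{\gamma}\colon X\to\R^{2d}$ is $\mathcal{C}^{1}$ with image in $\phi(\Omega)$. Together with the hypothesis $e'\circ f\circ\phi^{-1}\in\mathcal{C}^{1}(\phi(\Omega))$, the scalar chain rule yields $e'\circ g\in\mathcal{C}^{1}(X)$, which is exactly the weakly-$\mathcal{C}^{1}$ condition defining weak $\gamma$-$\mathcal{C}^{1}$. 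For (c), \prettyref{rem:compl-diff.is.real-diff} gives $f\circ\phi^{-1}\in\mathcal{C}^{1}(\phi(\Omega),\widehat{E})$; composing with any $e'\in E'$ and using $(\widehat{E})'=E'$ produces $e'\circ f\circ\phi^{-1}\in\mathcal{C}^{1}(\phi(\Omega))$, so $f\circ\phi^{-1}$ is weakly $\mathcal{C}^{1}$ and (b) applies.

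For (a), weak integrability is immediate since $(e'\circ g)\cdot\prod_{k}\gamma_{k}'$ is continuous on the compact set $[a,b]^{d}$. The real task is to produce a Pettis representer in $E$ of
\[
T(e'):=\int_{[a,b]^{d}}(e'\circ g)(t)\prod_{k=1}^{d}\gamma_{k}'(t_{k})\,\d t.
\]
My first step is to upgrade weak continuity of $g$ to genuine continuity $g\colon X\to E$: the scalar mean-value theorem applied to $e'\circ g$ shows that the difference quotients of $g$ at any $t_{0}\in X$ are weakly bounded, hence bounded by Mackey's theorem, which yields $p_{\alpha}(g(t)-g(t_{0}))=O(|t-t_{0}|)$ in every continuous seminorm $p_{\alpha}$ of $E$. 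The integrand $h(t):=g(t)\prod_{k}\gamma_{k}'(t_{k})$ is then continuous $[a,b]^{d}\to E$ with compact image $K$, and the closed absolutely convex hull $D:=\oacx(K)$ is a closed bounded disk in $E$; local completeness of $E$ makes $E_{D}$ a Banach space.

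The main obstacle is to place the integral inside $E$ rather than only in $\widehat{E}$. My plan is to approximate $T$ by Riemann sums of $h$ over partitions of $[a,b]^{d}$: each sum lies in $l(\gamma)D\subset E_{D}$, uniform continuity of $h$ makes the net Cauchy in every continuous seminorm of $E$, and the scalar Riemann sums of $(e'\circ g)\cdot\prod_{k}\gamma_{k}'$ converge to $T(e')$. The delicate point — and precisely where local completeness is decisive — is that being Cauchy in $E$ together with boundedness in $E_{D}$ does not in general imply Cauchy in the strictly finer $E_{D}$-norm. I expect to have to exploit the weak-$\mathcal{C}^{1}$ regularity of $g$ and the $\mathcal{C}^{1}$-regularity of the curves $\gamma_{k}$ more carefully, for instance by enlarging $D$ to absorb derivative data, so that refinements of a Riemann sum differ by a quantity whose $E_{D}$-norm tends to zero with the mesh. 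Once the Riemann sums form a Cauchy sequence in $E_{D}$, their limit $I\in E_{D}\subset E$ represents $T$ by passing to the scalar limit in the defining identity, giving the required Pettis integral.
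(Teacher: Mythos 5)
Parts b) and c) are correct and essentially identical to the paper's own proofs. Part a) is where the comparison matters, and it is there that your argument has a genuine gap. The paper does not use Riemann sums at all: it observes that
\[
|I_{f}(e')|:=\Bigl|\int_{[a,b]^{d}}\langle e',f(\gamma(t))\rangle\prod_{k=1}^{d}\gamma_{k}'(t_{k})\,\d t\Bigr|\leq l(\gamma)\sup_{x\in C}|e'(x)|,\qquad C:=\oacx\bigl(f(\gamma([a,b]^{d}))\bigr),
\]
quotes the fact that for a weakly $\mathcal{C}^{1}$ map into a locally complete space the closed absolutely convex hull of the image of a compact set is compact (this is exactly where local completeness enters), concludes that $I_{f}$ is a continuous linear functional on $E_{\kappa}'$, and identifies $(E_{\kappa}')'$ with $E$ by the Mackey--Arens theorem. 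No continuity of $f\circ\gamma$ in $E$, no local Banach space $E_{D}$ and no approximation procedure are needed.

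Your route through Riemann sums in $E_{D}$ could in principle be completed, but as written it stops precisely at the step that carries the whole statement: you announce that refinements of a Riemann sum differ by a quantity whose $E_{D}$-norm tends to zero with the mesh and defer its proof to an unspecified enlargement of $D$. Since, as you yourself note, being Cauchy in $E$ together with boundedness in $E_{D}$ does not give the Cauchy property in $E_{D}$, nothing has been established until this estimate is supplied. To supply it one would argue that the difference quotients $\{(g(t)-g(s))/|t-s|\;|\;t\neq s\in[a,b]^{d}\}$ are weakly bounded (mean value theorem applied to $e'\circ g$, whose gradient is continuous on the compact cube), hence bounded by Mackey's theorem, so that $g(t)-g(s)\in|t-s|B$ for one fixed bounded disk $B$; then, decomposing $h(t)-h(s)=(g(t)-g(s))\prod_{k}\gamma_{k}'(t_{k})+g(s)\bigl(\prod_{k}\gamma_{k}'(t_{k})-\prod_{k}\gamma_{k}'(s_{k})\bigr)$, a refinement of mesh $\delta$ changes a Riemann sum by an element of $\bigl(C_{1}\delta+\omega(\delta)\bigr)B'$ for a fixed bounded disk $B'$ and a modulus of continuity $\omega$ of $\prod_{k}\gamma_{k}'$. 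Note that $h$ itself is \emph{not} weakly $\mathcal{C}^{1}$ (the $\gamma_{k}'$ are merely continuous), so ``absorbing derivative data'' must go through this product decomposition and yields only an $o(1)$ rather than an $O(\delta)$ bound --- which still suffices for the Cauchy property in the gauge of a closed disk containing $B'$ --- and one must finally pass to the limit in the scalar Riemann sums to verify the Pettis identity. Until these details are written out, your part a) is a plan rather than a proof, whereas the paper's duality argument reaches the conclusion in a few lines by outsourcing the use of local completeness to the compactness of $\oacx\bigl(f(\gamma([a,b]^{d}))\bigr)$.
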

\begin{proof}
a) As $f$ is weakly $\gamma$-$\mathcal{C}^{1}$, there is some open set $X\subset\R^{d}$ 
such that $[a,b]^{d}\subset X$ and $\gamma$ can be extended to a $\mathcal{C}^{1}$-function $\widetilde{\gamma}$ on $X$ with 
$\widetilde{\gamma}(X)\subset\Omega$ so that $f\circ\widetilde{\gamma}$ is weakly $\mathcal{C}^{1}$ on $X$. We observe that
\[
|I_{f}(e')|:=\bigl|\int_{[a,b]^{d}}\langle e',f(\gamma(t))\rangle\prod_{k=1}^{d}\gamma_{k}'(t_{k})\d t\bigr|
\leq l(\gamma)\sup_{x\in f(\gamma([a,b]^{d}))}|e'(x)|,\quad e'\in E'.
%&  \leq |a-b|^{d}\bigl( \prod_{k=1}^{d}\sup_{t_{k}\in [a,b]}|\gamma_{k}'(t_{k})|\bigr)\sup_{x\in f(\gamma([a,b]^{d}))}|e'(x)|,
%\quad e'\in E'
\]
The closure of the absolutely convex hull $\oacx{f(\gamma([a,b]^{d}))}=\oacx{f(\widetilde{\gamma}([a,b]^{d}))}$ 
of $f(\gamma([a,b]^{d}))$ is compact by \cite[Proposition 2, p.\ 354]{Bonet2002} 
since $f\circ\widetilde{\gamma}$ is weakly $\mathcal{C}^{1}$ on $X$. 
Hence it follows that $I_{f}\in (E_{\kappa}')'$ 
and we deduce from the Mackey-Arens theorem that there is $e(f\circ\gamma)\in E$ such that 
\[
 \langle e', e(f\circ\gamma)\rangle=I_{f}(e')
=\int_{[a,b]^{d}}\langle e',f(\gamma(t))\rangle\prod_{k=1}^{d}\gamma_{k}'(t_{k})\d t,\quad e'\in E',
\]
implying the integrability of $f$ along $\gamma$. 

b) Indeed, writing 
\[
e'\circ\bigl(f\circ\widetilde{\gamma}\bigr)=\bigl(e'\circ(f\circ\phi^{-1})\bigr)\circ(\phi\circ\widetilde{\gamma}),\quad e'\in E',
\]
for a $\mathcal{C}^{1}$-extension $\widetilde{\gamma}$ of $\gamma$ on $X$, 
we see that $f\circ\widetilde{\gamma}$ is weakly $\mathcal{C}^{1}$ 
on $X$ by the scalar version of the chain rule.

c) We just have to notice 
that $f$ is weakly holomorphic by \prettyref{rem:properties.of.holom.map} d), implying that 
$e'\circ(f\circ\phi^{-1})\in\mathcal{C}^{\infty}(\phi(\Omega))$ for all $e'\in E'$, 
which proves the claim by part b). 
\end{proof}

Next, we prove Fubini's theorem, which facilitates the computation of an integral along a curve. 
We recall the following lemma whose proof 
is similar to the one of \prettyref{prop:path.integral} a).

\begin{lem}[{\cite[4.7 Lemma, p.\ 14]{kruse2018_1}}]\label{lem:pettis.loc.complete}
Let $E$ be a locally complete lcHs, $\Omega\subset\R^{d}$ open and $f\colon\Omega\to E$.
If $f$ is weakly $\mathcal{C}^{1}$, then $f$ is Pettis-integrable (w.r.t.\ to the Lebesgue measure) 
on every compact subset of $K\subset\Omega$. 
\end{lem}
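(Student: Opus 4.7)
The plan is to mirror the argument of \prettyref{prop:path.integral} a), using the Lebesgue integral on the compact set $K$ in place of the curve integral. First I would note that the weak $\mathcal{C}^{1}$ hypothesis makes each $e'\circ f$ an element of $\mathcal{C}^{1}(\Omega)$, hence continuous and bounded on the compact set $K$, and therefore Lebesgue-integrable there. This makes the linear functional
\[
I_{f}\colon E' \to \K, \quad I_{f}(e') := \int_{K} \langle e', f(x)\rangle\,\d x,
\]
well-defined, and the problem reduces to producing an element $e_{K}(f)\in E$ representing $I_{f}$ in the sense of \prettyref{def:integral}.

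The fundamental estimate is $|I_{f}(e')|\le \lambda(K)\sup_{y\in f(K)}|e'(y)|$ for every $e'\in E'$. By \cite[Proposition 2, p.\ 354]{Bonet2002}, combined with local completeness of $E$ and the weak $\mathcal{C}^{1}$ hypothesis, the set $\oacx(f(K))$ is absolutely convex and compact in $E$ (if the cited statement is formulated for images of a compact cube, one chooses $[a,b]^{d}\subset\Omega$ containing $K$ and uses that $\oacx(f(K))$ is a closed subset of $\oacx(f([a,b]^{d}))$). The estimate above then shows that $I_{f}$ is uniformly bounded by $\lambda(K)$ on the polar of $\oacx(f(K))$, so $I_{f}$ is continuous on $E_{\kappa}'$, i.e.\ $I_{f}\in(E_{\kappa}')'$. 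The Mackey-Arens theorem identifies $(E_{\kappa}')'$ with $E$, producing the required $e_{K}(f)\in E$ with $\langle e',e_{K}(f)\rangle=I_{f}(e')$ for all $e'\in E'$, which is exactly the Pettis-integrability of $f$ on $K$.

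The main obstacle is the compactness of $\oacx(f(K))$: weak $\mathcal{C}^{1}$ alone does not make $f$ continuous for the original topology of $E$, so one cannot simply cite compactness of $f(K)$ and invoke a convex compactness property (which is not available under mere local completeness). The Bonet-Frerick-Jord\'a result is exactly what bridges this gap by leveraging the differentiability structure together with local completeness; with that input in hand, the rest is a routine Mackey-Arens duality argument, and no further properties of $K$ beyond compactness are needed.
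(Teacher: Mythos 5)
Your proposal is correct and follows essentially the same route as the paper, which does not prove the lemma directly but states that its proof is similar to that of \prettyref{prop:path.integral} a): the bound $|I_{f}(e')|\leq\lambda(K)\sup_{y\in f(K)}|e'(y)|$, compactness of $\oacx(f(K))$ via \cite[Proposition 2, p.\ 354]{Bonet2002} from the weak $\mathcal{C}^{1}$ hypothesis and local completeness, and then the Mackey--Arens theorem to realise $I_{f}$ as an element of $E$. The only quibble is your parenthetical fallback to a cube $[a,b]^{d}\subset\Omega$ containing $K$, which need not exist for a general compact $K\subset\Omega$; but the cited proposition applies to arbitrary compact subsets (or one covers $K$ by finitely many cubes), so nothing essential is affected.
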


\begin{thm}[{Fubini's theorem}]\label{thm:fubini}
 Let $E$ be a locally complete lcHs, $\Omega\subset\R^{2}$ open, $[a,b]\times [c,d]\subset\Omega$ 
 and $f\colon\Omega\to E$ weakly $\mathcal{C}^{1}$. Then $f$ is Pettis-integrable on $[a,b]\times [c,d]$ and
 \[
  \int_{[a,b]\times[c,d]}f(x_{1},x_{2})\d (x_{1},x_{2})= \int_{[c,d]}\int_{[a,b]}f(x_{1},x_{2})\d x_{1}\d x_{2}
  =\int_{[a,b]}\int_{[c,d]}f(x_{1},x_{2})\d x_{2}\d x_{1}.
 \]
\end{thm}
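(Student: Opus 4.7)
\smallskip
\noindent\textbf{Proof plan.}
The Pettis-integrability of $f$ on $[a,b]\times[c,d]$ is immediate from \prettyref{lem:pettis.loc.complete}, since $[a,b]\times[c,d]$ is a compact subset of $\Omega$ and $f$ is weakly $\mathcal{C}^{1}$. Call this joint integral $I\in E$. The task is therefore to produce the two iterated Pettis-integrals in $E$ and identify them with $I$.

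The plan is to construct the inner integrals one variable at a time. For fixed $x_{2}\in[c,d]$, the map $x_{1}\mapsto f(x_{1},x_{2})$ is defined on an open neighbourhood of $[a,b]$ in $\R$, and it is weakly $\mathcal{C}^{1}$ there because for every $e'\in E'$ the composition $e'\circ f$ lies in $\mathcal{C}^{1}(\Omega)$ and restriction to an affine line preserves this regularity. Hence \prettyref{lem:pettis.loc.complete} applied in one variable yields an element
\[
F(x_{2}):=\int_{[a,b]}f(x_{1},x_{2})\,\d x_{1}\in E,
\]
characterised by $\langle e',F(x_{2})\rangle=\int_{a}^{b}(e'\circ f)(x_{1},x_{2})\,\d x_{1}$ for all $e'\in E'$. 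I would then observe that the scalar Leibniz rule (differentiation under the integral sign for continuous integrands with continuous partial derivatives on a compact interval) shows that $e'\circ F$ is of class $\mathcal{C}^{1}$ on an open neighbourhood of $[c,d]$ for every $e'\in E'$. Thus $F$ is weakly $\mathcal{C}^{1}$, and a second application of \prettyref{lem:pettis.loc.complete} produces the iterated integral $J:=\int_{[c,d]}F(x_{2})\,\d x_{2}\in E$.

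To identify $J$ with $I$, I would test with $e'\in E'$ and chain the defining properties of the Pettis-integral with the classical Fubini theorem applied to the continuous scalar function $e'\circ f$ on $[a,b]\times[c,d]$:
\[
\langle e',J\rangle=\int_{[c,d]}\int_{[a,b]}(e'\circ f)(x_{1},x_{2})\,\d x_{1}\,\d x_{2}
=\int_{[a,b]\times[c,d]}(e'\circ f)(x_{1},x_{2})\,\d(x_{1},x_{2})=\langle e',I\rangle.
\]
Since $E$ is Hausdorff, $E'$ separates points, so $J=I$. The second iterated integral is handled by the symmetric argument, swapping the roles of the two variables.

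The main obstacle is psychological rather than technical: one has to be careful that the two inner integrals genuinely land in $E$ (not merely in the completion) and that they themselves satisfy the hypothesis of \prettyref{lem:pettis.loc.complete}. The weak $\mathcal{C}^{1}$ property of $F$ ultimately rests on classical differentiation under the scalar integral, which applies because $e'\circ f$ and its first partials are continuous on the compact rectangle; once this regularity has been transferred through Pettis duality, local completeness does the rest.
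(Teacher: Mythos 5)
Your proposal is correct and follows essentially the same route as the paper's proof: Pettis-integrability of $f$ and of the slicewise integral $F(x_{2})=\int_{[a,b]}f(x_{1},x_{2})\,\d x_{1}$ both via \prettyref{lem:pettis.loc.complete}, the weak $\mathcal{C}^{1}$ regularity of $F$ via the scalar Leibniz rule on a slightly enlarged open rectangle, and the identification of the iterated with the joint integral by testing against $e'\in E'$, applying scalar Fubini and invoking Hahn--Banach separation. The only point the paper makes more explicit is the choice of $\widetilde{a}<a$, $b<\widetilde{b}$, $\widetilde{c}<c$, $d<\widetilde{d}$ with $[\widetilde{a},\widetilde{b}\,]\times[\widetilde{c},\widetilde{d}\,]\subset\Omega$, which you use implicitly when speaking of open neighbourhoods of $[a,b]$ and $[c,d]$.
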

\begin{proof}
The function $f$ is Pettis-integrable on $[a,b]\times [c,d]$ by \prettyref{lem:pettis.loc.complete}. 
Since $\Omega$ is open, there are $\widetilde{a}<a$, $b<\widetilde{b}$, $\widetilde{c}<c$ and $d<\widetilde{d}$ such that 
$[\widetilde{a},\widetilde{b}\,]\times[\widetilde{c},\widetilde{d}\,]\subset \Omega$. Further, we observe that
\[
 F\colon (\widetilde{c},\widetilde{d}\,)\to E,\;F(x_{2}):=\int_{[a,b]}f(x_{1},x_{2})\d x_{1},
\]
is well-defined by \prettyref{lem:pettis.loc.complete} since $f(\cdot,x_{2})$ 
is weakly $\mathcal{C}^{1}$ on $(\widetilde{a},\widetilde{b}\,)$ 
for every $x_{2}\in(\widetilde{c},\widetilde{d}\,)$.
We claim that $F$ is weakly $\mathcal{C}^{1}$ on $(\widetilde{c},\widetilde{d}\,)$. 
Indeed, we have $(e'\circ f)(\cdot,x_{2})\in \mathcal{L}^{1}([a,b])$ 
and $(e'\circ f)(x_{1},\cdot)\in\mathcal{C}^{1}((\widetilde{c},\widetilde{d}\,))$ as well as 
\begin{equation}\label{eq:fubini.1}
 (e'\circ F)(x_{2})=\int_{[a,b]}(e'\circ f)(x_{1},x_{2})\d x_{1}
\end{equation}
for every $x_{1}\in[a,b]$, $x_{2}\in (\widetilde{c},\widetilde{d}\,)$ and $e'\in E'$. 
Furthermore, for every $x_{2}\in(\widetilde{c},\widetilde{d}\,)$ there is $\varepsilon>0$ with 
$\overline{\mathbb{B}_{\varepsilon}(x_{2})}=[x_{2}-\varepsilon,x_{2}+\varepsilon]\subset(\widetilde{c},\widetilde{d}\,)$ and 
\[
C_{e'}:=\sup\bigl(|\partial_{x_{2}}(e'\circ f)(x_{1},\widetilde{x_{2}})|\;|\;
                  (x_{1},\widetilde{x_{2}})\in[a,b]\times\overline{\mathbb{B}_{\varepsilon}(x_{2})}\bigr)
<\infty,\quad e'\in E',
\]
because $e'\circ f\in\mathcal{C}^{1}(\Omega)$ for every $e'\in E'$. 
It follows from the scalar Leibniz rule for differentiation under the integral sign 
and the continuous dependency of a scalar integral on a parameter (see \cite[5.6, 5.7 Satz, p.\ 147-148]{elstrodt2005}) 
that $e'\circ F\in\mathcal{C}^{1}(\mathbb{B}_{\varepsilon}(x_{2}))$ for every $e'\in E'$. 
As $x_{2}\in(\widetilde{c},\widetilde{d}\,)$ is arbitrary, we 
get that $F$ is weakly $\mathcal{C}^{1}$ on $(\widetilde{c},\widetilde{d}\,)$. 
Due to \prettyref{lem:pettis.loc.complete} again, we deduce that 
$F$ is Pettis-integrable on $[c,d]$ and thus 
\begin{equation}\label{eq:fubini.2}
 \langle e', \int_{[c,d]} F(x_{2})\d x_{2}\rangle=\int_{[c,d]}\langle e',F(x_{2})\rangle\d x_{2}, \quad e'\in E'.
\end{equation}
Therefore we obtain for every $e'\in E'$ that
\begin{flalign*}
 &\quad\langle e', \int_{[a,b]\times[c,d]}f(x_{1},x_{2})\d (x_{1},x_{2})\rangle\\
 &= \int_{[a,b]\times[c,d]}\langle e',f(x_{1},x_{2})\rangle\d (x_{1},x_{2})
 = \int_{[c,d]}\int_{[a,b]}\langle e',f(x_{1},x_{2})\rangle \d x_{1}\d x_{2}\\
 &\;\;\mathclap{\underset{\eqref{eq:fubini.1}}{=}}\;\;\; \int_{[c,d]}\langle e',F(x_{2})\rangle \d x_{2}
 \underset{\eqref{eq:fubini.2}}{=} \langle e', \int_{[c,d]} F(x_{2})\d x_{2} \rangle 
 = \langle e', \int_{[c,d]}\int_{[a,b]}f(x_{1},x_{2})\d x_{1}\d x_{2}\rangle
\end{flalign*}
where we used the scalar version of Fubini's theorem in the second equation. The Hahn-Banach theorem yields the first equation 
from our claim and analogously we get the second equation
 \[
  \int_{[a,b]\times[c,d]}f(x_{1},x_{2})\d (x_{1},x_{2})
  =\int_{[a,b]}\int_{[c,d]}f(x_{1},x_{2})\d x_{2}\d x_{1}.
 \]
\end{proof}

Fubini's theorem for a continuous function $f\colon\Omega\subset\R^2\to E$ 
can also be found in \cite[Chap. 3, \S4.1, Remark, p.\ INT III.43]{bourbakiI} 
by Bourbaki under the restriction that $\oacx{(f([a,b]\times [c,d]))}$ is compact in $E$. 
From the condition that $f\colon\Omega\to E$ is weakly $\mathcal{C}^{1}$ follows that $f$ is continuous if $E$ is sequentially complete 
or more general if $E$ has metric ccp by \cite[6.4 Corollary, p.\ 19]{kruse2019_3}. 
Thus in this case one can also apply Bourbaki's version of Fubini's theorem.

\begin{rem}
Let $E$ be a locally complete lcHs over $\C$, $\Omega\subset\C^{d}$ open and $\gamma$ a $\mathcal{C}^{1}$-curve in $\Omega$. 
If $f\colon\Omega\to E$ is weakly $\gamma$-$\mathcal{C}^{1}$, then 
\[
\int_{\gamma}f(z)\d z=\int_{[a,b]}\cdots\int_{[a,b]}f(\gamma(t))\prod_{k=1}^{d}\gamma_{k}'(t_{k})\d t_{d}\cdots\d t_{1} 
\]
by Fubini's theorem.
\end{rem}

\begin{prop}[{chain rule}]\label{prop:chain.rule}
Let $E$ be an lcHs over $\C$, $\Omega\subset\C^{d}$ open, 
$\gamma\colon[a,b]^{d}\to\C^{d}$ a $\mathcal{C}^{1}$-curve in $\Omega$ and $F\colon\Omega\to E$
holomorphic. Then for every $1\leq j\leq d$
\begin{equation}\label{eq:chain.rule.1}
(\partial^{e_{j}}_{\R})^{\widehat{E}}\bigl((F\circ\phi^{-1})\circ (\phi\circ\gamma)\bigr)(t)
=(\partial^{e_{j}}_{\C})^{\widehat{E}}F(\gamma(t))\gamma_{j}'(t_{j}),\quad t\in [a,b]^{d}.
\end{equation}
% If $\partial^{e_{j}}_{\C})^{\widehat{E}}F(\gamma(t))\in E$ for some $j$ and $t$, then 
% \begin{equation}\label{eq:chain.rule.1}
% (\partial^{e_{j}}_{\R})^{E}\bigl((F\circ\phi^{-1})\circ (\phi\circ\gamma)\bigr)(t)
% =(\partial^{e_{j}}_{\C})^{E}F(\gamma(t))\gamma_{j}'(t_{j}),\quad t\in [a,b]^{d}.
%\end{equation}
\end{prop}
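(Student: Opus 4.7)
The plan is to reduce \eqref{eq:chain.rule.1} to a one-variable computation in which only the $j$-th coordinate varies, and then to invoke the definition of complex differentiability of $F$ at $z:=\gamma(t)$. Unwinding $\phi$, the left-hand side of \eqref{eq:chain.rule.1} is, by definition of $(\partial^{e_{j}}_{\R})^{\widehat{E}}$, the ordinary derivative at $s=0$ of the one-variable function $s\mapsto F(z+h(s)e_{j})$ valued in $\widehat{E}$, where $h(s):=\gamma_{j}(t_{j}+s)-\gamma_{j}(t_{j})$ and $e_{j}$ now denotes the $j$-th unit vector of $\C^{d}$; here $h$ is $\mathcal{C}^{1}$ near $0$ with $h(0)=0$ and $h'(0)=\gamma_{j}'(t_{j})$, and $z+h(s)e_{j}\in\Omega$ for $s$ sufficiently close to $0$ because $\gamma$ is a $\mathcal{C}^{1}$-curve in $\Omega$.

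For $s\neq 0$ with $h(s)\neq 0$, I would use the identity $d_{\C}F(z)[h(s)e_{j}]=h(s)(\partial^{e_{j}}_{\C})^{\widehat{E}}F(z)$ supplied by \prettyref{rem:properties.of.holom.map} c) and split the difference quotient as
\[
\frac{F(z+h(s)e_{j})-F(z)}{s}=\frac{F(z+h(s)e_{j})-F(z)-d_{\C}F(z)[h(s)e_{j}]}{|h(s)|}\cdot\frac{|h(s)|}{s}+\frac{h(s)}{s}\,(\partial^{e_{j}}_{\C})^{\widehat{E}}F(z).
\]
The first factor of the first summand tends to $0$ in $\widehat{E}$ in every seminorm $p_{\alpha}$ by Definition \ref{def:holom} a), and $|h(s)|/s$ stays bounded near $0$ (with limiting absolute value $|\gamma_{j}'(t_{j})|$), so the first summand vanishes in the limit $s\to 0$. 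The second summand converges to $\gamma_{j}'(t_{j})(\partial^{e_{j}}_{\C})^{\widehat{E}}F(z)$ since $h(s)/s\to\gamma_{j}'(t_{j})$, which together produces exactly the right-hand side of \eqref{eq:chain.rule.1}.

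The main (minor) obstacle is that the decomposition above requires $h(s)\neq 0$. If $\gamma_{j}'(t_{j})\neq 0$ this is automatic for $s$ in some punctured neighbourhood of $0$; otherwise, on any sequence $s_{n}\to 0$ with $h(s_{n})=0$ the difference quotients are simply $0$, which matches the asserted limit $\gamma_{j}'(t_{j})(\partial^{e_{j}}_{\C})^{\widehat{E}}F(z)=0$, so a trivial case distinction disposes of the ambiguity.
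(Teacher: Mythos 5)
Your proof is correct, but it takes a genuinely different and more elementary route than the paper. The paper verifies that $F\circ\phi^{-1}$ and $\phi\circ\widetilde{\gamma}$ are of class $\mathcal{C}^{1}_{k}$ in Keller's sense -- i.e.\ it proves continuity of $D_{\R}(F\circ\phi^{-1})\colon\phi(\Omega)\to L_{c}(\R^{2d},\widehat{E})$ and of $D_{\R}(\phi\circ\widetilde{\gamma})\colon X\to L_{b}(\R^{d},\R^{2d})$ -- and then invokes the general chain rule from Keller's differential calculus to obtain the full identity $D_{\R}\bigl((F\circ\phi^{-1})\circ(\phi\circ\widetilde{\gamma})\bigr)(x)[v]=D_{\R}(F\circ\phi^{-1})(\phi\circ\widetilde{\gamma}(x))[D_{\R}(\phi\circ\widetilde{\gamma})(x)[v]]$ for all directions $v\in\R^{d}$, specialising to $v=e_{j}$ only at the end. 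You instead exploit the product structure $\gamma(t)=(\gamma_{1}(t_{1}),\ldots,\gamma_{d}(t_{d}))$, which makes $t\mapsto\gamma(t)$ vary only in the $j$-th coordinate when $t$ moves in direction $e_{j}$, so that the claim collapses to a one-variable difference quotient $s\mapsto\bigl(F(z+h(s)e_{j})-F(z)\bigr)/s$ with $z=\gamma(t)$ and $h(s)=\gamma_{j}(t_{j}+s)-\gamma_{j}(t_{j})$; splitting off $d_{\C}F(z)[h(s)e_{j}]=h(s)(\partial^{e_{j}}_{\C})^{\widehat{E}}F(z)$ and using only \prettyref{def:holom} a) and \prettyref{rem:properties.of.holom.map} c) then gives the result, with a correct and necessary case distinction for the degenerate situation $h(s)=0$. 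The paper's approach buys a stronger, reusable statement (the chain rule in every direction together with continuity of the composite differential into $L_{c}(\R^{d},\widehat{E})$), at the price of importing Keller's $\mathcal{C}^{1}_{k}$ machinery; your approach is self-contained and shorter, and it delivers exactly the pointwise coordinate-direction identity that the proposition asserts and that is actually used afterwards (e.g.\ in \prettyref{thm:hauptsatz}). One small point to make explicit when writing it up: the parameter $s$ must range over a full neighbourhood of $0$ in $\R$, which is legitimate because the $\mathcal{C}^{1}$-curve $\gamma$ extends to $\widetilde{\gamma}$ on an open set $X\supset[a,b]^{d}$ with $\widetilde{\gamma}(X)\subset\Omega$; this is what justifies $z+h(s)e_{j}\in\Omega$ even when $t$ lies on the boundary of $[a,b]^{d}$.
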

\begin{proof}
Due to \prettyref{rem:properties.of.holom.map} a)-c) and 
\prettyref{rem:compl-diff.is.real-diff} the map $F\circ\phi^{-1}\colon\phi(\Omega)\to E$ 
is continuous, the map $D_{\R}(F\circ\phi^{-1})(x)\colon\R^{2d}\to\widehat{E}$ 
is $\R$-linear and the map $D_{\R}(F\circ\phi^{-1})\colon
\phi(\Omega)\times\R^{2d}\to\widehat{E}$ is continuous. 
The set $\phi(\Omega)$ is open, thus for every $x\in\phi(\Omega)$ there is $R>0$ such that 
$\overline{\mathbb{B}_{R}(x)}\subset\phi(\Omega)$. Hence $D_{\R}(F\circ\phi^{-1})$ is uniformly 
continuous on the compact set $\overline{\mathbb{B}_{R}(x)}\times K$ for any compact set $K\subset\R^{2d}$. 
Let $(\widehat{E},(p_{\alpha})_{\alpha\in\widehat{\mathfrak{A}}})$ denote the completion of $E$. 
It follows that for every $\alpha\in\widehat{\mathfrak{A}}$ and $\varepsilon>0$ there is $\delta>0$ 
such that for all $y\in\overline{\mathbb{B}_{R}(x)}$ and $v\in K$ with $|y-x|=|(y,v)-(x,v)|<\delta$ 
we have
\[
\sup_{v\in K}p_{\alpha}(D_{\R}(F\circ\phi^{-1})(y)[v]-D_{\R}(F\circ\phi^{-1})(x)[v])<\varepsilon,
\]
implying that 
\[
D_{\R}(F\circ\phi^{-1})\colon\phi(\Omega)\to L_{c}(\R^{2d},\widehat{E})
\]
is continuous. 
Since $\gamma$ is a $\mathcal{C}^{1}$-curve in $\Omega$, there are an open set $X\subset\R^{d}$ with $[a,b]^{d}\subset X$
and a continuously partially differentiable extension $\widetilde{\gamma}$ of $\gamma$ on $X$ such that 
$\phi\circ\widetilde{\gamma}\colon X\to\R^{2d}$ is continuous, 
$D_{\R}(\phi\circ\widetilde{\gamma})\colon X\to L(\R^{d},\R^{2d})$ and 
by direct computation
\[
 D_{\R}(\phi\circ\widetilde{\gamma})(x)[v]=\sum_{k=1}^{d}\bigl((\operatorname{Re}\widetilde{\gamma}_{k})'(x_{k})e_{2k-1}
 +(\operatorname{Im}\widetilde{\gamma}_{k})'(x_{k})e_{2k}\bigr)v_{k},\quad x\in X,\, v\in \R^{d}.
\]
For $x,y\in X$ we set 
\[
 u_{k}:=(\operatorname{Re}\widetilde{\gamma}_{k})'(y_{k})-(\operatorname{Re}\widetilde{\gamma}_{k})'(x_{k})\quad\text{and}\quad
 w_{k}:=(\operatorname{Im}\widetilde{\gamma}_{k})'(y_{k})-(\operatorname{Im}\widetilde{\gamma}_{k})'(x_{k})
\]
and observe for $v\in\R^{d}$ that
\begin{flalign*} 
&\quad |D_{\R}(\phi\circ\widetilde{\gamma})(y)[v]-D_{\R}(\phi\circ\widetilde{\gamma})(x)[v]|\\
&=|\sum_{k=1}^{d}(u_{k}e_{2k-1}+w_{k}e_{2k})v_{k}|
=\bigl(\sum_{k=1}^{d}(u_{k}^{2}+w_{k}^{2})v_{k}^{2}\bigr)^{1/2}
\leq \sum_{k=1}^{d}(u_{k}^{2}+w_{k}^{2})^{1/2}|v_{k}|\\
&\leq |\sum_{k=1}^{d}(u_{k}^{2}+w_{k}^{2})^{1/2}e_{k}|\cdot|v|
%=\bigl(\sum_{k=1}^{d}u_{k}^{2}+w_{k}^{2}\bigr)^{1/2}\cdot|v|
\leq\sum_{k=1}^{d}|\widetilde{\gamma}_{k}'(y_{k})-\widetilde{\gamma}_{k}'(x_{k})|\cdot|v|
\end{flalign*}
where the second inequality follows from the Cauchy-Schwarz inequality. 
This implies that $D_{\R}(\phi\circ\widetilde{\gamma})\colon X\to L_{b}(\R^{d},\R^{2d})$ is continuous 
because $\gamma$ is continuously partially differentiable. 
Summarising, this means that $F\circ\phi^{-1}$ and $\phi\circ\widetilde{\gamma}$ are of class $\mathcal{C}^{1}_{k}$ in the 
notion of \cite[1.0.0 Definition, p.\ 59]{keller1974}.
From (the proof of) \cite[1.3.4 Corollary, p.\ 80]{keller1974} follows that 
\[
D_{\R}(F\circ\phi^{-1})(\phi\circ\widetilde{\gamma}(\cdot))[D_{\R}(\phi\circ\widetilde{\gamma})(\cdot)] 
\colon X\to L_{c}(\R^{d},\widehat{E})
\]
is continuous and thus 
\[
D_{\R}\bigl((F\circ\phi^{-1})\circ (\phi\circ\widetilde{\gamma})\bigr)(x)[v]=
D_{\R}(F\circ\phi^{-1})(\phi\circ\widetilde{\gamma}(x))[D_{\R}(\phi\circ\widetilde{\gamma})(x)[v]],
\;\;x\in X,\,v\in\R^{d},
\]
by the chain rule \cite[1.3.0 Theorem, p.\ 77]{keller1974}. 
In combination with \prettyref{rem:properties.of.holom.map} c) we obtain for every $x\in X$ and 
$v\in\R^{d}$ that
\begin{flalign*}
&\quad\;\sum_{k=1}^{d}(\partial^{e_{k}}_{\R})^{\widehat{E}}\bigl((F\circ\phi^{-1})\circ (\phi\circ\widetilde{\gamma})\bigr)(x)v_{k}\\
&=D_{\R}\bigl((F\circ\phi^{-1})\circ (\phi\circ\widetilde{\gamma})\bigr)(x)[v]
\;\;\mathclap{\underset{\eqref{eq:compl-diff.is.real-diff.1}}{=}}\;\;
 D_{\C}F\bigl((\phi^{-1}\circ\phi\circ\widetilde{\gamma})(x)\bigr)[\phi^{-1}(D_{\R}(\phi\circ\widetilde{\gamma})(x)[v])]\\
&= D_{\C}F(\widetilde{\gamma}(x))[\sum_{k=1}^{d}\widetilde{\gamma}_{k}'(x_{k})e_{k}v_{k}]
 =\sum_{k=1}^{d}(\partial^{e_{k}}_{\C})^{\widehat{E}}F(\widetilde{\gamma}(x))\widetilde{\gamma}_{k}'(x_{k})v_{k}
\end{flalign*} 
and thus with $v=e_{j}$, $1\leq j\leq d$,
\[
(\partial^{e_{j}}_{\R})^{\widehat{E}}\bigl((F\circ\phi^{-1})\circ (\phi\circ\widetilde{\gamma})\bigr)(x)
=(\partial^{e_{j}}_{\C})^{\widehat{E}}F(\widetilde{\gamma}(x))\widetilde{\gamma}_{j}'(x_{j}),
\]
connoting \eqref{eq:chain.rule.1} for $x\in[a,b]^{d}$.
\end{proof}

\begin{thm}[{fundamental theorem of calculus}]\label{thm:hauptsatz}
 Let $E$ be a locally complete lcHs over $\C$, $\Omega\subset\C$ open, 
 $\gamma\colon[a,b]\to \C$ a $\mathcal{C}^{1}$-curve in $\Omega$, $f\colon\Omega\to E$ weakly $\gamma$-$\mathcal{C}^{1}$ and let 
 there be a holomorphic function $F\colon\Omega\to E$ such that $F'=f$. Then 
 \begin{equation}\label{eq:hauptsatz.1}
  \int_{\gamma}f(z)\d z=F(\gamma(b))-F(\gamma(a)).
 \end{equation}
\end{thm}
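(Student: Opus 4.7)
My plan is to reduce everything to the scalar fundamental theorem of calculus by testing against continuous linear functionals, exactly as is done in the proof of Proposition \ref{prop:path.integral}.

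First, I would observe that the integral on the left-hand side exists in $E$ by \prettyref{prop:path.integral} a), since $f$ is assumed to be weakly $\gamma$-$\mathcal{C}^{1}$. Hence by the defining property of the Pettis-integral,
\[
\langle e',\int_{\gamma}f(z)\,\d z\rangle=\int_{a}^{b}\langle e',f(\gamma(t))\rangle\gamma'(t)\,\d t
\]
for every $e'\in E'$. Next, I would fix $e'\in E'$ and study the scalar function $e'\circ F\circ\gamma\colon[a,b]\to\C$. Since $F$ is holomorphic on $\Omega$, the composition $e'\circ F$ is scalar-valued holomorphic, and since $\gamma$ extends to a $\mathcal{C}^{1}$-function $\widetilde{\gamma}$ on some open $X\supset[a,b]$ with $\widetilde{\gamma}(X)\subset\Omega$, the scalar chain rule yields
\[
(e'\circ F\circ\gamma)'(t)=(e'\circ F)'(\gamma(t))\gamma'(t)=\langle e',F'(\gamma(t))\rangle\gamma'(t)=\langle e',f(\gamma(t))\rangle\gamma'(t)
\]
on $[a,b]$, where in the second equality I use that differentiation and application of $e'\in E'=(\widehat{E})'$ commute (\prettyref{rem:properties.of.holom.map} d)) and $F'=f$ by hypothesis.

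Applying the scalar fundamental theorem of calculus to the continuously differentiable scalar function $e'\circ F\circ\gamma$ on $[a,b]$ then gives
\[
\int_{a}^{b}\langle e',f(\gamma(t))\rangle\gamma'(t)\,\d t=(e'\circ F\circ\gamma)(b)-(e'\circ F\circ\gamma)(a)=\langle e',F(\gamma(b))-F(\gamma(a))\rangle.
\]
Combining with the first display shows that $\langle e',\int_{\gamma}f(z)\,\d z\rangle=\langle e',F(\gamma(b))-F(\gamma(a))\rangle$ for every $e'\in E'$. Finally, since $E$ is Hausdorff, $E'$ separates points of $E$, so the Hahn-Banach theorem yields \eqref{eq:hauptsatz.1}.

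There is no real obstacle: the whole point of having introduced the Pettis-integral and verified existence of $\int_\gamma f\,\d z$ in \prettyref{prop:path.integral} is precisely so that this kind of scalarisation argument works. The only minor point worth double-checking is that $e'\circ F\circ\gamma$ is truly $\mathcal{C}^{1}$ on an open neighbourhood of $[a,b]$ so that the scalar FTOC applies without boundary issues; this is immediate from the existence of the $\mathcal{C}^{1}$-extension $\widetilde{\gamma}$ together with the holomorphy (and hence $\mathcal{C}^{1}$-regularity on $\phi(\Omega)$ via \prettyref{rem:compl-diff.is.real-diff}) of $e'\circ F$.
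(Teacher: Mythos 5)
Your proof is correct and rests on the same overall strategy as the paper's — existence of the Pettis-integral via \prettyref{prop:path.integral} a), reduction to the scalar fundamental theorem of calculus, and a final Hahn--Banach separation argument — but the two arguments differ in where the chain rule enters. The paper first rewrites the $E$-valued integrand $F'(\gamma(t))\gamma'(t)$ as the derivative of the composition $(F\circ\phi^{-1})\circ(\phi\circ\gamma)$ by means of the vector-valued chain rule \prettyref{prop:chain.rule} (whose proof goes through Keller's differential calculus) together with \prettyref{rem:properties.of.holom.map} e), and only afterwards pairs with $e'\in E'$ and invokes the scalar fundamental theorem. You pair with $e'$ immediately, using only the defining property of the Pettis-integral, and then need nothing beyond the classical chain rule for the scalar holomorphic function $e'\circ F$ composed with the $\mathcal{C}^{1}$-path $\gamma$, the identity $(e'\circ F)'=e'\circ F'$ being supplied by \prettyref{rem:properties.of.holom.map} d)+e). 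This makes your argument a little more elementary and self-contained for this particular theorem, since it bypasses \prettyref{prop:chain.rule} entirely; what the paper's ordering buys is the vector-valued identity \eqref{eq:chain.rule.1} itself, established once at the level of $E$-valued (rather than scalarised) derivatives and reusable elsewhere. Your closing remark about checking that $e'\circ F\circ\gamma$ is $\mathcal{C}^{1}$ on a neighbourhood of $[a,b]$ is exactly the right point to verify, and it holds for the reason you give.
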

\begin{proof}
The left-hand side of \eqref{eq:hauptsatz.1} is defined by \prettyref{prop:path.integral} a). 
Due to the chain rule \prettyref{prop:chain.rule} and \prettyref{rem:properties.of.holom.map} e) we have
\begin{align*}
 \int_{\gamma}f(z)\d z&=\int_{[a,b]}f(\gamma(t))\gamma'(t)\d t
 =\int_{[a,b]}\underbrace{f(\gamma(t))}_{=F'(\gamma(t))}\gamma'(t)\d t\\
 &\;\;\mathclap{\underset{\eqref{eq:chain.rule.1}}{=}}\;\;\;\int_{[a,b]}
\bigl((F\circ\phi^{-1})\circ (\phi\circ\gamma)\bigr)'(t)\d t.
\end{align*}
Looking at the last integral, we observe that
\begin{flalign*}
&\quad\langle e',\int_{[a,b]}\bigl((F\circ\phi^{-1})\circ (\phi\circ\gamma)\bigr)'(t)\d t\rangle\\
 &=\int_{a}^{b}\bigl(e'\circ(F\circ\phi^{-1})\circ (\phi\circ\gamma)\bigr)'(t)\d t\\
 &=\bigl(e'\circ(F\circ\phi^{-1})\circ (\phi\circ\gamma)\bigr)(b)-
 \bigl(e'\circ(F\circ\phi^{-1})\circ (\phi\circ\gamma)\bigr)(a)\\
 &=\langle e',(F\circ\gamma)(b)- (F\circ\gamma)(a)\rangle,\quad e'\in E',
\end{flalign*}
holds by the scalar fundamental theorem of calculus (applied to the real and the imaginary part of the integrand) 
where the second integral is a Riemann-integral. Finally, we deduce from the Hahn-Banach theorem that
\[
\int_{\gamma}f(z)\d z=F(\gamma(b))-F(\gamma(a)).
\]
\end{proof}

\begin{lem}[{Leibniz' rule}]\label{lem:compl.diff.under.int}
Let $E$ be a locally complete lcHs over $\C$, $V,U\subset\C^{d}$ open 
and $\gamma\colon[a,b]^{d}\to\C^{d}$ a $\mathcal{C}^{1}$-curve in $V$. 
\begin{enumerate}
\item [a)] Let $T$ be a set and $f,f_{n}\colon V\times T\to E$
 such that $f(\cdot,t),f_{n}(\cdot,t)\colon V\to E$ are weakly 
$\gamma$-$\mathcal{C}^{1}$ for every $t\in T$, $n\in\N$ and $f_{n}\to f$ uniformly on 
$\gamma([a,b]^{d})\times T$. 
Then 
\[
\lim_{n\to\infty}\int_{\gamma}f_{n}(z,t)\d z=\int_{\gamma}f(z,t)\d z
\]
holds uniformly on $T$.
\item [b)] Let $f\colon V\times U\to E$ be such that 
$f(\cdot,\lambda)\colon V\to E$ is weakly $\gamma$-$\mathcal{C}^{1}$ for every $\lambda\in U$, 
$f(z,\cdot)\colon U\to E$ is holomorphic for every $z\in V$ 
with $(\partial_{\lambda_{j}})^{E}f\colon V\times U\to E$ being continuous 
and $(\partial_{\lambda_{j}})^{E}f(\cdot,\lambda)\colon V\to E$ weakly $\gamma$-$\mathcal{C}^{1}$ 
for every $\lambda\in U$ and some $1\leq j\leq d$. Then 
\[
G\colon U\to E,\;G(\lambda):=\int_{\gamma}f(z,\lambda)\d z,
\]
is well-defined, complex differentiable with respect to $\lambda_{j}$ and 
\[
(\partial^{e_{j}}_{\C})^{E}G(\lambda)=\int_{\gamma}(\partial_{\lambda_{j}})^{E}f(z,\lambda)\d z\in E,
\quad \lambda\in U.
\] 
\end{enumerate}
\end{lem}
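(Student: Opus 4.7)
The plan for (a) is to reuse the Pettis-integral estimate from the proof of \prettyref{prop:path.integral}a). For any continuous seminorm $p_{\alpha}$ on $E$, the bipolar theorem gives the representation $p_{\alpha}(x)=\sup_{e'\in U_{\alpha}^{\circ}}|e'(x)|$, where $U_{\alpha}$ is the $p_{\alpha}$-unit ball. For each $t\in T$ and $e'\in U_{\alpha}^{\circ}$, the defining property of the Pettis integral together with the length bound yields
\[
\Bigl|e'\Bigl(\int_{\gamma}f_{n}(z,t)\,\d z-\int_{\gamma}f(z,t)\,\d z\Bigr)\Bigr|
\leq l(\gamma)\sup_{z\in\gamma([a,b]^{d})}p_{\alpha}\bigl(f_{n}(z,t)-f(z,t)\bigr),
\]
and taking the supremum over $e'\in U_{\alpha}^{\circ}$ upgrades this to the same bound on $p_{\alpha}$ of the integral difference. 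The assumed uniform convergence of $f_{n}\to f$ on $\gamma([a,b]^{d})\times T$ then forces the curve integrals to converge uniformly in $t\in T$.

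For (b), well-definedness of $G$ and of the target value $\int_{\gamma}(\partial_{\lambda_{j}})^{E}f(z,\lambda)\,\d z$ are immediate consequences of \prettyref{prop:path.integral}a) applied to $f(\cdot,\lambda)$ and to $(\partial_{\lambda_{j}})^{E}f(\cdot,\lambda)$, respectively. Fix $\lambda\in U$, choose $r>0$ with $\overline{\mathbb{B}_{r}(\lambda)}\subset U$, and set, for $0<|h|<r$,
\[
\Delta_{h}(z):=\frac{f(z,\lambda+he_{j})-f(z,\lambda)}{h}-(\partial_{\lambda_{j}})^{E}f(z,\lambda).
\]
Linearity of the Pettis integral rewrites the difference between the difference quotient of $G$ and the target integral as $\int_{\gamma}\Delta_{h}(z)\,\d z$, and the estimate from (a) reduces matters to showing $\Delta_{h}(z)\to 0$ uniformly in $z\in\gamma([a,b]^{d})$ as $h\to 0$.

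To verify the latter, test against $e'\in U_{\alpha}^{\circ}$. Since $s\mapsto(e'\circ f)(z,\lambda+she_{j})$ is real-$\mathcal{C}^{1}$ on $[0,1]$ with derivative $h\cdot e'\circ(\partial_{\lambda_{j}})^{E}f(z,\lambda+she_{j})$ (by continuity of $e'$ together with the definition of $(\partial_{\lambda_{j}})^{E}f$), the scalar fundamental theorem of calculus gives
\[
e'(\Delta_{h}(z))=\int_{0}^{1}\bigl[e'\circ(\partial_{\lambda_{j}})^{E}f(z,\lambda+she_{j})-e'\circ(\partial_{\lambda_{j}})^{E}f(z,\lambda)\bigr]\,\d s,
\]
whence, by supremising over $e'\in U_{\alpha}^{\circ}$,
\[
p_{\alpha}(\Delta_{h}(z))\leq\int_{0}^{1}p_{\alpha}\bigl((\partial_{\lambda_{j}})^{E}f(z,\lambda+she_{j})-(\partial_{\lambda_{j}})^{E}f(z,\lambda)\bigr)\,\d s.
\]
Uniform continuity of the continuous map $(\partial_{\lambda_{j}})^{E}f$ on the compact set $\gamma([a,b]^{d})\times\overline{\mathbb{B}_{r}(\lambda)}$ makes this integrand arbitrarily small uniformly in $(z,s)$ once $|h|$ is small enough, yielding the required uniform decay of $p_{\alpha}(\Delta_{h}(z))$.

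The main obstacle is upgrading the naturally scalar information (via arbitrary $e'\in E'$) to a genuinely locally convex estimate on $E$ itself, without resorting to Cauchy's integral formula, which this lemma is meant to prepare rather than use. The polar representation of $p_{\alpha}$ combined with the scalar fundamental theorem of calculus applied to $e'\circ f$ bridges this gap cleanly; local completeness of $E$ enters only implicitly, through the existence of the Pettis integrals already established in \prettyref{prop:path.integral}.
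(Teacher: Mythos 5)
Your proof is correct and follows essentially the same route as the paper: part a) is the same $\sup$-estimate $p_{\alpha}(\int_{\gamma}\cdot)\leq l(\gamma)\sup p_{\alpha}(\cdot)$ for the Pettis integral, and in part b) you express the difference-quotient error as an integral of $(\partial_{\lambda_{j}})^{E}f$ along the segment from $\lambda_{j}$ to $\lambda_{j}+h$ and conclude by uniform continuity on the compact set $\gamma([a,b]^{d})\times\overline{\mathbb{B}_{r}(\lambda)}$ together with a). The only cosmetic difference is that you apply the scalar fundamental theorem of calculus to $e'\circ f$ and pass back to seminorms via polars, whereas the paper invokes its vector-valued fundamental theorem of calculus (\prettyref{thm:hauptsatz}) directly.
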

\begin{proof}
First, we remark that the integrals appearing in a) and b) are well-defined elements of $E$ 
by \prettyref{prop:path.integral} a) and the weakly $\gamma$-$\mathcal{C}^{1}$ condition.

a) Let $\alpha\in\mathfrak{A}$. Then we have 
\begin{flalign*}
&\quad \sup_{t\in T}p_{\alpha}\bigl(\int_{\gamma}f_{n}(z,t)\d z-\int_{\gamma}f(z,t)\d z\bigr)\\
&\leq l(\gamma)\sup_{(z,t)\in\gamma([a,b]^{d})\times T}p_{\alpha}(f_{n}(z,t)-f(z,t))\to 0,\quad n\to\infty,
\end{flalign*}
since $f_{n}\to f$ uniformly on $\gamma([a,b]^{d})\times T$.

b) Let $\lambda\in U$. Then there is $R>0$ such that $\overline{\mathbb{B}_{R}(\lambda)}\subset U$ as $U$ is open. 
 Let $(h_{n})$ be a null sequence in $\C\setminus\{0\}$ with $|h_{n}|<R/2$ for all $n\in\N$, which 
 implies that the line segment $\Gamma_{n}$ from $\lambda_{j}$ to $\lambda_{j}+h_{n}$ is a $\mathcal{C}^{1}$-curve in
 $\pi_{\lambda,j}^{-1}(\mathbb{B}_{R}(\lambda))$ that we parametrise by $[0,1]$. 
 Applying \prettyref{thm:hauptsatz} to the holomorphic function (in one variable)
 $f(z,\cdot)\circ\pi_{\lambda,j}\colon\pi_{\lambda,j}^{-1}(\mathbb{B}_{R}(\lambda))\to E$ for $z\in V$, we get 
 \begin{align*}
 f(z,\lambda+h_{n}e_{j})-f(z,\lambda)&=f(z,\pi_{\lambda,j}(\lambda_{j}+h_{n}))-f(z,\pi_{\lambda,j}(\lambda_{j}))\\
 &=\int_{\Gamma_{n}}(\partial_{\zeta_{j}})^{E}f(z,\pi_{\lambda,j}(\zeta_{j}))\d \zeta_{j}
 \end{align*}
 and therefore
 \begin{align*}
 |f_{n}(z,\lambda)|:=&\Bigl|\frac{f(z,\lambda+h_{n}e_{j})-f(z,\lambda)}{h_{n}}-(\partial_{\lambda_{j}})^{E}f(z,\lambda)\Bigr|\\
 =&\Bigl|\frac{1}{h_{n}}\int_{\Gamma_{n}}(\partial_{\zeta_{j}})^{E}f(z,\pi_{\lambda,j}(\zeta_{j}))
 -(\partial_{\lambda_{j}})^{E}f(z,\lambda) \d \zeta_{j}\Bigr|\\
 \leq & \frac{1}{|h_{n}|}l(\Gamma_{n})\sup_{\zeta_{j}\in\Gamma_{n}([0,1])}
 \bigl|(\partial_{\zeta_{j}})^{E}f(z,\pi_{\lambda,j}(\zeta_{j}))-(\partial_{\lambda_{j}})^{E}f(z,\lambda)\bigr|.
 \end{align*}
 Hence we obtain 
 \begin{flalign*}
 &\quad \sup_{z\in\gamma([a,b]^{d})}|f_{n}(z,\lambda)|\\
 &\leq \sup_{z\in\gamma([a,b]^{d})}\sup_{\zeta_{j}\in\Gamma_{n}([0,1])}
 \bigl|(\partial_{\zeta_{j}})^{E}f(z,\pi_{\lambda,j}(\zeta_{j}))-(\partial_{\lambda_{j}})^{E}f(z,\lambda)\bigr|
 \to 0,\quad n\to\infty,
 \end{flalign*}
 since $(\partial_{\lambda_{j}})^{E}f$ is uniformly continuous on the compact set 
 $\gamma([a,b]^{d})\times \overline{\mathbb{B}_{R}(\lambda)}$, 
 meaning $f_{n}\to 0$ uniformly on $\gamma([a,b]^{d})\times\{\lambda\}$.  
 From part a) we conclude $\int_{\gamma}f_{n}(z,\lambda)\d z\to 0$ and thus 
 \begin{align*}
 \int_{\gamma}(\partial_{\lambda_{j}})^{E}f(z,\lambda)\d z
 &=\lim_{n\to\infty}\int_{\gamma}\frac{f(z,\lambda+h_{n}e_{j})-f(z,\lambda)}{h_{n}}\d z\\
 &=\lim_{n\to\infty}\frac{G(z,\lambda+h_{n}e_{j})-G(z,\lambda)}{h_{n}}
 =(\partial^{e_{j}}_{\C})^{E}G(\lambda).
 \end{align*}
\end{proof}

\section{Holomorphic functions}

Now, we want to define complex partial derivatives of higher order for an $E$-valued function $f$. 
Let $E$ be an lcHs over $\C$ and $\Omega\subset\C^{d}$ open. 
A function $f\colon\Omega\to E$ is called complex partially differentiable on $\Omega$ 
and we write $f\in\mathcal{D}^{1}_{\C}(\Omega,E)$ 
if $\partial^{e_{j}}_{\C}f(z):=(\partial^{e_{j}}_{\C})^{E}f(z)\in E$ for every $z\in\Omega$ and $1\leq j\leq d$ 
(see Definition \ref{def:holom} c)).
For $k\in\N$, $k\geq 2$, a function $f$ is said to be $k$-times complex partially differentiable and
we write $f\in\mathcal{D}^{k}_{\C}(\Omega,E)$ if $f\in\mathcal{D}^{1}_{\C}(\Omega,E)$ and 
all its first complex partial derivatives are in $\mathcal{D}^{k-1}_{\C}(\Omega,E)$.
A function $f$ is called infinitely complex partially differentiable and we write $f\in\mathcal{D}^{\infty}_{\C}(\Omega,E)$
if $f\in\mathcal{D}^{k}_{\C}(\Omega,E)$ for every $k\in\N$.

Let $f\in\mathcal{D}^{k}_{\C}(\Omega,E)$. For $\beta=(\beta_{1},\ldots,\beta_{d})\in\N_{0}^{d}$ with 
$|\beta|:=\sum_{j=1}^{d}\beta_{j}\leq k$ we set 
$\partial^{\beta_{j}}_{\C}f:=(\partial^{\beta_{j}}_{\C})^{E}f:=f$, if $\beta_{j}=0$, and
\[
\partial^{\beta_{j}}_{\C}f:=(\partial^{\beta_{j}}_{\C})^{E}f
:=\underbrace{(\partial^{e_{j}}_{\C})\cdots(\partial^{e_{j}}_{\C})}_{\beta_{j}\text{-times}}f,
\]
if $\beta_{j}\neq 0$, as well as 
\[
\partial^{\beta}_{\C}f:=(\partial^{\beta}_{\C})^{E}f
:=(\partial^{\beta_{1}}_{\C})\cdots(\partial^{\beta_{d}}_{\C})f.
\]
A holomorphic function $f\colon\Omega\to E$ can be considered as a function from $\Omega$ to $\widehat{E}$,
which gives us $f\in\mathcal{D}^{1}_{\C}(\Omega,\widehat{E})$ (see \prettyref{rem:properties.of.holom.map} c)). 
Our goal is to show that we actually have $f\in\mathcal{D}^{\infty}_{\C}(\Omega,E)$ if $E$ is locally complete 
via proving Cauchy's integral formula for holomorphic functions.
For this purpose we recall the definition of a polydisc, its distinguished boundary and define integration along the
distinguished boundary.
For $w=(w_{1},\ldots,w_{d})\in\C^{d}$ and $R=(R_{1},\ldots,R_{d})\in(0,\infty]^{d}$ we define the \emph{polydisc} 
$\D_{R}(w):=\prod_{k=1}^{d}\mathbb{B}_{R_{k}}(w_{k})$ and its \emph{distinguished boundary} 
$\partial_{0}\D_{R}(w):=\prod_{k=1}^{d}\partial\mathbb{B}_{R_{k}}(w_{k})$. For $R,\rho\in(0,\infty]^{d}$ 
we write $\rho<R$ if $\rho_{k}<R_{k}$ for all $1\leq k\leq d$. 
For a function $f\colon\Omega\to E$ on a set $\Omega\subset\C^{d}$ with $\overline{\D_{\rho}(w)}\subset\Omega$ 
for some $w\in\C^{d}$ and $\rho\in(0,\infty)^{d}$ we set
\[
 \int_{\partial_{0}\D_{\rho}(w)}f(z)\d z:=\int_{\gamma}f(z)\d z
\]
if the integral on the right-hand side exists where $\gamma$ is the $\mathcal{C}^{1}$-curve in $\Omega$ given by 
the restriction $\gamma:=\widetilde{\gamma}_{\mid [0,2\pi]^{d}}$ of the map $\widetilde{\gamma}\colon\R^{d}\to\C^{d}$ 
defined by $\widetilde{\gamma}_{k}\colon\R\to\C$, $\widetilde{\gamma}_{k}(t):=w_{k}+\rho_{k}e^{it}$, for $1\leq k\leq d$.
Further, we need the usual notation
\[
\beta !:=\prod_{j=1}^{d}(\beta_{j}!)\quad\text{and}\quad (z-\zeta)^{\beta}:=\prod_{j=1}^{d}(z_{j}-\zeta_{j})^{\beta_{j}} 
\]
for $\beta\in\N_{0}^{d}$ and $z,\zeta\in\C^{d}$.

\begin{thm}[{Cauchy's integral formula}]\label{thm:cauchy.int.formula}
 Let $E$ be a locally complete lcHs over $\C$, $\Omega\subset\C^{d}$ open, $w\in\Omega$, $R\in(0,\infty]^{d}$
 with $\D_{R}(w)\subset\Omega$ and $f\colon\Omega\to E$ be holomorphic. 
 Then 
 \begin{equation}\label{eq:cauchy.int.formula.1}
  (\partial^{\beta}_{\C})f(\zeta)
 =\frac{\beta !}{(2\pi i)^{d}}\int_{\partial_{0}\D_{\rho}(w)}\frac{f(z)}{(z-\zeta)^{\beta+(1,\ldots,1)}}\d z\in E,
  \quad \zeta\in\D_{\rho}(w),\,\beta\in\N_{0}^{d},
 \end{equation}
 for all $\rho\in(0,\infty)^{d}$ with $\rho<R$.   
\end{thm}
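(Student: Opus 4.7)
The plan is to reduce to the scalar multi-variable Cauchy integral formula via Hahn-Banach for the base case $\beta = 0$, and then to obtain all higher-order derivatives by iterating \prettyref{lem:compl.diff.under.int} (Leibniz' rule) in an induction on $|\beta|$.

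The first step is to verify that the right-hand side of \eqref{eq:cauchy.int.formula.1} is a well-defined element of $E$. For fixed $\zeta \in \D_{\rho}(w)$ and $\beta \in \N_{0}^{d}$, the integrand $g_{\zeta}(z) := f(z)/(z-\zeta)^{\beta+(1,\ldots,1)}$ is holomorphic in $z$ on some open neighborhood $V \subset \Omega$ of the compact set $\partial_{0}\D_{\rho}(w)$, chosen to avoid the singular hyperplanes $\{z : z_{k} = \zeta_{k}\}$ (possible since $\rho < R$ strictly separates the polydisc boundary from $\zeta$ in each coordinate). By \prettyref{prop:path.integral} c) and a), $g_{\zeta}$ is then weakly $\gamma$-$\mathcal{C}^{1}$ and Pettis-integrable along $\gamma$, so the quantity
\[
I_{\beta}(\zeta) := \frac{\beta!}{(2\pi i)^{d}}\int_{\partial_{0}\D_{\rho}(w)}\frac{f(z)}{(z-\zeta)^{\beta+(1,\ldots,1)}}\,\d z
\]
lies in $E$.

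For the base case $\beta = 0$, applying any $e' \in E'$ to $I_{0}(\zeta)$ yields, by the defining property of the Pettis integral and \prettyref{thm:fubini},
\[
\langle e', I_{0}(\zeta)\rangle = \frac{1}{(2\pi i)^{d}}\int_{\partial_{0}\D_{\rho}(w)}\frac{(e'\circ f)(z)}{(z-\zeta)^{(1,\ldots,1)}}\,\d z,
\]
which equals $(e'\circ f)(\zeta)$ by iterating the classical one-variable scalar Cauchy integral formula $d$ times (using that $e'\circ f$ is jointly holomorphic by \prettyref{rem:properties.of.holom.map} d)). Since this holds for every $e' \in E'$ and $E$ is Hausdorff, the Hahn-Banach theorem gives $I_{0}(\zeta) = f(\zeta)$. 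For the inductive step with $|\beta| \geq 1$, write $\beta = \widetilde{\beta} + e_{j}$ and fix $\zeta_{0} \in \D_{\rho}(w)$. Choose an open $U \ni \zeta_{0}$ with $\overline{U} \subset \D_{\rho}(w)$ and an open $V \subset \Omega$ containing $\partial_{0}\D_{\rho}(w)$ such that $z_{k} \neq \zeta_{k}$ for all $(z, \zeta) \in V \times \overline{U}$ and all $k$. On $V \times U$, the function $h_{\widetilde{\beta}}(z, \zeta) := f(z)/(z-\zeta)^{\widetilde{\beta}+(1,\ldots,1)}$ is holomorphic in $z$ (hence weakly $\gamma$-$\mathcal{C}^{1}$), holomorphic in $\zeta$, and its partial derivative $(\partial_{\zeta_{j}})^{E} h_{\widetilde{\beta}} = (\widetilde{\beta}_{j}+1)h_{\widetilde{\beta}+e_{j}}$ is jointly continuous by \prettyref{rem:properties.of.holom.map} b). \prettyref{lem:compl.diff.under.int} b) therefore yields $(\partial^{e_{j}}_{\C})^{E} I_{\widetilde{\beta}}(\zeta) = I_{\beta}(\zeta)$ on $U$, where the prefactor $(\widetilde{\beta}_{j}+1)$ arising from the derivative combines with $\widetilde{\beta}!$ to produce $\beta!$. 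Combined with the inductive hypothesis $\partial^{\widetilde{\beta}}_{\C} f = I_{\widetilde{\beta}}$ and \prettyref{rem:properties.of.holom.map} e), this closes the induction: $\partial^{\beta}_{\C} f(\zeta_{0}) = I_{\beta}(\zeta_{0})$.

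The main obstacle I expect is verifying the hypotheses of \prettyref{lem:compl.diff.under.int} at each induction step — particularly the joint continuity of the $\zeta$-partial derivatives on the chosen $V \times U$ and the weakly $\gamma$-$\mathcal{C}^{1}$ property of each $\zeta$-slice. These verifications are routine given the careful selection of $V$ and $U$ strictly separating the integration contour from the poles of the integrand, which is always possible thanks to the compactness of $\partial_{0}\D_{\rho}(w)$.
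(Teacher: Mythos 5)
Your proposal follows essentially the same route as the paper: scalar Cauchy plus Hahn--Banach for $\beta=0$, then induction on $|\beta|$ via Leibniz' rule \prettyref{lem:compl.diff.under.int} b) applied to $f(z)/(z-\lambda)^{\widetilde{\beta}+(1,\ldots,1)}$ on a product of a neighbourhood of the distinguished boundary and a small polydisc around the base point. The one step you elide is the final identification $\partial^{e_{j}}_{\C}\bigl(\partial^{\widetilde{\beta}}_{\C}f\bigr)=\partial^{\beta}_{\C}f$: since the paper defines $\partial^{\beta}_{\C}$ by applying the coordinate derivatives in a fixed order, an arbitrary decomposition $\beta=\widetilde{\beta}+e_{j}$ produces the derivatives in the wrong order, and commutativity of the mixed complex partials is not yet available at this stage (the paper only records it afterwards, in \prettyref{cor:compl.deriv.commute}, as a \emph{consequence} of this proof). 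The paper closes this by observing that the integral formula itself, applied at order two, shows $\partial^{e_{j}}_{\C}\partial^{e_{k}}_{\C}f=\partial^{e_{k}}_{\C}\partial^{e_{j}}_{\C}f$, whence the general case; alternatively you could simply choose $j$ to be the smallest index with $\beta_{j}>0$, so that $\partial^{e_{j}}_{\C}\partial^{\beta-e_{j}}_{\C}f$ is $\partial^{\beta}_{\C}f$ by definition. Either patch is a one-line addition, but as written the last sentence of your induction does assume an unproved interchange of derivatives.
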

\begin{proof}
 Let $\widetilde{\gamma}$ and $\gamma$ be defined as above for $\partial_{0}\D_{\rho}(w)$. First, we consider the case $\beta=0$. 
 We set
 \[
 g_{\zeta}\colon\Omega\setminus\{\zeta\}\to E,\;
 g_{\zeta}(z):=\frac{f(z)}{(z-\zeta)^{(1,\ldots,1)}},
 \]
 for $\zeta\in\D_{\rho}(w)$ and observe that 
 $g_{\zeta}\circ\widetilde{\gamma}$ is weakly $\mathcal{C}^{1}$ on $\R^{d}$ 
 since $f$ is holomorphic on $\Omega$ and $\widetilde{\gamma}\in\mathcal{C}^{1}(\R^{d},\C^{d})$. 
 Thus $g_{\zeta}$ is weakly $\gamma$-$\mathcal{C}^{1}$ and integrable along $\gamma$ by \prettyref{prop:path.integral} a).
 Since $f$ is weakly holomorphic and $g_{\zeta}$ integrable along $\gamma$, we get by the scalar version of 
 Cauchy's integral formula that 
 \begin{align*}
 (e'\circ f)(\zeta)&=\frac{1}{(2\pi i)^{d}}\int_{\partial_{0}\D_{\rho}(w)}\frac{(e'\circ f)(z)}{(z-\zeta)^{(1,\ldots,1)}}\d z\\
 &=\langle e', \frac{1}{(2\pi i)^{d}}\int_{\partial_{0}\D_{\rho}(w)}\frac{f(z)}{(z-\zeta)^{(1,\ldots,1)}}\d z\rangle,\quad e'\in E',
 \end{align*}
 implying
 \[
 f(\zeta)=\frac{1}{(2\pi i)^{d}}\int_{\partial_{0}\D_{\rho}(w)}\frac{f(z)}{(z-\zeta)^{(1,\ldots,1)}}\d z
 \]
 by the Hahn-Banach theorem, which proves \eqref{eq:cauchy.int.formula.1} for $\beta=0$.
 
 Let $n\in\N_{0}$ and \eqref{eq:cauchy.int.formula.1} be fulfilled for every $\beta\in\N_{0}^{d}$ with $|\beta|=n$. 
 Let $\beta\in\N_{0}^{d}$ with $|\beta|=n+1$. Then there is $j\in\N$, $1\leq j\leq d$, and $\widetilde{\beta}\in\N_{0}^{d}$ with 
 $|\widetilde{\beta}|=n$ such that $\beta=\widetilde{\beta}+e_{j}$. 
 Let $\zeta\in\D_{\rho}(w)$. Then there is 
 $0<r<\rho$ such that $\zeta\in\D_{r}(w)$. 
 We define the open set $V:=\D_{R}(w)\setminus\overline{\D_{r}(w)}$ and the function  
 \[
 F_{\widetilde{\beta}}\colon V\times \D_{r}(w) \to E,\;
 F_{\widetilde{\beta}}(z,\lambda):=\frac{f(z)}{(z-\lambda)^{\widetilde{\beta}+(1,\ldots,1)}}.
 \]
 Further, we compute for $\lambda\in\D_{r}(w)$
 \[
 \partial_{\lambda_{j}}F_{\widetilde{\beta}}(z,\lambda)
 =\frac{(\widetilde{\beta}_{j}+1)f(z)}{(z-\lambda)^{\widetilde{\beta}+e_{j}+(1,\ldots,1)}}
 =\frac{\beta_{j}f(z)}{(z-\lambda)^{\beta+(1,\ldots,1)}}\in E, \quad z\in V.
 \]
 We see that $\gamma$ is a $\mathcal{C}^{1}$-curve in $V$ and 
 $F_{\widetilde{\beta}}(\cdot,\lambda)\circ\widetilde{\gamma}$ and 
 $\partial_{\lambda_{j}}F_{\widetilde{\beta}}(\cdot,\lambda)\circ\widetilde{\gamma}$ are weakly 
 $\mathcal{C}^{1}$ on $\R^{d}$ for every $\lambda\in\D_{r}(w)$ since $f$ is holomorphic 
 on $\Omega$. Hence $F_{\widetilde{\beta}}(\cdot,\lambda)$ and 
 $\partial_{\lambda_{j}}F_{\widetilde{\beta}}(\cdot,\lambda)$ 
 are weakly $\gamma$-$\mathcal{C}^{1}$ for every $\lambda\in\D_{r}(w)$. 
 In addition, $\partial_{\lambda_{j}}F_{\widetilde{\beta}}$ is continuous 
 on $V\times\D_{r}(w)$ by \prettyref{rem:properties.of.holom.map} b),
 $F_{\widetilde{\beta}}(z,\cdot)$ is holomorphic on $\D_{r}(w)$ for every $z\in V$ 
 and thus we can apply Leibniz' rule \prettyref{lem:compl.diff.under.int} b), yielding
 \[
  \partial^{e_{j}}_{\C}(\partial^{\widetilde{\beta}}_{\C}f)(\lambda)
  =\frac{\widetilde{\beta}!}{(2\pi i)^{d}}\int_{\gamma}\partial_{\lambda_{j}}F_{\widetilde{\beta}}(z,\lambda)\d z
  =\frac{\beta!}{(2\pi i)^{d}}\int_{\gamma}\frac{f(z)}{(z-\lambda)^{\beta+(1,\ldots,1)}}\d z\in E
 \]
 for every $\lambda\in\D_{r}(w)$, in particular for $\lambda=\zeta$, 
 where we used the induction hypothesis in the first equation. It remains to be shown that  
 $\partial^{e_{j}}_{\C}(\partial^{\widetilde{\beta}}_{\C}f)(\lambda)=\partial^{\beta}_{\C}f(\lambda)$ for every $\lambda\in\D_{r}(w)$, 
 i.e.\ that the order of the partial derivatives does not matter. 
 For $\widetilde{\beta}=0$ this is clear. If $|\widetilde{\beta}|=1$, then our 
 preceding considerations imply that 
 \[
\partial^{e_{j}}_{\C}\partial^{e_{k}}_{\C}f(\lambda)=\frac{1}{(2\pi i)^{d}}
\int_{\partial_{0}\D_{\rho}(w)}\frac{f(z)}{(z-\lambda)^{e_{j}+e_{k}+(1,\ldots,1)}}\d z
=\partial^{e_{k}}_{\C}\partial^{e_{j}}_{\C}f(\lambda)
\]
for all $1\leq j,k\leq d$. 
This yields that $\partial^{e_{j}}_{\C}(\partial^{\widetilde{\beta}}_{\C}f)(\lambda)=\partial^{\beta}_{\C}f(\lambda)$ 
for every $\lambda\in\D_{r}(w)$
\end{proof}

Cauchy's integral formula for derivatives is usually derived by using the Riemann-integral instead of the Pettis-integral 
and can be found for holomorphic functions in one variable in \cite[Th\'eor\`eme 1, p.\ 37-38]{Grothendieck1953}, 
in several variables in \cite[Corollary 3.7, p.\ 85]{bochnak1971} and infinitely many variables in 
\cite[Proposition 2.4, p.\ 55]{dineen1981} as well. 
The Riemann-integrals are elements of $E$ under the 
condition that $E$ has ccp in \cite{Grothendieck1953} 
or more general if $E$ is sequentially complete in \cite{bochnak1971} and \cite{dineen1981} by 
\cite[Lemma 1.1, p.\ 79]{bochnak1971}. In general, they are only elements of the completion $\widehat{E}$. 
From our approach using Pettis-integrals we guarantee that they belong to $E$ even if $E$ is only locally complete. 

\begin{cor}\label{cor:compl.deriv.commute}
If $E$ is a locally complete lcHs over $\C$, $\Omega\subset\C^{d}$ open 
and $f\colon\Omega\to E$ holomorphic, then $(\partial^{\beta}_{\C})f$ 
does not depend on the order of the partial derivatives involved and $f\in\mathcal{D}^{\infty}_{\C}(\Omega,E)$.
\end{cor}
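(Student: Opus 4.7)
The plan is to read both claims off \prettyref{thm:cauchy.int.formula} directly. Given any $\zeta\in\Omega$, I would choose $w\in\Omega$ and $\rho\in(0,\infty)^{d}$ with $\zeta\in\D_{\rho}(w)$ and $\overline{\D_{\rho}(w)}\subset\Omega$. Then \prettyref{thm:cauchy.int.formula} yields
\[
(\partial^{\beta}_{\C})f(\zeta)=\frac{\beta!}{(2\pi i)^{d}}\int_{\partial_{0}\D_{\rho}(w)}\frac{f(z)}{(z-\zeta)^{\beta+(1,\ldots,1)}}\d z\in E
\]
for every $\beta\in\N_{0}^{d}$, where the integral on the right-hand side depends only on the multi-index $\beta$ itself, and not on any particular ordering of iterated first-order complex partial derivatives.

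From this, $f\in\mathcal{D}^{\infty}_{\C}(\Omega,E)$ would follow by a direct induction on $k$: the existence in $E$ of all iterated partial derivatives in the canonical ordering fixed by the definition preceding \prettyref{thm:cauchy.int.formula} immediately yields $f\in\mathcal{D}^{k}_{\C}(\Omega,E)$ for every $k\in\N$.

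For the order-independence of the partial derivatives, the final paragraph of the proof of \prettyref{thm:cauchy.int.formula} already establishes the second-order Schwarz-type identity $\partial^{e_{j}}_{\C}\partial^{e_{k}}_{\C}f=\partial^{e_{k}}_{\C}\partial^{e_{j}}_{\C}f$ for all $1\leq j,k\leq d$, since both sides are represented by the same Cauchy integral with $\beta=e_{j}+e_{k}$. I would propagate this to an arbitrary ordering of a sequence of iterated first-order partial derivatives by writing every permutation as a composition of adjacent transpositions and invoking the second-order case at each swap. For this argument to apply at every stage, the intermediate function on which the swap acts must itself be holomorphic on the polydisc under consideration; this follows from differentiating the Cauchy integral representation in its free variable via Leibniz' rule (\prettyref{lem:compl.diff.under.int} b)), which produces further Cauchy integrals lying in $E$ and establishes both the existence of partial derivatives of the intermediate function and their continuity. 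The main obstacle I expect is precisely this bootstrapping step: cleanly verifying that an intermediate partial derivative obtained by a non-canonical ordering coincides with a canonically ordered one, so that \prettyref{thm:cauchy.int.formula} (and thereby the second-order swap) genuinely applies to it and the induction on $|\beta|$ closes.
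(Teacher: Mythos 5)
Your proposal is correct and follows essentially the same route as the paper, which likewise reads both claims directly off \prettyref{thm:cauchy.int.formula} and its proof: the Cauchy integral depends only on the multi-index $\beta$, giving order-independence, and membership in $E$ of all these integrals gives $f\in\mathcal{D}^{\infty}_{\C}(\Omega,E)$. The bootstrapping step you flag as the main obstacle is precisely what the induction in the proof of \prettyref{thm:cauchy.int.formula} already supplies (each intermediate derivative is itself represented by a Cauchy integral, to which Leibniz' rule applies again), so your elaboration via adjacent transpositions just makes explicit what the paper leaves implicit.
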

\begin{proof}
The independence of the order follows from the proof of Cauchy's integral formula \prettyref{thm:cauchy.int.formula}. 
Combining this formula with the commutativity of the complex 
partial derivatives, we conclude $f\in\mathcal{D}^{\infty}_{\C}(\Omega,E)$.
\end{proof}

For an lcHs $E$ over $\C$, an open set $\Omega\subset\C^{d}$ and a 
function $f\colon\Omega\to E$, we write $f\in\mathcal{C}^{k}_{\R}(\Omega,E)$ if 
$f\circ\phi^{-1}\in\mathcal{C}^{k}(\phi(\Omega),E)$ for $k\in\N_{0,\infty}$. 
We define the space 
\[
\mathcal{O}(\Omega,E):=\bigl\{f\in\mathcal{C}^{1}_{\R}(\Omega,E)\;|\;\forall\;\beta\in\N_{0}^{d},\,z\in\Omega:\;
(\partial^{\beta}_{\C})^{E}f(z)\in E\bigr\},
\]
which we equip with the system of seminorms given by 
\[
 |f|_{K,\alpha}:=\sup_{z\in K}p_{\alpha}(f(z)),\quad f\in \mathcal{O}(\Omega,E),
\]
for $K\subset\Omega$ compact and $\alpha\in\mathfrak{A}$. If $E=\C$, we just write $\mathcal{O}(\Omega):=\mathcal{O}(\Omega,\C)$.

Due to Cauchy's integral formula and \prettyref{rem:compl-diff.is.real-diff} in combination with 
\prettyref{rem:properties.of.holom.map} c)+e), we already know 
that every holomorphic function $f\colon\Omega\to E$ is an element of $\mathcal{O}(\Omega,E)$ and 
we prove in the following that every element of $\mathcal{O}(\Omega,E)$ is a holomorphic function on $\Omega$ 
as well if $E$ is locally complete. 
The space $\mathcal{O}(\Omega)$ coincides with the space of all $\C$-valued holomorphic functions on
$\Omega$ in the sense of \cite[Definition 1.7.1, p.\ 47]{pflug2008} by \cite[Theorem 1.7.6, p.\ 48-49]{pflug2008} 
and is a Fr\'{e}chet space by \cite[Example 1.10.7 (a), p.\ 66]{pflug2008}. 
As a start in proving that $\mathcal{O}(\Omega,E)$ is the space of all holomorphic functions from $\Omega$ 
to a locally complete space $E$, we show that the elements of $\mathcal{O}(\Omega,E)$ fulfil the Cauchy inequality.

\begin{cor}[{Cauchy inequality}]\label{cor:cauchy.ineq}
Let $E$ be a locally complete lcHs over $\C$ and $\Omega\subset\C^{d}$ open.
\begin{enumerate}
\item [a)] If $w\in\Omega$, $R\in(0,\infty]^{d}$ with $\D_{R}(w)\subset\Omega$ 
and $f\in\mathcal{O}(\Omega,E)$, then 
 \begin{equation}\label{eq:cauchy.ineq.1}
  p_{\alpha}(\partial^{\beta}_{\C}f(\zeta))
  \leq \frac{\beta !}{\rho^{\beta}}\max_{z\in\partial_{0}\D_{\rho}(w)}p_{\alpha}(f(z)),
  \quad \zeta\in\D_{\rho}(w),\, \beta\in\N_{0}^{d},
 \end{equation}
for every $\rho\in(0,\infty)^{d}$ with $\rho<R$ and $\alpha\in\mathfrak{A}$.
\item [b)] For every compact set $K\subset\Omega$ there is a compact set $K'\subset \Omega$ such that 
for every $\beta\in\N_{0}^{d}$ there is $C_{K,\beta}>0$ such that for every $\alpha\in\mathfrak{A}$ 
and every $f\in\mathcal{O}(\Omega,E)$ it holds that
 \begin{equation}\label{eq:cauchy.ineq.2}
 \sup_{z\in K}p_{\alpha}(\partial^{\beta}_{\C}f(z))\leq C_{K,\beta}\max_{z\in K'}p_{\alpha}(f(z)).
 \end{equation}
\end{enumerate}
\end{cor}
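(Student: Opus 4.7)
The plan is to deduce both inequalities by scalarising with continuous linear functionals, applying Cauchy's integral formula (\prettyref{thm:cauchy.int.formula}) to the scalar compositions $e'\circ f$, and converting moduli to seminorms via the bipolar theorem.

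First I would verify that every $f\in\mathcal{O}(\Omega,E)$ is weakly holomorphic: for $e'\in E'$, continuity and linearity of $e'$ give $e'\circ f\circ\phi^{-1}\in\mathcal{C}^{1}(\phi(\Omega))$ and, by an easy induction, $(\partial^{\beta}_{\C})(e'\circ f)=e'\circ(\partial^{\beta}_{\C})^{E}f$ for every $\beta\in\N_{0}^{d}$, so $e'\circ f\in\mathcal{O}(\Omega)$. By the cited identification of $\mathcal{O}(\Omega)$ with the classical scalar-valued holomorphic functions, $e'\circ f$ is holomorphic in the usual sense, and the scalar Cauchy integral formula (or \prettyref{thm:cauchy.int.formula} applied to $e'\circ f$) supplies the integral representation
\[
e'(\partial^{\beta}_{\C}f(\zeta))=\frac{\beta!}{(2\pi i)^{d}}\int_{\partial_{0}\D_{\rho}(w)}\frac{(e'\circ f)(z)}{(z-\zeta)^{\beta+(1,\ldots,1)}}\d z
\]
for every $e'\in E'$, $\zeta\in\D_{\rho}(w)$ and $\rho<R$.

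For part a), I parametrise $\partial_{0}\D_{\rho}(w)$ by $\widetilde{\gamma}_{k}(t)=w_{k}+\rho_{k}e^{it}$, so that $l(\gamma)=(2\pi)^{d}\prod_{k}\rho_{k}$ and $|z_{k}-w_{k}|=\rho_{k}$ on the distinguished boundary. The direct estimate of the scalar integrand then yields, for each $e'\in E'$,
\[
|e'(\partial^{\beta}_{\C}f(\zeta))|\leq\frac{\beta!}{\rho^{\beta}}\max_{z\in\partial_{0}\D_{\rho}(w)}|e'(f(z))|.
\]
To convert this scalar bound into a seminorm bound I invoke the bipolar representation $p_{\alpha}(x)=\sup\{|e'(x)|\;:\;e'\in U_{\alpha}^{\circ}\}$, where $U_{\alpha}:=\{x\in E\;:\;p_{\alpha}(x)\leq 1\}$ and $U_{\alpha}^{\circ}$ is its polar in $E'$; since $|e'(f(z))|\leq p_{\alpha}(f(z))$ uniformly in $z$ for $e'\in U_{\alpha}^{\circ}$, taking the supremum over $e'\in U_{\alpha}^{\circ}$ on both sides produces \eqref{eq:cauchy.ineq.1}.

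For part b) I would use compactness to reduce to part a). Because $K$ is compact and $\Omega$ is open, one can pick $\varepsilon>0$ such that $\overline{\D_{\rho_{0}}(z)}\subset\Omega$ for every $z\in K$, where $\rho_{0}:=(\varepsilon,\ldots,\varepsilon)$. The set $K':=\bigcup_{z\in K}\partial_{0}\D_{\rho_{0}}(z)$ is the image of the compact set $K\times\partial_{0}\D_{\rho_{0}}(0)$ under the continuous addition map $(z,u)\mapsto z+u$, hence is itself compact and contained in $\Omega$. Setting $C_{K,\beta}:=\beta!/\varepsilon^{|\beta|}$ and applying part a) at each $z\in K$ with $w=\zeta=z$ and $\rho=\rho_{0}$ yields \eqref{eq:cauchy.ineq.2} after taking the supremum over $z\in K$. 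The only genuine subtlety I anticipate is the bipolar step converting scalar moduli into continuous seminorm values; everything else is a direct transcription of the classical scalar argument, already enabled in the vector-valued setting by \prettyref{thm:cauchy.int.formula}.
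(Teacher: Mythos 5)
Your proposal is correct and follows essentially the same route as the paper: scalarise with $e'\in E'$, apply the scalar Cauchy integral formula to $e'\circ f\in\mathcal{O}(\Omega)$, estimate the integrand on the distinguished boundary, and recover the seminorm via the polar identity $p_{\alpha}(x)=\sup_{e'\in B_{\alpha}^{\circ}}|e'(x)|$ (the paper cites Meise--Vogt for exactly this bipolar step). The only cosmetic difference is in part b), where the paper covers $K$ by finitely many closed polydiscs inside $\Omega$ while you use a single uniform polydisc radius $\varepsilon$; both reductions to part a) are valid.
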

\begin{proof}
a) For $\alpha\in\mathfrak{A}$ we set $B_{\alpha}:=\{x\in E\;|\;p_{\alpha}(x)<1\}$, its polar 
$B_{\alpha}^{\circ}:=\{e'\in E'\;|\;\forall x\in B_{\alpha}:\;|e'(x)|\leq 1\}$ and
denote by $\gamma$ the $\mathcal{C}^{1}$-curve on $[0,2\pi]^{d}$ corresponding to $\partial_{0}\D_{\rho}(w)$. 
It follows from the scalar version of Cauchy's integral formula (see \cite[Theorem 1.7.6, p.\ 48-49]{pflug2008}) 
that for all $\zeta\in\D_{\rho}(w)$ and $\beta\in\N_{0}^{d}$ we have
\begin{flalign*}
 &\;\quad p_{\alpha}\bigl((\partial^{\beta}_{\C})^{E}f(\zeta)\bigr)\\
 &=\sup_{e'\in B_{\alpha}^{\circ}}\bigl|(\partial^{\beta}_{\C})^{\C}(e'\circ f)(\zeta)\bigr|
 =\frac{\beta !}{(2\pi)^{d}}\sup_{e'\in B_{\alpha}^{\circ}}\bigl|
 \int_{\partial_{0}\D_{\rho}(w)}\frac{e'(f(z))}{(z-\zeta)^{\beta+(1,\ldots,1)}}\d z\bigr|\\
 &\leq \frac{\beta !}{(2\pi)^{d}} \underbrace{l(\gamma)}_{=(2\pi)^{d}\rho^{(1,\ldots,1)}}\sup_{e'\in B_{\alpha}^{\circ}}
 \sup_{z\in\partial_{0}\D_{\rho}(w)}\frac{|e'(f(z))|}{\rho^{\beta+(1,\ldots,1)}}
 =\frac{\beta !}{\rho^{\beta}}\sup_{z\in\partial_{0}\D_{\rho}(w)}p_{\alpha}(f(z))
\end{flalign*}
where we used \cite[Proposition 22.14, p.\ 256]{meisevogt1997} in the first and last equation 
to get from $p_{\alpha}$ to $\sup_{e'\in B_{\alpha}^{\circ}}$ and back. Our statement 
follows from the continuity of $f$ on $\Omega$ by \prettyref{rem:properties.of.holom.map} b) and the compactness
of the distinguished boundary.

b) Is a direct consequence of a) since every compact set $K\subset\Omega$ can be covered 
by a finite number $n$ of open, bounded polydiscs $\D_{\rho_{j}}(w_{j})$ with 
$\overline{\D_{\rho_{j}}(w_{j})}\subset \Omega$ for $1\leq j\leq n$.
\end{proof}

For sequentially complete $E$ Cauchy's inequality can also be found in \cite[Proposition 2.5, p.\ 57]{dineen1981} 
and as a direct consequence we obtain:

\begin{rem}[{Weierstrass}]\label{rem:equiv.top}
Let $E$ be a locally complete lcHs over $\C$ and $\Omega\subset\C^{d}$ open. 
Then the system of seminorms generated by 
\[
 |f|_{K,m,\alpha}:=\sup_{\substack{z\in K\\\beta\in\N_{0}^{d},|\beta|\leq m}}
 p_{\alpha}\bigl((\partial^{\beta}_{\C})^{E}f(z)\bigr),\quad f\in \mathcal{O}(\Omega,E),
\]
for $K\subset\Omega$ compact, $m\in\N_{0}$ and $\alpha\in\mathfrak{A}$ 
induces the same topology on $\mathcal{O}(\Omega,E)$ as the system $(|f|_{K,\alpha})$ by 
\eqref{eq:cauchy.ineq.2}.
\end{rem}

This remark implies \cite[Proposition 3.1, p.\ 85]{bochnak1971} if $E$ is sequentially complete. 
We observe the following useful relation between real and complex first partial derivatives.

\begin{prop}\label{prop:first.real.compl.part.deriv}
If $E$ is an lcHs over $\C$, $\Omega\subset\C^{d}$ open 
and $f\in\mathcal{O}(\Omega,E)$, then for every $1\leq j\leq d$ and $x\in\phi(\Omega)$
\[
\partial^{e_{2j}}_{\R}(f\circ \phi^{-1})(x)=i
\partial^{e_{j}}_{\C}f(\phi^{-1}(x))
\quad\text{and}\quad 
\partial^{e_{2j-1}}_{\R}(f\circ \phi^{-1})(x)=
\partial^{e_{j}}_{\C}f(\phi^{-1}(x)).
\]
\end{prop}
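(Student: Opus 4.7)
The plan is to unwind the definitions of $\phi^{-1}$, the real partial derivative, and the complex partial derivative, and observe that the complex limit (which exists in $E$ by hypothesis $f\in\mathcal{O}(\Omega,E)$) automatically dictates the values of its restrictions along the real and imaginary axes.

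First, I would record the explicit formula for $\phi^{-1}$ acting on small perturbations of $x$. If $x = \phi(z)\in\phi(\Omega)$ for $z=\phi^{-1}(x)\in\Omega$, then for $h\in\R$ small enough
\[
\phi^{-1}(x + h\,e_{2j-1}) = z + h\,e_{j}\quad\text{and}\quad \phi^{-1}(x + h\,e_{2j}) = z + ih\,e_{j},
\]
since the $(2j-1)$-th real coordinate of $x$ corresponds to $\re z_j$ and the $2j$-th to $\operatorname{Im} z_j$.

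For the first identity I would then compute, using only the definition of the real partial derivative of $f\circ\phi^{-1}$,
\[
\partial^{e_{2j-1}}_{\R}(f\circ\phi^{-1})(x)
= \lim_{\substack{h\to 0\\ h\in\R,\,h\neq 0}}\frac{f(z+h\,e_{j})-f(z)}{h}.
\]
Since $f\in\mathcal{O}(\Omega,E)$, the complex limit $\partial^{e_{j}}_{\C}f(z)=\lim_{h\to 0,\,h\in\C\setminus\{0\}}(f(z+h\,e_{j})-f(z))/h$ exists in $E$; restricting to the (real) subfamily $h\in\R\setminus\{0\}$ gives the same limit in $E$, which proves the claim. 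For the second identity, parametrise $h'=ih$ with $h\in\R$ and write
\[
\partial^{e_{2j}}_{\R}(f\circ\phi^{-1})(x)
= \lim_{\substack{h\to 0\\ h\in\R,\,h\neq 0}}\frac{f(z+ih\,e_{j})-f(z)}{h}
= i\lim_{\substack{h'\to 0\\ h'\in i\R,\,h'\neq 0}}\frac{f(z+h'\,e_{j})-f(z)}{h'},
\]
and again the restricted limit equals $i\,\partial^{e_{j}}_{\C}f(z)$ because the full complex limit exists.

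There is no real obstacle here: the only thing to be careful about is that since $E$ is merely a Hausdorff lcHs (no completeness invoked in the statement), we must verify limits in $E$ itself, and this is guaranteed by the definition of $\mathcal{O}(\Omega,E)$ which demands $\partial^{e_{j}}_{\C}f(z)\in E$. Everything else is a direct substitution.
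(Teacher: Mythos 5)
Your proposal is correct and is essentially the paper's own proof: both unwind $\phi^{-1}$ on perturbations in the $(2j-1)$-th and $2j$-th real coordinates, and identify the resulting real difference quotients as restrictions (along the real resp. imaginary axis, after factoring out $i$) of the complex limit defining $\partial^{e_j}_{\C}f(z)$, which exists in $E$ by the definition of $\mathcal{O}(\Omega,E)$. No differences worth noting.
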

\begin{proof}
$f\in\mathcal{C}^{1}_{\R}(\Omega,E)$ and for $x=(x_{1},\ldots,x_{2d})\in\phi(\Omega)$ we get
\begin{align*}
  \partial^{e_{2j}}_{\R}(f\circ\phi^{-1})(x)
&=\lim_{\substack{h\to 0\\h\in\R,h\neq 0}}\frac{f(\ldots,x_{j-1}+ix_{j}+ih,\ldots)-f(\ldots,x_{j-1}+ix_{j},\ldots)}{h}\\
&=i\lim_{\substack{h\to 0\\h\in\R,h\neq 0}}\frac{f(\ldots,x_{j-1}+ix_{j}+ih,\ldots)-f(\ldots,x_{j-1}+ix_{j},\ldots)}{ih}\\
&=i\partial^{e_{j}}_{\C}f(\phi^{-1}(x))
\end{align*}
as well as
\begin{align*}
  \partial^{e_{2j-1}}_{\R}(f\circ\phi^{-1})(x)
&=\lim_{\substack{h\to 0\\h\in\R,h\neq 0}}\frac{f(\ldots,x_{j}+ix_{j+1}+h,\ldots)-f(\ldots,x_{j}+ix_{j+1},\ldots)}{h}\\
&=\partial^{e_{j}}_{\C}f(\phi^{-1}(x)).
\end{align*}
\end{proof}

\begin{prop}\label{prop:eps-prod-holom}
Let $E$ be a locally complete lcHs over $\C$ and $\Omega\subset\C^{d}$ open. 
Then the map 
\[
S\colon\mathcal{O}(\Omega)\varepsilon E\to\mathcal{O}(\Omega,E),\;u\longmapsto [z\mapsto u(\delta_{z})],
\]
is a (topological) isomorphism where $\delta_{z}$ is the point evaluation functional at $z$.
\end{prop}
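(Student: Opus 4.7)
The plan is to establish in sequence: well-definedness of $S$ with image in $\mathcal{O}(\Omega,E)$, continuity, injectivity, surjectivity, and continuity of the inverse. The decisive structural inputs are that $\mathcal{O}(\Omega)$ is a Fr\'echet (in fact Fr\'echet--Montel) space, so Mackey--Arens gives $(\mathcal{O}(\Omega)'_{\kappa})'=\mathcal{O}(\Omega)$ canonically, and that for each compact $K\subset\Omega$ the family $\{\delta_{z}:z\in K\}$ is equicontinuous on $\mathcal{O}(\Omega)$ via the trivial estimate $|\delta_{z}(g)|\leq|g|_{K}$. Cauchy's integral formula \prettyref{thm:cauchy.int.formula} will be used to show that $z\mapsto\delta_{z}$ is itself holomorphic into $\mathcal{O}(\Omega)'_{\kappa}$ with $\partial_{\C}^{\beta}\delta_{z}=(f\mapsto \partial_{\C}^{\beta}f(z))\in\mathcal{O}(\Omega)'$ for every $\beta\in\N_{0}^{d}$.

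I would first show $S(u)\in\mathcal{O}(\Omega,E)$ for $u\in\mathcal{O}(\Omega)\varepsilon E$. For every $e'\in E'$ the composition $e'\circ u\in(\mathcal{O}(\Omega)'_{\kappa})'=\mathcal{O}(\Omega)$, so $e'\circ S(u)$ coincides with a scalar holomorphic function and $S(u)$ is weakly holomorphic. Composition of the continuous linear $u$ with the real-$\mathcal{C}^{1}$ map $z\mapsto\delta_{z}$ gives $S(u)\circ\phi^{-1}\in\mathcal{C}^{1}(\phi(\Omega),E)$, and the identity $\partial_{\C}^{\beta}S(u)(z)=u(\partial_{\C}^{\beta}\delta_{z})\in E$ follows from linearity and continuity of $u$ together with the convergence of the relevant difference quotients of $\delta_{z}$ in $\mathcal{O}(\Omega)'_{\kappa}$. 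Continuity of $S$ is then immediate: $|S(u)|_{K,\alpha}=\sup_{z\in K}p_{\alpha}(u(\delta_{z}))$ is one of the defining seminorms of the $\varepsilon$-product topology on $L_{e}(\mathcal{O}(\Omega)'_{\kappa},E)$ associated with the equicontinuous set $\{\delta_{z}:z\in K\}$ and the seminorm $p_{\alpha}$. For injectivity, if $S(u)=0$ then $u$ vanishes on $V:=\operatorname{span}\{\delta_{z}:z\in\Omega\}$; as the polar $V^{\circ}=\{g\in\mathcal{O}(\Omega):g\equiv 0\}=\{0\}$ and the $\kappa$-topology is compatible with the duality $\langle\mathcal{O}(\Omega),\mathcal{O}(\Omega)'\rangle$, the bipolar theorem makes $V$ $\kappa$-dense in $\mathcal{O}(\Omega)'$ and $u=0$ follows.

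Surjectivity is the main obstacle. Given $f\in\mathcal{O}(\Omega,E)$, I define the continuous linear map $T_{f}\colon E'_{\kappa}\to\mathcal{O}(\Omega)$ by $T_{f}(e'):=e'\circ f$; continuity uses that $f(K)$ is compact in $E$ for compact $K\subset\Omega$, so $|T_{f}(e')|_{K}=\sup_{x\in f(K)}|e'(x)|$ is a $\kappa$-seminorm of $e'$. The wanted $u$ has to satisfy $\langle e',u(\mu)\rangle=\mu(e'\circ f)$ for all $\mu\in\mathcal{O}(\Omega)'$ and $e'\in E'$; the difficulty is to produce $u(\mu)\in E$ (not merely in $\widehat{E}$ or $E''$). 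Following the pattern of \prettyref{prop:path.integral}~a), I would use that any continuous $\mu\in\mathcal{O}(\Omega)'$ admits an estimate $|\mu(g)|\leq C|g|_{K_{\mu}}$ for some compact $K_{\mu}\subset\Omega$, so the linear functional $e'\mapsto\mu(e'\circ f)$ on $E'$ is controlled by $C\sup_{x\in f(K_{\mu})}|e'(x)|$; invoking local completeness of $E$ via \cite[Proposition 2, p.\ 354]{Bonet2002} to get that $\oacx(f(K_{\mu}))$ is compact in $E$, Mackey--Arens then provides $u(\mu)\in E$. The identity $S(u)(z)=u(\delta_{z})=f(z)$ is immediate, linearity and $\kappa$-continuity of $u$ follow from the continuity of $T_{f}$, and symmetric seminorm estimates give continuity of $S^{-1}$. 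The decisive use of local completeness lies precisely in this last representability step, mirroring the Pettis-integral construction that underpins the whole paper.
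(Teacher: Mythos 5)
Your proposal is correct and follows essentially the same route as the paper: the paper delegates the well-definedness of $\partial^{\beta}_{\C}S(u)(z)=u(\delta_{z}\circ\partial^{\beta}_{\C})$ to \cite[Proposition 10]{kruse2020} (using barrelledness of $\mathcal{O}(\Omega)$, as you do via equicontinuity of difference quotients) and the isomorphism property to \cite[Theorem 14 (iii)]{kruse2020}, whose hypothesis is exactly your key surjectivity input, namely that $\oacx(f(K))$ is compact by \cite[Proposition 2]{Bonet2002} thanks to local completeness and weak $\mathcal{C}^{1}$-ness of $f\circ\phi^{-1}$. You have simply unfolded those citations into an explicit Mackey--Arens representability argument, which is the same mechanism the paper uses throughout.
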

\begin{proof}
Let $u\in\mathcal{O}(\Omega)\varepsilon E$. 
Due to \cite[Proposition 10, p.\ 1520]{kruse2020} and the barrelledness of the Fr\'echet space $\mathcal{O}(\Omega)$ we have 
$(\partial^{\beta}_{\C})^{E}S(u)(z)=u(\delta_{z}\circ\partial^{\beta}_{\C})\in E$ 
for all $\beta\in\N_{0}^{d}$ and $z\in\Omega$ where one has to replace $\partial^{\beta}_{\R}$ by $\partial^{\beta}_{\C}$
and the space $\mathcal{CW}^{k}(\Omega)$ by $\mathcal{O}(\Omega)$ in the proof of \cite[Proposition 10, p.\ 1520]{kruse2020}.
Furthermore, $S(u)\in \mathcal{C}^{1}_{\R}(\Omega,E)$ by \cite[Proposition 10, p.\ 1520]{kruse2020} with $k=1$, 
\prettyref{rem:equiv.top} and \prettyref{prop:first.real.compl.part.deriv}, which implies that $S(u)\in\mathcal{O}(\Omega,E)$. 

Let $f\in\mathcal{O}(\Omega,E)$ and $K\subset\Omega$ be compact. 
It is easily checked that $e'\circ f\in\mathcal{O}(\Omega)$ for every $e'\in E'$.
Since $f\circ\phi^{-1}$ is weakly $\mathcal{C}^{1}$ on the open set $\phi(\Omega)\subset\R^{2d}$, 
it follows from \cite[Proposition 2, p.\ 354]{Bonet2002} that 
$K_{1}:=\overline{\operatorname{acx}}(f(K))$ is a absolutely convex and compact.
The inclusion $N_{K}(f):=f(K)\subset K_{1}$ implies that $S$ is a (topological) isomorphism by \cite[Theorem 14 (iii), p.\ 1524]{kruse2020}. 
\end{proof}

Once we have the equivalent conditions for holomorphy from our main \prettyref{thm:holom.equiv}, 
namely the equivalence `$a)\Leftrightarrow d)$', the preceding 
proposition is just a consequence of \cite[Theorem 9, p.\ 232]{B/F/J}.

\begin{thm}\label{thm:powerseries}
  Let $E$ be a locally complete lcHs over $\C$, $z\in\C^{d}$ and $R\in(0,\infty]^{d}$. 
  Then the tensor product $\mathcal{O}(\D_{R}(z))\otimes E$ is sequentially dense in $\mathcal{O}(\D_{R}(z),E)$ and
   \[
    f=\sum_{\beta\in\N_{0}^{d}}\frac{(\partial^{\beta}_{\C})^{E}f(z)}{\beta!}(\cdot - z)^{\beta}
   \]
   for all $f\in\mathcal{O}(\D_{R}(z),E)$ where the series converges in $\mathcal{O}(\D_{R}(z),E)$.
\end{thm}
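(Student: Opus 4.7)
The plan is to obtain the Taylor series of $f$ at $z$ from Cauchy's integral formula by expanding the Cauchy kernel as a geometric series, and to establish simultaneously that the partial sums---which are vector-valued polynomials---converge to $f$ in the topology of $\mathcal{O}(\D_{R}(z),E)$, giving both assertions at once.

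First I would set $a_{\beta}:=(\partial^{\beta}_{\C})^{E}f(z)/\beta!$, which is a well-defined element of $E$ by the very definition of $\mathcal{O}(\D_{R}(z),E)$, and
\[
S_{N}(\zeta):=\sum_{|\beta|\leq N}a_{\beta}(\zeta-z)^{\beta},\quad N\in\N_{0},
\]
so that $S_{N}\in\mathcal{O}(\D_{R}(z))\otimes E$ since each monomial $(\cdot-z)^{\beta}$ belongs to $\mathcal{O}(\D_{R}(z))$. The Cauchy inequality \prettyref{cor:cauchy.ineq} a) yields, for every $\rho\in(0,\infty)^{d}$ with $\rho<R$ and every $\alpha\in\mathfrak{A}$, the bound $p_{\alpha}(a_{\beta})\leq \rho^{-\beta}M_{\alpha,\rho}$ with $M_{\alpha,\rho}:=\max_{w\in\partial_{0}\D_{\rho}(z)}p_{\alpha}(f(w))$. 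Given a compact $K\subset\D_{R}(z)$, I pick $r,\rho\in(0,\infty)^{d}$ with $r<\rho<R$ and $K\subset\overline{\D_{r}(z)}$; then
\[
\sup_{\zeta\in K}p_{\alpha}\bigl(a_{\beta}(\zeta-z)^{\beta}\bigr)\leq M_{\alpha,\rho}\prod_{k=1}^{d}\Bigl(\frac{r_{k}}{\rho_{k}}\Bigr)^{\beta_{k}},
\]
and summing over $\beta\in\N_{0}^{d}$ gives a convergent product of geometric series. Hence $(S_{N})_{N}$ is Cauchy and thus converges absolutely uniformly on $K$ in every $p_{\alpha}$, i.e.\ in every seminorm $|\cdot|_{K,\alpha}$ of $\mathcal{O}(\D_{R}(z),E)$.

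To identify the limit with $f$, I fix $\zeta\in\D_{R}(z)$, choose $\rho\in(0,\infty)^{d}$ with $\rho<R$ and $\zeta\in\D_{\rho}(z)$, and start from the case $\beta=0$ of \prettyref{thm:cauchy.int.formula},
\[
f(\zeta)=\frac{1}{(2\pi i)^{d}}\int_{\partial_{0}\D_{\rho}(z)}\frac{f(w)}{(w-\zeta)^{(1,\ldots,1)}}\d w.
\]
I then expand the Cauchy kernel as a product of one-variable geometric series,
\[
\frac{1}{(w-\zeta)^{(1,\ldots,1)}}=\sum_{\beta\in\N_{0}^{d}}\frac{(\zeta-z)^{\beta}}{(w-z)^{\beta+(1,\ldots,1)}},
\]
which converges absolutely and uniformly in $w\in\partial_{0}\D_{\rho}(z)$ for the fixed $\zeta$. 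Multiplying by $f(w)$ preserves the uniform convergence in every $p_{\alpha}$ on the compact set $\partial_{0}\D_{\rho}(z)$, so I can invoke \prettyref{lem:compl.diff.under.int} a) with a singleton parameter set to interchange the summation with the Pettis-integral. The coefficient of $(\zeta-z)^{\beta}$ coming out of the integral is exactly $a_{\beta}$ by Cauchy's integral formula for derivatives, so $\sum_{\beta}a_{\beta}(\zeta-z)^{\beta}=f(\zeta)$. Together with the uniform estimate above this proves $S_{N}\to f$ in $\mathcal{O}(\D_{R}(z),E)$, yielding both the Taylor expansion and the sequential density of $\mathcal{O}(\D_{R}(z))\otimes E$.

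The main obstacle is the term-by-term integration of a vector-valued series against the Pettis-integral when $E$ is merely locally complete. This is precisely the situation \prettyref{lem:compl.diff.under.int} a) was engineered for, and its hypotheses are met because the partial sums and the full kernel times $f$ are all weakly $\gamma$-$\mathcal{C}^{1}$ on $\partial_{0}\D_{\rho}(z)$ on account of $f$ being holomorphic.
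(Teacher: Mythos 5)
Your route is genuinely different from the paper's. The paper disposes of this theorem in two lines: the monomials $(\cdot-z)^{\beta}$ form an equicontinuous Schauder basis of the Fr\'echet space $\mathcal{O}(\D_{R}(z))$ with coefficient functionals $\tfrac{1}{\beta!}(\delta_{z}\circ\partial^{\beta}_{\C})$, and the statement is then read off from the isomorphism $\mathcal{O}(\D_{R}(z),E)\cong\mathcal{O}(\D_{R}(z))\varepsilon E$ of \prettyref{prop:eps-prod-holom} together with a cited abstract transfer result for Schauder bases in $\varepsilon$-products. Your direct argument---geometric expansion of the Cauchy kernel, term-by-term Pettis-integration via \prettyref{lem:compl.diff.under.int}~a), and the Cauchy inequality \eqref{eq:cauchy.ineq.1} for uniform convergence on compacta---is self-contained and, importantly, handles local completeness in the right order: you identify the pointwise limit in $E$ through the integral representation \emph{before} invoking the uniform Cauchy estimate, so you never need a Cauchy sequence in $E$ to converge, which mere local completeness would not guarantee.

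There is, however, one genuine flaw in the proof as written. The theorem concerns arbitrary $f\in\mathcal{O}(\D_{R}(z),E)$, and it is used in the paper precisely to prove (\prettyref{cor:hol.in.O}) that such $f$ are holomorphic; at this stage only the converse inclusion is available. Yet you invoke \prettyref{thm:cauchy.int.formula}, whose hypothesis is that $f$ is holomorphic, and you justify the weakly $\gamma$-$\mathcal{C}^{1}$ hypotheses of \prettyref{lem:compl.diff.under.int} ``on account of $f$ being holomorphic''. As written this is circular. The repair is routine but must be made explicit: for $f\in\mathcal{O}(\D_{R}(z),E)$ one has $e'\circ f\in\mathcal{O}(\D_{R}(z))$ for every $e'\in E'$, so the scalar Cauchy formula applies to $e'\circ f$; the Pettis-integrals of $f(w)/(w-\zeta)^{(1,\ldots,1)}$ and of $f(w)/(w-z)^{\beta+(1,\ldots,1)}$ along $\partial_{0}\D_{\rho}(z)$ exist in $E$ by \prettyref{prop:path.integral}~a)+b) because $f\circ\phi^{-1}$ is weakly $\mathcal{C}^{1}$; and the Hahn--Banach theorem then yields the two representations you need, of $f(\zeta)$ and of $\partial^{\beta}_{\C}f(z)$, the latter using $e'\circ(\partial^{\beta}_{\C})^{E}f=(\partial^{\beta}_{\C})^{\C}(e'\circ f)$, which follows from the definition of $\mathcal{O}(\D_{R}(z),E)$. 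This is exactly how the paper itself proves the Cauchy inequality \prettyref{cor:cauchy.ineq} for $\mathcal{O}$-functions rather than for holomorphic ones. With that substitution your argument is correct.
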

\begin{proof}
The monomials $(\cdot - z)^{\beta}$, $\beta\in\N_{0}^{d}$, form an equicontinuous Schauder basis with 
associated coefficient functionals $\tfrac{1}{\beta!}(\delta_{z}\circ\partial^{\beta}_{\C})$ 
of the barrelled space $\mathcal{O}(\D_{R}(z))$ by \cite[Theorem 1.7.6, p.\ 48-49]{pflug2008}. 
Thus our statement follows from \prettyref{prop:eps-prod-holom} and \cite[3.6 Corollary, p.\ 7]{kruse2018_1}.  
\end{proof}

In the one variable case the theorem above is given in \cite[3.6 Corollary c), p.\ 7]{kruse2018_1} combined with
\cite[4.5 Theorem, p.\ 13]{kruse2018_1} as well. 

\begin{cor}\label{cor:hol.in.O}
Let $E$ be a locally complete lcHs over $\C$ and $\Omega\subset\C^{d}$ open. 
Then the following statements are equivalent for a function $f\colon\Omega\to E$.
\begin{enumerate}
 \item [a)] $f$ is holomorphic on $\Omega$.
 \item [b)] $f\in\mathcal{O}(\Omega,E)$.
\end{enumerate}
\end{cor}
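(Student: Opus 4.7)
The plan is to handle the two implications separately. The direction $a)\Rightarrow b)$ is essentially already laid out in the paragraph preceding \prettyref{cor:cauchy.ineq}, so I would only spell it out: \prettyref{thm:cauchy.int.formula} produces every complex partial derivative $(\partial^{\beta}_{\C})^{E}f(z)$ as an element of $E$, and \prettyref{rem:compl-diff.is.real-diff} together with \prettyref{rem:properties.of.holom.map} c)+e) identifies the real first partials of $f\circ\phi^{-1}$ (a priori in $\widehat{E}$) with $\pm i$ times the complex first partials of $f$. Since the latter lie in $E$, we obtain $f\circ\phi^{-1}\in\mathcal{C}^{1}(\phi(\Omega),E)$, i.e.\ $f\in\mathcal{C}^{1}_{\R}(\Omega,E)$, and hence $f\in\mathcal{O}(\Omega,E)$.

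For the nontrivial implication $b)\Rightarrow a)$, I would argue locally and invoke \prettyref{thm:powerseries}. Fix $z_{0}\in\Omega$ and $R\in(0,\infty)^{d}$ with $\overline{\D_{R}(z_{0})}\subset\Omega$. The restriction of $f$ lies in $\mathcal{O}(\D_{R}(z_{0}),E)$, so \prettyref{thm:powerseries} yields the expansion
\[
f(w)=\sum_{\beta\in\N_{0}^{d}}\frac{(\partial^{\beta}_{\C})^{E}f(z_{0})}{\beta!}(w-z_{0})^{\beta},\quad w\in\D_{R}(z_{0}),
\]
with convergence in $\mathcal{O}(\D_{R}(z_{0}),E)$. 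As the candidate differential I would take the $\C$-linear map
\[
df(z_{0})\colon\C^{d}\to E,\quad v\longmapsto\sum_{j=1}^{d}(\partial^{e_{j}}_{\C})^{E}f(z_{0})\,v_{j},
\]
separate the zeroth- and first-order summands, and rewrite
\[
f(w)-f(z_{0})-df(z_{0})[w-z_{0}]=\sum_{|\beta|\geq 2}\frac{(\partial^{\beta}_{\C})^{E}f(z_{0})}{\beta!}(w-z_{0})^{\beta}.
\]

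The heart of the argument, and the step I expect to be the main obstacle, is the seminorm estimate on this tail. For any $\rho\in(0,\infty)^{d}$ with $\rho<R$, the Cauchy inequality \prettyref{cor:cauchy.ineq} a) yields $p_{\alpha}((\partial^{\beta}_{\C})^{E}f(z_{0})/\beta!)\leq M_{\alpha}\rho^{-\beta}$ with $M_{\alpha}:=\max_{z\in\partial_{0}\D_{\rho}(z_{0})}p_{\alpha}(f(z))$. Restricting to $w$ with $|w_{j}-z_{0,j}|\leq\rho_{j}/2$ and setting $r:=\max_{j}|w_{j}-z_{0,j}|/\rho_{j}\leq 1/2$, the tail is dominated termwise by $M_{\alpha}r^{|\beta|}$, and summing over $|\beta|\geq 2$ via the identity $\sum_{\beta\in\N_{0}^{d}}r^{|\beta|}=(1-r)^{-d}$ produces a bound of order $r^{2}$, hence $O(|w-z_{0}|^{2})$ in every seminorm. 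Dividing by $|w-z_{0}|$ and letting $w\to z_{0}$ gives the differentiability limit in $E$, a fortiori in $\widehat{E}$. Continuity of $df(\cdot)[v]\colon\Omega\to\widehat{E}$ then reduces via $\C$-linearity to continuity of each $(\partial^{e_{j}}_{\C})^{E}f$, which is immediate from $f\in\mathcal{C}^{1}_{\R}(\Omega,E)$ and \prettyref{prop:first.real.compl.part.deriv}, verifying \prettyref{def:holom} a).
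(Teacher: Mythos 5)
Your proposal is correct and follows essentially the same route as the paper: the implication $a)\Rightarrow b)$ is dispatched exactly as in the remark preceding \prettyref{cor:cauchy.ineq}, and for $b)\Rightarrow a)$ both arguments take $v\mapsto\sum_{j}(\partial^{e_{j}}_{\C})^{E}f(z_{0})v_{j}$ as the candidate differential, control the remainder of the power series from \prettyref{thm:powerseries} via the Cauchy inequality \eqref{eq:cauchy.ineq.1}, and obtain continuity of $df(\cdot)[v]$ from $f\in\mathcal{C}^{1}_{\R}(\Omega,E)$ and \prettyref{prop:first.real.compl.part.deriv}. The only (immaterial) difference is bookkeeping in the tail estimate: you bound $\sum_{|\beta|\geq 2}$ termwise by a geometric series to get an $O(|w-z_{0}|^{2})$ bound, whereas the paper factors out $(w_{j}-z_{j})$ and lets a parameter $\varepsilon\to 0$.
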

\begin{proof}
We only need to prove the implication `$b)\Rightarrow a)$'. We claim that for every $z\in\Omega$ it holds that
\begin{equation}\label{eq:hol.in.O.1}
df(z)[v]=\sum_{j=1}^{d}\partial^{e_{j}}_{\C}f(z)v_{j},\quad v\in\C^{d}.
\end{equation}
Observe that the right-hand side is already linear in $v$.
Let $\alpha\in\mathfrak{A}$ and $z\in\Omega$. Then there is 
$R\in (0,\infty]^{d}$ such that $\D_{R}(z)\subset \Omega$. We fix 
$\rho\in(0,\infty)^{d}$ with $\rho < R$ and derive from \prettyref{thm:powerseries} a) 
that 
\begin{align*}
g(w,z):=& f(w)-f(z)-\sum_{j=1}^{d}\partial^{e_{j}}_{\C}f(z)(w_{j}-z_{j})\\
=&\sum_{|\beta|>1}\frac{\partial^{\beta}_{\C}f(z)}{\beta!}(w - z)^{\beta}
=\sum_{j=1}^{d}(w_{j} - z_{j})\sum_{|\beta|\geq 1}
\frac{\partial^{\beta+e_{j}}_{\C}f(z)}{(\beta+e_{j})!}(w - z)^{\beta}
\end{align*}
for every $w\in\overline{\D_{\rho}(z)}$.

Let $0<\varepsilon\leq 1$ and set $r:=\min_{1\leq j\leq d} \rho_{j}$. We observe that 
$\mathbb{B}_{\varepsilon\cdot r/2}(z)$ is a subset of $\D_{\rho}(z)$. 
Applying Cauchy's inequality \eqref{eq:cauchy.ineq.1}, we obtain for $w\in\mathbb{B}_{\varepsilon\cdot r/2}(z)$
\begin{flalign*}
&\quad p_{\alpha}\Bigl(\frac{\partial^{\beta+e_{j}}_{\C}f(z)}{(\beta+e_{j})!}(w - z)^{\beta}\Bigr)\\
&\leq \frac{\prod_{k=1}^{d}|w_{k}-z_{k}|^{\beta_{k}}}{\rho^{\beta+e_{j}}} \max_{\zeta\in\partial_{0}\D_{\rho}(z)} p_{\alpha}(f(\zeta))
\leq \frac{1}{r}\max_{\zeta\in\partial_{0}\D_{\rho}(z)}p_{\alpha}(f(\zeta))
\prod_{k=1}^{d}\Bigl(\frac{|w-z|}{r}\Bigr)^{\beta_{k}}\\
&\leq \frac{1}{r}\max_{\zeta\in\partial_{0}\D_{\rho}(z)}p_{\alpha}(f(\zeta))
\prod_{k=1}^{d}\varepsilon^{\beta_{k}}\frac{1}{2^{\beta_{k}}}
\leq \frac{\varepsilon^{d}}{2^{|\beta|}r}\max_{\zeta\in\partial_{0}\D_{\rho}(z)}p_{\alpha}(f(\zeta)).
\end{flalign*}
Hence we conclude for every $w\in\mathbb{B}_{\varepsilon\cdot r/2}(z)$
\[
    p_{\alpha}\Bigl(\frac{g(w,z)}{|w-z|}\Bigr)
\leq\frac{d\cdot\varepsilon^{d}}{r}\max_{\zeta\in\partial_{0}\D_{\rho}(z)}p_{\alpha}(f(\zeta))
    \sum_{|\beta|\geq 1}\frac{1}{2^{|\beta|}}
\leq\frac{2^{d}\cdot d\cdot\varepsilon^{d}}{r}\max_{\zeta\in\partial_{0}\D_{\rho}(z)}p_{\alpha}(f(\zeta))
\]
where the last estimate follows from \cite[Corollary 1.2.14 (a), p.\ 12-13]{pflug2008}. 
Letting $\varepsilon\to 0$, proves \eqref{eq:hol.in.O.1}.

Fix $v\in\C^{d}$ and let $z,w\in\Omega$. The estimate 
\[
p_{\alpha}(df(w)[v]-df(z)[v])
\leq \sum_{j=1}^{d}p_{\alpha}\bigl(\partial^{e_{j}}_{\C}f(w)-\partial^{e_{j}}_{\C}f(z)\bigr)|v_{j}|
\]
implies that $df(\cdot)[v]$ is continuous on $\Omega$ since $f\in\mathcal{C}^{1}_{\R}(\Omega)$ 
and by \prettyref{prop:first.real.compl.part.deriv}. Therefore $f$ is holomorphic on $\Omega$. 
\end{proof}

\section{The main theorem}

We briefly recall the following definitions which enable us to phrase our main theorem 
concerning holomorphic functions in several variables. 
Let $E$ be an lcHs over $\C$. For an open set $\Omega\subset\R^{2d}$ and $1\leq j\leq d$ we define 
the \emph{Cauchy-Riemann operator} by
\[
\overline{\partial}_{j}f(x):=(\overline{\partial}_{j})^{E}f(x)
:=\frac{1}{2}(\partial^{e_{2j-1}}_{\R}+i\partial^{e_{2j}}_{\R})f(x),
\quad f\in\mathcal{C}^{1}(\Omega,E),\,x\in\Omega.
\]
A function $f\colon\Omega\to E$ from a topological space $\Omega$ to $E$ is called \emph{locally bounded} 
on a subset $\Lambda\subset\Omega$ if for every $z\in\Lambda$ there is a neighbourhood $U\subset\Omega$ of $z$ 
such that $f$ is bounded on $U$.
A subspace $G\subset E'$ is said to be \emph{separating} if for every $x,y\in E$ there is  
$e'\in G$ such that $e'(x)\neq e'(y)$. 
A subspace $G\subset E'$ is said to \emph{determine boundedness} if every $\sigma(E,G)$-bounded subset of $E$ is already bounded 
where $\sigma(E,G)$ denotes the weak topology w.r.t.\ the dual pair $\langle E,G\rangle$.
If $G$ determines boundedness, then $G$ is separating. For instance, $G:=E'$ determines boundedness by Mackey's theorem 
and more examples are given in \cite[Remark 1.4, p.\ 781]{Arendt2000} and \cite[Remark 11, p.\ 233]{B/F/J}.

\begin{thm}\label{thm:holom.equiv}
 Let $E$ be a locally complete lcHs over $\C$ and $\Omega\subset\C^{d}$ be open. 
 Then the following statements are equivalent for a function $f\colon\Omega\to E$.
 \begin{enumerate}
 \item [a)] $f$ is holomorphic (G\^{a}teaux-, separately holomorphic) on $\Omega$.
 \item [b)] $f\in\mathcal{C}(\Omega,E)$ and $\partial^{e_{j}}_{\C}f(z)$ exists in $\widehat{E}$ 
 for every $z\in\Omega$ and $1\leq j\leq d$.
 \item [c)] $\partial^{e_{j}}_{\C}f(z)$ exists in $\widehat{E}$ for every $z\in\Omega$ and $1\leq j\leq d$.
 \item [d)] $f\in\mathcal{C}^{\infty}_{\R}(\Omega,E)$ and $\overline{\partial}_{j}(f\circ\phi^{-1})=0$ 
 for all $1\leq j\leq d$.
 \item [e)] There is a subspace $G\subset E'$ which determines boundedness such that $e'\circ f$ is holomorphic 
 (G\^{a}teaux-, separately holomorphic) on $\Omega$ for every $e'\in G$. 
 \item [f)] $f$ is locally bounded outside some compact set $K\subset\Omega$ 
 and there is a separating subspace $G\subset E'$ such that 
 $e'\circ f$ is holomorphic (G\^{a}teaux-, separately holomorphic) on $\Omega$ for every $e'\in G$.
 \item [g)] For every $z\in\Omega$ there are $R\in(0,\infty]^{d}$ and 
 $(a_{\beta})_{\beta\in\N_{0}^{d}}\subset E$ such that 
 \[
  f=\sum_{\beta\in\N_{0}^{d}}a_{\beta}(\cdot - z)^{\beta} \quad\text{on}\;\D_{R}(z).
 \]
 \end{enumerate}
 If one of the equivalent conditions above is fulfilled, then 
 \begin{equation}\label{eq:holom.equiv.1}
  df(z)[v]=Df(z)[v]=\sum_{j=1}^{d}\partial^{e_{j}}_{\C}f(z)v_{j}\in E,\quad z\in\Omega,\,v\in\C^{d}.
 \end{equation}
\end{thm}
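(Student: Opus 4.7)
The plan is to fold everything into a cycle of implications centred on the condition $f\in\mathcal{O}(\Omega,E)$, using \prettyref{cor:hol.in.O} as the bridge back to $a)$, and then to peel off the remaining conditions by leaning on the classical scalar Hartogs theorem. Formula \eqref{eq:holom.equiv.1} itself comes for free once the cycle is established, since it is nothing but \eqref{eq:hol.in.O.1} combined with \prettyref{rem:properties.of.holom.map}~c).

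Starting from $a_1)$ (ordinary holomorphy), \prettyref{cor:hol.in.O} immediately gives $f\in\mathcal{O}(\Omega,E)$, whence $b)$ and $c)$; \prettyref{prop:first.real.compl.part.deriv} and \prettyref{cor:compl.deriv.commute} upgrade $f$ to $\mathcal{C}^\infty_\R(\Omega,E)$ and convert the relations between real and complex partials into the Cauchy--Riemann equations $\overline{\partial}_j(f\circ\phi^{-1})=0$, yielding $d)$; \prettyref{thm:powerseries} supplies $g)$; and $G:=E'$, which determines boundedness by Mackey's theorem, supplies $e)$ and a fortiori $f)$. In the other direction, $g)\Rightarrow a_1)$ is routine by term-by-term differentiation of the power series using the Cauchy estimates of \prettyref{cor:cauchy.ineq}, and $d)\Rightarrow a_1)$ follows by the standard two-limit calculation: the Cauchy--Riemann equations combined with $\mathcal{C}^\infty_\R$-smoothness produce the complex partials in $E$, after which \prettyref{cor:hol.in.O} reapplies.

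What remains is to close the loop from $b),\,c)$, from $e),\,f)$, and from the separate-/Gâteaux-holomorphic variants of $a)$. In each of these cases I would first post-compose with $e'$ ranging over the appropriate subspace of $E'$ so as to obtain, via the scalar Hartogs theorem, that $e'\circ f\in\mathcal{O}(\Omega)$ for every such $e'$; the problem then reduces to a single purely vector-valued step: \emph{weak holomorphy on a locally complete $E$ implies $f\in\mathcal{O}(\Omega,E)$}. This is the point I expect to be the main obstacle. My approach would mirror the $\beta=0$ case of \prettyref{thm:cauchy.int.formula}: for $w\in\Omega$ with $\D_\rho(w)\subset\Omega$ and $\zeta\in\D_\rho(w)$, the integrand
\[
g_\zeta(z):=\frac{f(z)}{(z-\zeta)^{(1,\ldots,1)}}
\]
is weakly $\gamma$-$\mathcal{C}^1$, so \prettyref{prop:path.integral}~a) together with the scalar Cauchy formula would deliver a Pettis-integral $(2\pi i)^{-d}\int_{\partial_0\D_\rho(w)}g_\zeta(z)\,\d z\in E$ agreeing weakly with $f(\zeta)$ on the chosen subspace.

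For hypothesis $c)$ and for the separate-/Gâteaux-holomorphic variants the subspace is all of $E'$, so the Hahn--Banach theorem finishes the identification and bootstrapping to higher-order complex partials via \prettyref{thm:cauchy.int.formula} places $f$ in $\mathcal{O}(\Omega,E)$. For hypothesis $e)$ the $\sigma(E,G)$-boundedness of $f$ on compacta is promoted to genuine boundedness precisely because $G$ determines boundedness, which is what \prettyref{prop:path.integral}~a) (through \cite[Proposition~2, p.\ 354]{Bonet2002}) needs in order to guarantee compactness of $\overline{\operatorname{acx}}(f(\partial_0\D_\rho(w)))$; afterwards a Hahn--Banach-type separation argument on $G$ closes the identification. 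Hypothesis $f)$ is handled by applying the preceding argument on $\Omega\setminus K$ and then removing the residual compact exceptional set by an $E$-valued Riemann-type removable-singularity step. Once $f\in\mathcal{O}(\Omega,E)$ has been secured, \prettyref{cor:hol.in.O} returns us to $a_1)$ and completes the cycle.
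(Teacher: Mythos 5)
Your reduction of everything to the single step ``$e'\circ f\in\mathcal{O}(\Omega)$ for all $e'\in G$ implies $f\in\mathcal{O}(\Omega,E)$'' is a sensible organisation, and for $G=E'$ (which, via the scalar Hartogs theorem, covers $b)$, $c)$, $g)$ and the G\^ateaux/separate variants of $a)$) your Pettis-integral argument modelled on the $\beta=0$ case of \prettyref{thm:cauchy.int.formula} can be pushed through, though you still owe the continuity of $f$ needed to run the Leibniz-rule bootstrap, since weak $\mathcal{C}^{1}$ does not imply continuity for merely locally complete $E$. The genuine gap is case $e)$ with a proper subspace $G\subsetneq E'$ that only determines boundedness. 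There your argument breaks exactly at the point you flag as the crux: \prettyref{prop:path.integral}~a) requires $g_{\zeta}\circ\widetilde{\gamma}$ to be weakly $\mathcal{C}^{1}$ with respect to \emph{all} of $E'$; for $e'\notin G$ you do not even know that $e'\circ f$ is measurable along $\gamma$, so the functional $I_{g_{\zeta}}(e')$ is undefined off $G$, and the boundedness of $f$ on compacta that ``$G$ determines boundedness'' yields only makes $\oacx(f(\partial_{0}\D_{\rho}(w)))$ a Banach disk, not a compact set, so the Mackey--Arens step producing an element of $E$ fails. The paper does \emph{not} integrate here; it invokes the weak--strong principle of Bonet--Frerick--Jord\'a \cite[Corollary 10 (a), p.\ 233]{B/F/J}, applied to $\mathcal{O}(\Omega)$ as a closed subspace of $\mathcal{C}^{\infty}(\phi(\Omega))$ together with \prettyref{prop:eps-prod-holom}. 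That external theorem is the missing ingredient, and no Hahn--Banach ``separation argument on $G$'' substitutes for it.

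The treatment of $f)$ is also flawed. The exceptional set $K$ is an arbitrary compact subset of $\Omega$, not a thin set, so no Riemann-type removable-singularity theorem applies (and $f$ is already defined on $K$; nothing is being extended — you must show the given $f$ is holomorphic across $K$). Moreover, on $\Omega\setminus K$ you only have a \emph{separating} $G$, which need not determine boundedness, so even your intended argument for $e)$ is unavailable there. The paper instead fixes $z$ and $j$, restricts to the one-variable slice $f\circ\pi_{z,j}$, observes that it is locally bounded outside the compact set $\pi_{z,j}^{-1}(K)$ and weakly holomorphic with respect to the separating $G$, and applies Grosse-Erdmann's one-variable theorem \cite[5.2 Theorem, p.\ 35]{grosse-erdmann1992} to obtain $\partial^{e_{j}}_{\C}f(z)\in E$, i.e.\ condition $c)$. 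Some one-variable input of this kind is indispensable, and your proposal contains no mechanism that produces it.
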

\begin{proof}
We write $x)(i)$ if we consider $x)$ for holomorphic functions, 
$x)(ii)$ if we consider $x)$ for G\^{a}teaux-holomorphic functions and 
$x)(iii)$ if we consider $x)$ for separately holomorphic functions in 
the cases $x\in\{a,e,f\}$. First, we remark that the endorsement \eqref{eq:holom.equiv.1} 
follows from Cauchy's integral formula and \prettyref{rem:properties.of.holom.map} c)+e).\\
`$a)(i)\Leftrightarrow e)(i)$': The implication `$\Rightarrow$' is clear with $G:=E'$. 
Let us turn to `$\Leftarrow$'. We claim that $\mathcal{O}(\Omega,E)$ coincides with the space of 
functions $f\colon\Omega\to E$ such that $e'\circ f\in\mathcal{O}(\Omega)$ for each $e'\in G$, 
which then yields the desired equivalence by \prettyref{cor:hol.in.O}. 
$\mathcal{O}(\Omega)$ is a closed subspace of $\mathcal{C}^{\infty}(\phi(\Omega))$ via the map $f\mapsto f\circ \phi^{-1}$ 
(see e.g.\ \cite[p.\ 691]{F/J/W}). Thus we can apply the weak-strong principle \cite[Corollary 10 (a), p.\ 233]{B/F/J} in 
combination with \cite[Definition 3, p.\ 229-230]{B/F/J} and \prettyref{prop:eps-prod-holom}, proving our claim. \\
`$a)(i)\Rightarrow g)$': Follows from \prettyref{cor:hol.in.O} and \prettyref{thm:powerseries} since
for every $f\in\mathcal{O}(\Omega,E)$ there is $z\in\Omega$ and $R\in(0,\infty]^{d}$ such 
that $f_{\mid \D_{R}(z)}\in\mathcal{O}(\D_{R}(z),E)$.\\
`$g)\Rightarrow e)(i)$': Let $z\in\Omega$, $R\in(0,\infty]^{d}$ and 
$(a_{\beta})_{\beta\in\N_{0}^{d}}\subset E$ be such that 
\[
 f(w)=\sum_{\beta\in\N_{0}^{d}}a_{\beta}(w - z)^{\beta}, \quad w\in\D_{R}(z).
\]
Then we have for every $e'\in G:=E'$ that
\[
 (e'\circ f)(w)=\sum_{\beta\in\N_{0}^{d}}e'(a_{\beta})(w - z)^{\beta}, \quad w\in\D_{R}(z),
\]
implying the holomorphy of $e'\circ f$ by \cite[Theorem 1.7.6, p.\ 48-49]{pflug2008}.\\
`$e)(i)\Rightarrow e)(ii)\Rightarrow e)(iii)\Rightarrow e)(i)$': The first implication is obvious, the second and 
the third follow from the scalar version of Hartogs' theorem (see \cite[Theorem 2.2.8, p.\ 28]{H3}).\\
`$a)(i)\Rightarrow a)(ii)\Rightarrow e)(ii)$' and `$a)(i)\Rightarrow a)(iii)\Rightarrow e)(iii)$': 
These implications are obvious.\\
`$a)(i)\Rightarrow b)\Rightarrow c)$': \prettyref{rem:properties.of.holom.map} b)+c) yields the first implication 
and the second is trivial.\\
`$c) \Rightarrow e)(i)$': This implication holds with $G:=E'$ due to $E'=(\widehat{E})'$ and the scalar version of 
Hartogs' theorem.\\
`$e)(i) \Rightarrow d)$': Let $f\colon\Omega\to E$ be such that $e'\circ f$ is holomorphic on $\Omega$ for every $e'\in G$. 
Then $e'\circ f\circ\phi^{-1}$ is $\mathcal{C}^{\infty}$ on $\phi(\Omega)$ for each $e'\in G$ and we even obtain 
$f\circ\phi^{-1}\in\mathcal{C}^{\infty}(\phi(\Omega),E)$ by the weak-strong principle 
\cite[Corollary 10 (a), p.\ 233]{B/F/J}. Furthermore, the holomorphy of $e'\circ f$
implies
\[
\langle e', (\overline{\partial}_{j})^{E}(f\circ\phi^{-1})(x)\rangle
=(\overline{\partial}_{j})^{\C}\bigr(e'\circ (f\circ\phi^{-1})\bigl)(x)
=0,\quad x\in\phi(\Omega),\,e'\in G,
\]
for every $1\leq j\leq d$ by \cite[Definition 2.1.1, p.\ 23]{H3}. 
We conclude that $d)$ is valid since $G$ is separating.\\ 
`$d) \Rightarrow e)(i)$': Let $f\in\mathcal{C}^{\infty}_{\R}(\Omega,E)$ and $\overline{\partial}_{j}(f\circ\phi^{-1})=0$ 
for all $1\leq j\leq d$. Then $f\circ\phi^{-1}$ is weakly $\mathcal{C}^{\infty}$ on $\phi(\Omega)$ and 
\[
(\overline{\partial}_{j})^{\C}\bigr(e'\circ (f\circ\phi^{-1})\bigl)(x)
=\langle e', (\overline{\partial}_{j})^{E}(f\circ\phi^{-1})(x)\rangle
=0,\quad x\in\phi(\Omega),\,e'\in E',
\]
for every $1\leq j\leq d$. We derive $e)(i)$ with $G:=E'$ from the scalar version of Hartogs' theorem 
and \cite[Definition 2.1.1, p.\ 23]{H3}.\\
`$a)(i) \Rightarrow f)(i)$': A consequence of \prettyref{rem:properties.of.holom.map} b)+d) 
with $K:=\varnothing$ and $G:=E'$.\\
`$f)(i) \Leftrightarrow f)(ii) \Leftrightarrow f)(iii)$': Follows from the correponding equivalences in case e).\\
`$f)(i) \Rightarrow c)$': Fix $1\leq j\leq d$ and $z\in\Omega$ and consider 
the map $f\circ\pi_{j,z}\colon\pi_{j,z}^{-1}(\Omega)\to E$. 
Let $w\in\pi_{j,z}^{-1}(\Omega)\setminus\pi_{j,z}^{-1}(K)$. Then $\pi_{j,z}(w)\in\Omega\setminus K$ and thus 
there is a neighbourhood $U\subset\Omega$ of $\pi_{j,z}(w)$ such that $f$ is bounded on $U$, implying 
that $f\circ\pi_{j,z}$ is bounded on the neighbourhood $\pi_{j,z}^{-1}(U)$ of $w$.  
Thus we can apply \cite[5.2 Theorem, p.\ 35]{grosse-erdmann1992} to $f\circ\pi_{j,z}$ and obtain 
that $(f\circ\pi_{z,j})'(w)$ exists in $E$ for all $w\in\pi_{j,z}^{-1}(\Omega)$, implying for $w=z_{j}$
\[
\partial^{e_{j}}_{\C}f(z)=(f\circ\pi_{z,j})'(z_{j})\in E.
\]
\end{proof}

The preceding theorem generalises corresponding theorems for $E$-valued holomorphic functions in one variable 
given in \cite[Satz 10.11, p.\ 241]{Kaballo} for quasi-complete $E$, 
in \cite[Th\'eor\`eme 1, p.\ 37-38]{Grothendieck1953} (cf.\ \cite[16.7.2 Theorem, p.\ 362-363]{Jarchow}) 
for $E$ with ccp and more general 
in \cite[2.1 Theorem and Definition, p.\ 17-18]{grosse-erdmann1992} and 
\cite[5.2 Theorem, p.\ 35]{grosse-erdmann1992} for locally complete $E$. 
In several variables our theorem improves \cite[Theorem 3.2, p.\ 83-84]{bochnak1971} where $E$ has to be sequentially complete
and even in one variable it is more general than the mentioned ones due to 
\prettyref{thm:holom.equiv} c). The equivalence `$a)(i)\Leftrightarrow a)(iii)$' is Hartogs' theorem 
and can be found for Banach-valued holomorphic functions on an open set $\Omega\subset\C^{d}$ 
in \cite[36.1 Theorem, p.\ 265]{mujica1985} and for holomorphic functions with values 
in a sequentially complete space in \cite[Corollary 3.6, p.\ 85]{bochnak1971}. 
The equivalence `$a)(i)\Leftrightarrow e)(i)$' is also contained in \cite[Corollary 10 (a), p.\ 233]{B/F/J}.

The following two corollaries improve \cite[Corollary 3.7, p.\ 85]{bochnak1971} from sequentially complete $E$ to locally complete $E$.

\begin{cor}\label{cor:compl.deriv.are.holom}
Let $E$ be a locally complete lcHs over $\C$ and $\Omega\subset\C^{d}$ open. 
If $f\colon\Omega\to E$ is holomorphic, then $\partial^{\beta}_{\C}f$ is holomorphic for all $\beta\in\N_{0}^{d}$.
\end{cor}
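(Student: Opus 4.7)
The plan is to reduce to the surprisingly weak condition $c)$ in \prettyref{thm:holom.equiv}, which only asks for the existence of the first complex partial derivatives pointwise in $\widehat{E}$, without any a priori continuity hypothesis. This condition is particularly well-suited to proving that derivatives of holomorphic functions are holomorphic, because the hypothesis on the derivative essentially becomes a statement about second-order mixed derivatives of $f$.

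First I would invoke \prettyref{cor:compl.deriv.commute}: since $f\colon\Omega\to E$ is holomorphic and $E$ is locally complete, $f\in\mathcal{D}^{\infty}_{\C}(\Omega,E)$ and the iterated complex partial derivatives are independent of the order in which they are taken. In particular, for any $\beta\in\N_{0}^{d}$ the function $g:=\partial^{\beta}_{\C}f\colon\Omega\to E$ is well-defined, and for every $1\leq j\leq d$ and $z\in\Omega$ we have
\[
(\partial^{e_{j}}_{\C})^{E}g(z)=(\partial^{e_{j}}_{\C})^{E}(\partial^{\beta}_{\C})^{E}f(z)=(\partial^{\beta+e_{j}}_{\C})^{E}f(z)\in E\subset\widehat{E},
\]
where the last inclusion is a consequence of $f\in\mathcal{D}^{\infty}_{\C}(\Omega,E)$ together with the commutativity of the partial derivatives.

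This already says that $g$ satisfies condition $c)$ of \prettyref{thm:holom.equiv}. Applying the implication $c)\Rightarrow a)$ of that theorem yields that $g=\partial^{\beta}_{\C}f$ is holomorphic on $\Omega$, finishing the proof.

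The only potential obstacle lies in resisting the temptation to argue directly: one could instead try to verify $\partial^{\beta}_{\C}f\in\mathcal{O}(\Omega,E)$ by checking that $(\partial^{\beta}_{\C}f)\circ\phi^{-1}\in\mathcal{C}^{1}(\phi(\Omega),E)$, which would require a genuine continuity argument for the real partial derivatives of $\partial^{\beta}_{\C}f$ (say, via differentiating Cauchy's integral formula \eqref{eq:cauchy.int.formula.1} with \prettyref{lem:compl.diff.under.int} and \prettyref{prop:first.real.compl.part.deriv}). The route through condition $c)$ sidesteps this entirely, since the theorem absorbs the continuity by itself.
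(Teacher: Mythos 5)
Your proposal is correct and follows essentially the same route as the paper: both deduce from \prettyref{cor:compl.deriv.commute} (together with Cauchy's integral formula) that $\partial^{e_{j}}_{\C}\bigl(\partial^{\beta}_{\C}f\bigr)(z)=\partial^{\beta+e_{j}}_{\C}f(z)\in E$ for all $z\in\Omega$ and $1\leq j\leq d$, and then conclude via the implication $c)\Rightarrow a)$ of \prettyref{thm:holom.equiv}. Your closing remark about why condition $c)$ is the right entry point --- it absorbs the continuity that a direct verification of $\partial^{\beta}_{\C}f\in\mathcal{O}(\Omega,E)$ would require --- accurately captures the point of the paper's argument.
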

\begin{proof}
Let $\beta\in\N_{0}^{d}$ and $1\leq j\leq d$. Then we deduce from \prettyref{cor:compl.deriv.commute} 
and Cauchy's integral formula that 
\[
 \partial^{e_{j}}_{\C}\bigl(\partial^{\beta}_{\C}f\bigr)(z)=\partial^{\beta+e_{j}}_{\C}f(z)\in E,\quad z\in\Omega.
\]
It follows from \prettyref{thm:holom.equiv} `$a)\Leftrightarrow c)$' that $\partial^{\beta}_{\C}f$ is holomorphic.
\end{proof}

Defining the subspace $\mathcal{C}^{\infty}_{\C}(\Omega,E)$ of $\mathcal{D}^{\infty}_{\C}(\Omega,E)$ 
which consists of all elements of $\mathcal{D}^{\infty}_{\C}(\Omega,E)$ such that all complex partial derivatives of any order 
are continuous, we state the following consequence of the corollary above.

\begin{cor}\label{cor:hol_infinitely_compl_diff}
Let $E$ be a locally complete lcHs over $\C$ and $\Omega\subset\C^{d}$ open. 
Then the following statements are equivalent for a function $f\colon\Omega\to E$.
\begin{enumerate}
 \item [a)] $f$ is holomorphic on $\Omega$.
 \item [b)] $f\in\mathcal{C}^{\infty}_{\C}(\Omega,E)$.
\end{enumerate}
\end{cor}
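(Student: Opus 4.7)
The plan is to assemble the equivalence directly from results already established in the paper; no new estimates are needed.

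For the implication $b) \Rightarrow a)$, which is essentially immediate, I would argue as follows. If $f\in\mathcal{C}^{\infty}_{\C}(\Omega,E)\subset\mathcal{D}^{\infty}_{\C}(\Omega,E)\subset\mathcal{D}^{1}_{\C}(\Omega,E)$, then $\partial^{e_{j}}_{\C}f(z)=(\partial^{e_{j}}_{\C})^{E}f(z)\in E$ exists for every $z\in\Omega$ and $1\leq j\leq d$. By \prettyref{rem:properties.of.holom.map} e), this value coincides with $(\partial^{e_{j}}_{\C})^{\widehat{E}}f(z)$, so condition c) of \prettyref{thm:holom.equiv} is satisfied, and the implication `$c)\Rightarrow a)$' in that theorem gives the holomorphy of $f$ on $\Omega$.

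For the converse $a)\Rightarrow b)$, I would combine \prettyref{cor:compl.deriv.commute} with \prettyref{cor:compl.deriv.are.holom}. The former, applied to the holomorphic function $f$, yields $f\in\mathcal{D}^{\infty}_{\C}(\Omega,E)$ and the independence of the order of complex partial derivatives. The latter, applied inductively to any multi-index $\beta\in\N_{0}^{d}$, shows that $\partial^{\beta}_{\C}f\colon\Omega\to E$ is again holomorphic on $\Omega$ and hence continuous on $\Omega$ by \prettyref{rem:properties.of.holom.map} b). Together these give $f\in\mathcal{C}^{\infty}_{\C}(\Omega,E)$.

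The argument is routine once the preceding machinery is in place; I do not anticipate a serious obstacle. The only point requiring a bit of care is the notational compatibility between the requirement that the complex partial derivatives live in $E$ (as built into the definition of $\mathcal{D}^{1}_{\C}(\Omega,E)$) and the appearance of $\widehat{E}$ in \prettyref{thm:holom.equiv} c); this is precisely what is bridged by \prettyref{rem:properties.of.holom.map} e).
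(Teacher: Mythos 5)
Your proposal is correct and follows essentially the same route as the paper: $b)\Rightarrow a)$ via \prettyref{thm:holom.equiv} `$c)\Rightarrow a)$', and $a)\Rightarrow b)$ by combining \prettyref{cor:compl.deriv.commute} and \prettyref{cor:compl.deriv.are.holom} with the continuity from \prettyref{rem:properties.of.holom.map} b). The remark on reconciling $E$ with $\widehat{E}$ via \prettyref{rem:properties.of.holom.map} e) is a fine (if implicit in the paper) point of care, but changes nothing substantive.
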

\begin{proof}
The implication `$b) \Rightarrow a)$' follows from \prettyref{thm:holom.equiv} `$c)\Rightarrow a)$'. The other implication 
is a consequence of \prettyref{cor:compl.deriv.commute} and \prettyref{cor:compl.deriv.are.holom},
which gives that the holomorphic function $\partial^{\beta}_{\C}f$, $\beta\in\N_{0}^{d}$, is continuous 
by \prettyref{rem:properties.of.holom.map} b).
\end{proof}

\section{Various further results}

The following generalisation of \prettyref{prop:first.real.compl.part.deriv} describes the relation between higher order 
real and complex partial derivatives. For convenience we recall the definition of higher real partial derivatives. 
Let $k\in\N_{0,\infty}$, $\Omega\subset\R^{d}$ open, $E$ an lcHs and $f\in\mathcal{C}^{k}(\Omega,E)$. 
For $\beta=(\beta_{1},\ldots,\beta_{d})\in\N_{0}^{d}$ with 
$|\beta|:=\sum_{j=1}^{d}\beta_{j}\leq k$ we set 
$\partial^{\beta_{j}}_{\R}f:=(\partial^{\beta_{j}}_{\R})^{E}f:=f$ if $\beta_{j}=0$, and
\[
\partial^{\beta_{j}}_{\R}f:=(\partial^{\beta_{j}}_{\R})^{E}f
:=\underbrace{(\partial^{e_{j}}_{\R})\cdots(\partial^{e_{j}}_{\R})}_{\beta_{j}\text{-times}}f
\]
if $\beta_{j}\neq 0$ as well as 
\[
\partial^{\beta}_{\R}f:=(\partial^{\beta}_{\R})^{E}f
:=(\partial^{\beta_{1}}_{\R})\cdots(\partial^{\beta_{d}}_{\R})f.
\]

\begin{prop}\label{prop:real.compl.part.deriv}
If $E$ is a locally complete lcHs over $\C$, $\Omega\subset\C^{d}$ open 
and $f\colon\Omega\to E$ holomorphic, then $f\in\mathcal{C}^{\infty}_{\R}(\Omega,E)$ and 
\begin{equation}\label{eq:real.compl.part.deriv.1}
\partial^{\beta}_{\R}(f\circ \phi^{-1})(x)=i^{\sum_{k=1}^{d}\beta_{2k}}
\partial^{(\beta_{1}+\beta_{2},\ldots,\beta_{2d-1}+\beta_{2d})}_{\C}f(\phi^{-1}(x)),
\quad x\in\phi(\Omega),\,\beta\in\N_{0}^{2d}.
\end{equation}
\end{prop}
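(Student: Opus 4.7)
The plan is to prove both assertions---regularity $f \in \mathcal{C}^{\infty}_{\R}(\Omega,E)$ and the formula \eqref{eq:real.compl.part.deriv.1}---simultaneously by induction on $n := |\beta|$. The base step $n=0$ is trivial, and $n=1$ is exactly \prettyref{prop:first.real.compl.part.deriv}: for $\beta = e_{2j-1}$ we get the factor $i^{0}=1$ and for $\beta = e_{2j}$ we get $i^{1}=i$, while $(\beta_{2j-1}+\beta_{2j})=1$ in both cases. Moreover, by \prettyref{cor:hol_infinitely_compl_diff} and \prettyref{cor:compl.deriv.are.holom} every complex partial derivative $\partial^{\gamma}_{\C} f$ exists in $E$ and is itself holomorphic, hence continuous by \prettyref{rem:properties.of.holom.map} b). Combined with the $n=1$ formula this gives $f \in \mathcal{C}^{1}_{\R}(\Omega,E)$.

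For the induction step, suppose $f \circ \phi^{-1} \in \mathcal{C}^{n}(\phi(\Omega),E)$ and that \eqref{eq:real.compl.part.deriv.1} holds for all multi-indices of length at most $n$. Fix $\beta \in \N_{0}^{2d}$ with $|\beta|=n+1$ and write $\beta = \tilde{\beta}+e_{l}$ with $|\tilde{\beta}|=n$. Since $f \circ \phi^{-1}$ is $\mathcal{C}^{n}$, Schwarz's theorem lets me factor $\partial^{\beta}_{\R} = \partial^{e_{l}}_{\R}\partial^{\tilde{\beta}}_{\R}$. The induction hypothesis gives
\[
\partial^{\tilde{\beta}}_{\R}(f \circ \phi^{-1})(x) = i^{\sum_{k=1}^{d}\tilde{\beta}_{2k}} \bigl(\partial^{\tilde{\gamma}}_{\C} f\bigr)(\phi^{-1}(x)),
\qquad \tilde{\gamma}_{j}:=\tilde{\beta}_{2j-1}+\tilde{\beta}_{2j}.
\]
By \prettyref{cor:compl.deriv.are.holom} the function $g := \partial^{\tilde{\gamma}}_{\C} f$ is holomorphic on $\Omega$, so \prettyref{prop:first.real.compl.part.deriv} applies to $g$ and yields $\partial^{e_{l}}_{\R}(g \circ \phi^{-1})(x) = \partial^{e_{j}}_{\C} g(\phi^{-1}(x))$ when $l = 2j-1$ and $= i\,\partial^{e_{j}}_{\C} g(\phi^{-1}(x))$ when $l = 2j$. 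In either case $\partial^{e_{j}}_{\C} g = \partial^{\tilde{\gamma}+e_{j}}_{\C} f$ by \prettyref{cor:compl.deriv.commute}, and a direct index check shows that $(\tilde{\gamma}+e_{j})_{j} = \tilde{\beta}_{2j-1}+\tilde{\beta}_{2j}+1 = \beta_{2j-1}+\beta_{2j}$ while the $i$-exponent advances from $\sum_{k}\tilde{\beta}_{2k}$ to $\sum_{k}\beta_{2k}$ exactly when $l=2j$. This establishes \eqref{eq:real.compl.part.deriv.1} for $\beta$, and the continuity of the right-hand side (holomorphy of $\partial^{\tilde{\gamma}+e_{j}}_{\C} f$ together with \prettyref{rem:properties.of.holom.map} b)) upgrades the regularity from $\mathcal{C}^{n}$ to $\mathcal{C}^{n+1}$.

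The key tool that makes the recursion close is \prettyref{cor:compl.deriv.are.holom}: it ensures that after applying an iterated complex partial derivative we still land in the class of holomorphic functions, so \prettyref{prop:first.real.compl.part.deriv} can be invoked again. There is no genuine technical obstacle; the only delicate point is the bookkeeping of how the index $\tilde{\gamma}$ absorbs the new $e_{j}$ and how the power of $i$ updates in the two cases $l=2j-1$ versus $l=2j$, both of which are immediate from the form of $\phi$.
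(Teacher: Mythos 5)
Your proof is correct and follows essentially the same route as the paper: induction on $|\beta|$, peeling off one real derivative via Schwarz' theorem, and closing the recursion by applying \prettyref{prop:first.real.compl.part.deriv} to the holomorphic function $\partial^{\tilde{\gamma}}_{\C}f$ supplied by \prettyref{cor:compl.deriv.are.holom}. The only (harmless) difference is that the paper obtains $f\in\mathcal{C}^{\infty}_{\R}(\Omega,E)$ at once from \prettyref{thm:holom.equiv} `$a)\Leftrightarrow d)$', whereas you bootstrap the regularity order by order inside the induction.
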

\begin{proof}
Due to \prettyref{thm:holom.equiv} `$a)\Leftrightarrow d)$' we have $f\in\mathcal{C}^{\infty}_{\R}(\Omega,E)$, 
implying that the left-hand side of \eqref{eq:real.compl.part.deriv.1} is defined whereas the right-hand side is defined 
by Cauchy's integral formula. Now, for $\beta=0$ equation \eqref{eq:real.compl.part.deriv.1} is trivial. 
Let $n\in\N_{0}$ and assume that \eqref{eq:real.compl.part.deriv.1} holds for all 
$\beta\in\N_{0}^{2d}$ with $|\beta|=n$. 
Let $\beta\in\N_{0}^{2d}$ with $|\beta|=n+1$. Then there is $j\in\N$, 
$1\leq j\leq 2d$, and $\widetilde{\beta}\in\N_{0}^{2d}$ with $|\widetilde{\beta}|=n$ 
such that $\beta=\widetilde{\beta}+e_{j}$. We set 
\[
g:=\partial^{(\widetilde{\beta}_{1}+\widetilde{\beta}_{2},\ldots,\widetilde{\beta}_{2d-1}+\widetilde{\beta}_{2d})}_{\C}f
\quad\text{and}\quad C:=i^{\sum_{k=1}^{d}\widetilde{\beta}_{2k}}
\]
and obtain for $x\in\phi(\Omega)$ by Schwarz' theorem and the induction hypothesis
\[
\partial^{\beta}_{\R}(f\circ \phi^{-1})(x)
=\partial^{e_{j}}_{\R}\bigl[\partial^{\widetilde{\beta}}_{\R}(f\circ \phi^{-1})\bigr](x)
=C\cdot\partial^{e_{j}}_{\R}(g\circ\phi^{-1})(x).
\]
We deduce from \prettyref{cor:compl.deriv.are.holom} that $g$ is holomorphic and from \prettyref{cor:hol.in.O} 
that $g\in\mathcal{O}(\Omega,E)$.
Thus we can apply \prettyref{prop:first.real.compl.part.deriv} to $g$ and compute
\[
\partial^{e_{j}}_{\R}(g\circ\phi^{-1})(x)
=i\partial^{e_{j}}_{\C}g(\phi^{-1}(x)),
\]
if $j$ is even, and
\[
\partial^{e_{j}}_{\R}(g\circ\phi^{-1})(x)
=\partial^{e_{j}}_{\C}g(\phi^{-1}(x)),
\]
if $j$ is odd, which implies by \prettyref{cor:compl.deriv.commute} 
\[
\partial^{\beta}_{\R}(f\circ \phi^{-1})(x)
=C\cdot\partial^{e_{j}}_{\R}(g\circ\phi^{-1})(x)
=i^{\sum_{k=1}^{d}\beta_{2k}}\partial^{\beta}_{\C}f(\phi^{-1}(x)).
\]
\end{proof}

We call a non-empty subset $A$ of an lcHs $E$ \emph{locally closed} if every local limit point of $A$ belongs to $A$. A point $x\in E$ is called a local limit point of $A$ if there is a sequence $(x_{n})_{n\in\N}$ in $A$ that converges locally to $x$ (see \cite[Definition 5.1.14, p.\ 154-155]{Bonet}), i.e.\ there is a disk $D\subset E$ such that $(x_{n})$ converges to $x$ in $E_{D}$ (see \cite[Definition 5.1.1, p.\ 151]{Bonet}). 
In particular, every locally complete 
subspace of $E$ is locally closed by \cite[Proposition 5.1.20 (i), p.\ 155]{Bonet}.
Now, the identity theorem for vector-valued holomorphic functions in several variables takes the following form 
where the vector-valued one variable case can be found in \cite[Corollary 10 (c), p.\ 233]{B/F/J} and 
the scalar-valued several variables case for example in \cite[Proposition 1.7.10, p.\ 50]{pflug2008}. 
Its version for Banach-valued holomorphic functions on an open subset of a Banach space is given in 
\cite[5.7 Proposition, p.\ 37]{mujica1985}.

\begin{thm}[{Identity theorem}]\label{thm:identity}
Let $E$ be a locally complete lcHs over $\C$, $F\subset E$ a locally closed subspace, 
$\Omega\subset\C^{d}$ open and connected and $f\colon\Omega\to E$ holomorphic. 
If 
\begin{enumerate}
 \item[(i)] the set $\Omega_{F}:=\{z\in\Omega\;|\;f(z)\in F\}$ has an accumulation point in $\Omega$, or if
 \item[(ii)] there exists $z_{0}\in\Omega$ such that $\partial^{\beta}_{\C}f(z_{0})\in F$ for all $\beta\in\N_{0}^{d}$,
\end{enumerate}
then $f(z)\in F$ for all $z\in\Omega$.
\end{thm}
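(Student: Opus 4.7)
The plan is the standard connectedness argument based on the set
\[
A := \bigl\{z \in \Omega : (\partial^{\beta}_{\C})^{E}f(z) \in F \text{ for all } \beta \in \N_{0}^{d}\bigr\}.
\]
I would show $A$ is non-empty, open, and relatively closed in $\Omega$; connectedness then forces $A = \Omega$, and the case $\beta = 0$ yields the desired conclusion. Non-emptiness is immediate under hypothesis (ii), where $z_{0} \in A$. Under hypothesis (i), let $z_{0}$ be an accumulation point of $\Omega_{F}$ with $(z_{n}) \subset \Omega_{F}$, $z_{n} \neq z_{0}$, converging to $z_{0}$; I plan to first verify that $f(z_{0}) \in F$ by means of the local Taylor expansion described below (which makes $f$ continuous into some Banach space $E_{K}$, so that $f(z_{n}) \to f(z_{0})$ locally and local closedness of $F$ applies), and then bootstrap to all higher Taylor coefficients by restricting $f$ to a complex line through $z_{0}$ along which $z_{0}$ remains an accumulation point of the corresponding restricted $\Omega_{F}$, reducing to the one-variable identity theorem of \cite[Corollary 10 (c), p.\ 233]{B/F/J}.

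For openness, fix $z \in A$ and $\rho \in (0,\infty)^{d}$ with $\overline{\D_{\rho}(z)} \subset \Omega$. The set $K := \oacx(f(\partial_{0}\D_{\rho}(z)))$ is a closed, bounded, absolutely convex subset of $E$ (bounded as the closed absolutely convex hull of the compact set $f(\partial_{0}\D_{\rho}(z))$), so $E_{K}$ is a Banach space by local completeness of $E$. Combining \prettyref{cor:cauchy.ineq} with the bipolar theorem yields $\rho^{\beta}a_{\beta} \in K$ for the Taylor coefficients $a_{\beta} := \partial^{\beta}_{\C}f(z)/\beta!$, i.e.\ $\|a_{\beta}\|_{K} \leq \rho^{-\beta}$ in the gauge of $K$. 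For $\rho' < \rho$ (componentwise) and $w \in \D_{\rho'}(z)$ this turns both the Taylor series of $f$ and each of its formal term-wise partial derivatives $\sum_{\gamma} \frac{(\gamma + \beta)!}{\gamma!}a_{\gamma + \beta}(w - z)^{\gamma}$ into absolutely convergent series in $E_{K}$. Each partial sum lies in $F \cap E_{K}$, and $F \cap E_{K}$ is closed in $E_{K}$ since $F$ is locally closed. Identifying the limits with $\partial^{\beta}_{\C}f(w)$ via \prettyref{thm:powerseries}, one gets $\D_{\rho'}(z) \subset A$.

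For relative closedness, take $z_{n} \in A$ with $z_{n} \to z \in \Omega$. By \prettyref{cor:compl.deriv.are.holom} each $\partial^{\beta}_{\C}f$ is itself holomorphic, so running the same bipolar-Cauchy machine for $\partial^{\beta}_{\C}f$ supplies a disk $K_{\beta}$ for which the local Taylor series of $\partial^{\beta}_{\C}f$ around $z$ converges in the Banach space $E_{K_{\beta}}$; in particular $\partial^{\beta}_{\C}f(z_{n}) \to \partial^{\beta}_{\C}f(z)$ in $E_{K_{\beta}}$, which is local convergence. Local closedness of $F$ then forces $\partial^{\beta}_{\C}f(z) \in F$, so $z \in A$. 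The main obstacle is the non-emptiness step under hypothesis (i) in several variables, because a bare accumulation point of $\Omega_{F}$ does not suffice even in the scalar identity theorem when $d > 1$; the reduction to one variable via a suitable complex line is what carries the argument. A secondary technical crux is the bipolar step that fits all Taylor coefficients into one single disk $K$, without which the series would only converge in the ambient lcHs topology of $E$ and local closedness of $F$ would be too weak to conclude.
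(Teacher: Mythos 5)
Your connectedness argument on $A:=\{z\in\Omega:\partial^{\beta}_{\C}f(z)\in F\text{ for all }\beta\}$ is sound, and under hypothesis (ii) it yields a complete, correct proof: the openness and closedness steps, which fit all Taylor coefficients into a single closed disk $K=\oacx(f(\partial_{0}\D_{\rho}(z)))$ via the Cauchy inequality \prettyref{cor:cauchy.ineq} and the bipolar theorem so that the series and its termwise derivatives converge in the Banach space $E_{K}$ and local closedness of $F$ can be applied to the partial sums, are exactly the quantitative use of local completeness that is needed. This is a genuinely different and more self-contained route than the paper's, which disposes of the whole theorem in two lines by combining \prettyref{prop:eps-prod-holom} with the abstract weak-strong principle of \cite[Corollary 8, p.~232]{B/F/J}, applied to $Y=\mathcal{O}(\Omega)$ and the subspace $X=\operatorname{span}\{\delta_{z}\mid z\in\Omega_{F}\}$ in case (i), resp.\ $X=\operatorname{span}\{\delta_{z_{0}}\circ\partial^{\beta}_{\C}\mid\beta\in\N_{0}^{d}\}$ in case (ii); your version buys transparency at the cost of length, the paper's buys brevity at the cost of outsourcing all the work.

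The gap is where you yourself locate it, and your proposed repair does not close it. Restricting to a complex line through $z_{0}$ fails twice over: first, an accumulation point of $\Omega_{F}$ need not remain an accumulation point on any complex line -- for $z_{n}=(1/n,1/n^{2})\to(0,0)$ no complex line through the origin contains infinitely many $z_{n}$ -- and second, even when such a line exists, the one-variable identity theorem along it only forces the directional derivatives in that one direction into $F$, not all $\partial^{\beta}_{\C}f(z_{0})$. The obstruction is in fact intrinsic: hypothesis (i) as stated is insufficient for $d\geq 2$. Take $E=\C$, $F=\{0\}$, $\Omega=\C^{2}$ and $f(z_{1},z_{2})=z_{1}$; then every point of $\Omega_{F}=\{z_{1}=0\}$ is an accumulation point, yet $f\not\equiv 0$. (The paper's own reduction silently requires $\operatorname{span}\{\delta_{z}\mid z\in\Omega_{F}\}$ to be separating on $\mathcal{O}(\Omega)$, i.e.\ $\Omega_{F}$ to be a set of uniqueness, which an accumulation point guarantees only for $d=1$.) So under (i) no argument can succeed without strengthening the hypothesis, whereas under (ii) your proof stands as written.
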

\begin{proof}
This follows from \prettyref{prop:eps-prod-holom} and \cite[Corollary 8, p.\ 232]{B/F/J} 
with the Fr\'echet-Schwartz space $Y:=\mathcal{O}(\Omega)$ and the separating subspace 
$X:=\operatorname{span}\{\delta_{z}\;|\;z\in\Omega_{F}\}\subset Y'$ in (i) 
resp.\ $X:=\operatorname{span}\{\delta_{z_{0}}\circ\partial^{\beta}_{\C} \;|\;\beta\in\N_{0}^{d}\}\subset Y'$ in (ii).
\end{proof}

\begin{thm}[{Liouville}]\label{thm:liouville}
Let $E$ be a locally complete lcHs over $\C$, $f\colon\C^d\to E$ holomorphic and $k\in\N_{0}$. 
Then the following assertions are equivalent.
\begin{enumerate}
 \item[a)] $f$ is a polynomial of degree $\leq k$.
 \item[b)] $\forall\alpha\in\mathfrak{A}\;\exists\;C,R>0\;\forall\;z\in\C^{d},\,|z|\geq R:\;p_{\alpha}(f(z))\leq C|z|^k$
\end{enumerate}
\end{thm}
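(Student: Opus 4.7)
The direction $a)\Rightarrow b)$ is elementary: a polynomial $f(z)=\sum_{|\beta|\leq k}a_{\beta}z^{\beta}$ with $a_{\beta}\in E$ satisfies $p_{\alpha}(f(z))\leq\sum_{|\beta|\leq k}p_{\alpha}(a_{\beta})|z|^{|\beta|}$, which for $|z|\geq 1$ is bounded by a constant multiple of $|z|^{k}$. So I would dispatch this direction in one line, taking $R:=1$.

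For $b)\Rightarrow a)$ the plan is to expand $f$ in a globally convergent power series around $0$ and show that all sufficiently high Taylor coefficients vanish. Applying \prettyref{thm:powerseries} with $z:=0$ and $R:=(\infty,\ldots,\infty)$ (so that $\D_{R}(0)=\C^{d}$), I obtain
\[
f=\sum_{\beta\in\N_{0}^{d}}\frac{\partial^{\beta}_{\C}f(0)}{\beta!}(\cdot)^{\beta}
\]
with convergence in $\mathcal{O}(\C^{d},E)$. It therefore suffices to prove that $\partial^{\beta}_{\C}f(0)=0$ whenever $|\beta|>k$.

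Fix $\alpha\in\mathfrak{A}$ and let $C,R>0$ be as in b). For $r>R/\sqrt{d}$ set $\rho:=(r,\ldots,r)\in(0,\infty)^{d}$ and apply Cauchy's inequality \eqref{eq:cauchy.ineq.1} at $\zeta=0$:
\[
p_{\alpha}(\partial^{\beta}_{\C}f(0))\leq\frac{\beta!}{r^{|\beta|}}\max_{z\in\partial_{0}\D_{\rho}(0)}p_{\alpha}(f(z)).
\]
On the distinguished boundary $|z_{j}|=r$ for every $j$, so $|z|=r\sqrt{d}\geq R$, and hypothesis b) gives $p_{\alpha}(f(z))\leq C(r\sqrt{d})^{k}$. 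Substituting yields
\[
p_{\alpha}(\partial^{\beta}_{\C}f(0))\leq C\,\beta!\,d^{k/2}\,r^{k-|\beta|}.
\]
For $|\beta|>k$ the right-hand side tends to $0$ as $r\to\infty$, so $p_{\alpha}(\partial^{\beta}_{\C}f(0))=0$. Since this holds for every $\alpha\in\mathfrak{A}$ and $E$ is Hausdorff, $\partial^{\beta}_{\C}f(0)=0$ for all multi-indices with $|\beta|>k$, and the power series collapses to a polynomial of degree $\leq k$.

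There is no real obstacle: once \prettyref{thm:powerseries} and \prettyref{cor:cauchy.ineq} are available over a locally complete $E$, the classical scalar proof transfers verbatim. The only mildly delicate point is the choice of polydisc: taking $\rho=(r,\ldots,r)$ lets me bound the sup over $\partial_{0}\D_{\rho}(0)$ using the Euclidean growth hypothesis via $|z|=r\sqrt{d}$, which is the natural way to convert a growth condition stated in the Euclidean norm of $\C^{d}$ into one on a distinguished boundary.
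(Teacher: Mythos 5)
Your proposal is correct and follows exactly the route the paper indicates: the paper's own proof is a one-line sketch citing the power series expansion of \prettyref{thm:powerseries} around zero together with the Cauchy inequality \eqref{eq:cauchy.ineq.1}, and your argument fills in precisely those details (including the correct handling of the Euclidean norm on the distinguished boundary via $|z|=r\sqrt{d}$). No gaps.
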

\begin{proof}
 The implication `$a)\Rightarrow b)$' is obvious and the converse implication holds due to the power series expansion 
 \prettyref{thm:powerseries} of $f$ around zero and the Cauchy inequality \eqref{eq:cauchy.ineq.1}. 
% For any $\rho<\infty$ with $R \leq\rho_{j}$ for all $1\leq j\leq d$ and $\beta\in\N_{0}^{d}$ holds
% \begin{align*}
%       p_{\alpha}(\partial^{\beta}_{\C}f(0))
% &\leq \frac{\beta !}{\rho^{\beta}}\max_{z\in\partial_{0}\D_{\rho}(0)}p_{\alpha}(f(z))
%  \leq \frac{\beta !}{\rho^{\beta}}\max_{z\in\partial_{0}\D_{\rho}(0)}|z|^k
%  \leq\frac{\beta !}{\rho^{\beta}}\bigl(\sum_{j=1}^{d}\rho_{j}\bigr)^{k}
% \end{align*}
\end{proof}

Let $\Omega\subset\C^{d}$ be open and connected. A set $A\subset\Omega$ is called \emph{thin} 
if for every $z\in A$ there 
are $R>0$ with $\D_{R}(z)\subset\Omega$ and $f\in\mathcal{O}(\D_{R}(z))$, $f\neq 0$, such that that $f=0$ on $A\cap\D_{R}(z)$ 
(see e.g.\ \cite[Chap.\ 1, Sec.\ C, 1.\ Definition, p.\ 19]{gunning1965}). 
A thin set $A\subset\Omega$ is \emph{nowhere dense} by \cite[p.\ 19]{gunning1965} and thus the complement $\Omega\setminus A$ contains 
a dense open subset.

\begin{thm}[{Riemann's removable singularities theorem}]\label{thm:riemann}
Let $E$ be an lcHs over $\C$, $G\subset E'$ a subspace, $\Omega\subset\C^{d}$ open and connected, 
$A\subset\Omega$ thin and closed and $f\colon\Omega\setminus A\to E$ such that $e'\circ f$ is holomorphic for each $e'\in G$. 
If for every $z\in\Omega$ there is a polydisc $\D_{R}(z)\subset\Omega$ such that $f$ is bounded on 
$\D_{R}(z)\setminus A$ and
\begin{enumerate}
 \item[(i)] $G$ is separating and $E$ $B_{r}$-complete, or if
 \item[(ii)] $G$ is dense in $E_{b}'$ and $E$ locally complete,
\end{enumerate}
then $f$ extends holomorphically to $\Omega$.
\end{thm}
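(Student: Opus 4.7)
My plan is to construct the extension of $f$ locally around each point of $A$ via a vector-valued Cauchy integral formula, with the scalar Riemann removable singularities theorem applied to each $e'\circ f$ for $e'\in G$ supplying the blueprint. First, for every $e'\in G$ the scalar function $e'\circ f$ is holomorphic on $\Omega\setminus A$ and locally bounded near $A$, hence by the classical scalar Riemann removable singularities theorem for thin sets it extends uniquely to $F_{e'}\in\mathcal{O}(\Omega)$. Then I would fix $z_{0}\in A$ together with a polydisc $\D_{R}(z_{0})\subset\Omega$ on which $f$ is bounded outside $A$, and produce a polyradius $\rho\in(0,R)^{d}$ such that the distinguished boundary $\partial_{0}\D_{\rho}(z_{0})$ is \emph{disjoint} from $A$. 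On such a torus $f$ is continuous with bounded image in $E$, and its composition with a smooth parametrization is weakly $\mathcal{C}^{1}$ (in fact $\mathcal{C}^{\infty}$) since $f$ is holomorphic in a neighborhood of the torus; thus \prettyref{prop:path.integral}(a) provides the Pettis integral
\[
 \widetilde{f}(w):=\frac{1}{(2\pi i)^{d}}\int_{\partial_{0}\D_{\rho}(z_{0})}\frac{f(\zeta)}{(\zeta-w)^{(1,\ldots,1)}}\,\d\zeta\;\in\; E,\qquad w\in\D_{\rho}(z_{0}),
\]
where local completeness of $E$ is part of both hypotheses ($B_{r}$-completeness in (i) implies local completeness; local completeness is directly assumed in (ii)).

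Applying $e'\in G$ under the integral and invoking the scalar Cauchy integral formula for the holomorphic function $F_{e'}$, which agrees with $e'\circ f$ on the torus because the torus avoids $A$, yields $e'(\widetilde{f}(w))=F_{e'}(w)$ for every $w\in\D_{\rho}(z_{0})$. For $w\in\D_{\rho}(z_{0})\setminus A$ this becomes $e'(\widetilde{f}(w))=e'(f(w))$ for all $e'\in G$, and since $G$ is separating in both cases (directly in (i); because density in $E_{b}'$ implies separating in (ii)) this forces $\widetilde{f}(w)=f(w)$. Iterated applications of Leibniz' rule \prettyref{lem:compl.diff.under.int}(b) to the integral representation then show that $\widetilde{f}\in\mathcal{O}(\D_{\rho}(z_{0}),E)$, so $\widetilde{f}$ is holomorphic on $\D_{\rho}(z_{0})$ by \prettyref{cor:hol.in.O}. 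Because the local extensions coincide with $f$ on the dense open set $\Omega\setminus A$, they patch consistently into a single holomorphic extension of $f$ to all of $\Omega$.

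The main obstacle is the geometric step of producing a torus $\partial_{0}\D_{\rho}(z_{0})$ truly disjoint from $A$: a naive Fubini argument only yields that the intersection has $d$-dimensional Lebesgue measure zero for generic $\rho$, whereas the Cauchy integral requires genuine emptiness so that the integrand $f$ is defined and continuous on the torus. The refinement that resolves this uses both that $A$ is closed in $\Omega$ (so the ``bad'' $\rho$-set is closed in $(0,R)^{d}$) and the thin-set hypothesis, which ensures that locally $A\subset\{g=0\}$ for a non-trivial holomorphic $g$, forcing the bad $\rho$-set to be nowhere dense. A secondary subtlety is verifying that the Pettis integral value lies in $E$ itself rather than merely in $\widehat{E}$, which is precisely what the completeness assumption in (i) and (ii) delivers via the argument in \prettyref{prop:path.integral}(a).
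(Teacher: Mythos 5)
Your route is genuinely different from the paper's: after the common first step (extending each $e'\circ f$ scalarly by the classical Riemann theorem), the paper does \emph{not} construct anything by hand but simply feeds the family $(f_{e'})_{e'\in G}$ into the abstract extension theorem \cite[Corollary 18, p.\ 238]{B/F/J} with $\mathscr{F}(\Omega)=\mathcal{O}(\Omega)$, after checking the exhaustion/density hypothesis $\partial\Omega_{n}\subset\overline{(\Omega\setminus A)\cap\Omega_{n+1}}$; that is what makes both cases (i) and (ii) come out uniformly. Your explicit Cauchy-integral construction is more self-contained in spirit, but it contains a step that, as justified, is false. You claim that thinness forces the set of ``bad'' polyradii $\rho$ (those for which $\partial_{0}\D_{\rho}(z_{0})$ meets $A$) to be nowhere dense in $(0,R)^{d}$. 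This is not true: take $d=2$, $z_{0}=0$ and $A=\{(z_{1},z_{2}):z_{1}=z_{2}+z_{2}^{2}\}$, the zero set of $g(z)=z_{1}-z_{2}-z_{2}^{2}$. For $|z_{2}|=t$ the modulus $|z_{2}+z_{2}^{2}|=t|1+z_{2}|$ sweeps out the whole interval $[t-t^{2},t+t^{2}]$, so every torus with $\rho_{2}=t$ and $\rho_{1}\in[t-t^{2},t+t^{2}]$ meets $A$; the bad set therefore contains the nonempty open region $\{(\rho_{1},\rho_{2}):\rho_{2}-\rho_{2}^{2}<\rho_{1}<\rho_{2}+\rho_{2}^{2}\}$. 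Good tori \emph{do} still exist (e.g.\ by choosing a vertex $\alpha_{0}$ of the Newton polyhedron of $g$ and $\rho=(e^{-tv_{1}},\ldots,e^{-tv_{d}})$ with $v$ generic and $t$ large, so that the monomial $c_{\alpha_{0}}z^{\alpha_{0}}$ dominates $|g|$ on the torus; alternatively one follows the classical Gunning--Rossi proof and uses a one-variable Cauchy integral on slices after arranging $g$ to be regular in one coordinate), but the argument you offer to close your self-identified ``main obstacle'' does not work and must be replaced.

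A second, smaller gap: to apply \prettyref{prop:path.integral} a) you need $f\circ\widetilde{\gamma}$ to be weakly $\mathcal{C}^{1}$ with respect to \emph{all} of $E'$, whereas the hypothesis only gives holomorphy of $e'\circ f$ for $e'$ in the subspace $G$. You therefore first have to upgrade $f$ to a genuinely holomorphic $E$-valued function on $\Omega\setminus A$; this is available via \prettyref{thm:holom.equiv} f) (using the assumed local boundedness, the separating property of $G$, and local completeness of $E$ --- which in case (i) follows from $B_{r}$-completeness via completeness), but you assert ``$f$ is holomorphic in a neighbourhood of the torus'' without this step. The remaining ingredients (the identity $e'(\widetilde{f}(w))=F_{e'}(w)$ via the scalar Cauchy formula, the identification $\widetilde{f}=f$ off $A$ by separation, holomorphy of $\widetilde{f}$ via \prettyref{lem:compl.diff.under.int} and \prettyref{cor:hol.in.O}, and the patching over the dense set $\Omega\setminus A$) are sound and consistent with the paper's machinery.
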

\begin{proof}
First, we remark that $e'\circ f$ is holomorphic and bounded on $\D_{R}(z)\setminus A$ 
for some $R$ and each $z\in\Omega$ and $e'\in G$. 
Due to the scalar version of Riemann's removable singularities theorem (see \cite[Chap.\ 1, Sec.\ C, 3.\ Theorem, p.\ 19]{gunning1965}) 
$e'\circ f$ extends to a holomorphic function $f_{e'}$ on $\Omega$ for each $e'\in G$. 
Let $(\Omega_{n})_{n\in\N}$ be any exhaustion of $\Omega$ with relatively compact, open and connected sets 
such that $\Omega_{n}\subset\Omega_{n+1}$ for every $n\in\N$. Since $M:=\Omega\setminus A$ is dense in $\Omega$, we have 
$\partial\Omega_{n}\subset\overline{\Omega}_{n+1}=\overline{M\cap\Omega_{n+1}}$. Hence our statement is true by 
\cite[Corollary 18, p.\ 238]{B/F/J} with $\mathscr{F}(\Omega):=\mathcal{O}(\Omega)$.
\end{proof}

\emph{$B_{r}$-complete spaces} (see \cite[p.\ 183]{Jarchow}) are also called \emph{infra-Pt\'ak spaces} and, 
for instance, every Fr\'echet space is $B_{r}$-complete 
by \cite[9.5.2 Krein-\u{S}mulian Theorem, p.\ 184]{Jarchow}. Condition (ii) is especially fulfilled 
if $G$ is separating and $E$ semireflexive (see \cite[p.\ 234]{B/F/J}). 
Choosing $G=E'$, we get the more familiar version of Riemann's removable singularities theorem from 
\prettyref{thm:riemann} (ii) if $E$ is locally complete.

\begin{cor}
Let $E$ be a locally complete lcHs over $\C$, $\Omega\subset\C^{d}$ open and connected, 
$A\subset\Omega$ thin and closed and $f\colon\Omega\setminus A\to E$ holomorphic. 
If for every $z\in\Omega$ there is a polydisc $\D_{R}(z)\subset\Omega$ such that $f$ is bounded on 
$\D_{R}(z)\setminus A$, then $f$ extends holomorphically to $\Omega$.
\end{cor}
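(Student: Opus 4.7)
The plan is to apply the preceding theorem \prettyref{thm:riemann} with the specific choice $G := E'$, using condition (ii). The substantive hypotheses of the corollary (the thinness and closedness of $A$, connectedness of $\Omega$, existence of bounding polydiscs) already match those of \prettyref{thm:riemann} verbatim, so only the two technical assumptions depending on $G$ need to be verified.

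First I would observe that since $f\colon\Omega\setminus A\to E$ is holomorphic, \prettyref{rem:properties.of.holom.map} d) (or equivalently \prettyref{def:holom} e) together with the equivalence in \prettyref{thm:holom.equiv}) gives that $e'\circ f\colon\Omega\setminus A\to\C$ is holomorphic for every $e'\in E'$. Thus the hypothesis of \prettyref{thm:riemann} concerning weak holomorphy with respect to $G$ is fulfilled when we set $G := E'$. Next I would check the two requirements of \prettyref{thm:riemann}(ii): the density of $G$ in $E_b'$ is trivial since $G = E'$ already is $E_b'$ as a set, and the local completeness of $E$ is one of the standing hypotheses of the corollary.

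Consequently \prettyref{thm:riemann}(ii) applies directly and yields a holomorphic extension of $f$ to all of $\Omega$, which is the desired conclusion. There is no real obstacle here: the corollary is exactly the specialisation of \prettyref{thm:riemann} described in the paragraph immediately following that theorem, and the role of the present statement is only to record the standard ``dual-free'' form in which Riemann's removable singularities theorem is most commonly encountered.
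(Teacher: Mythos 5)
Your proposal is correct and matches the paper exactly: the paper derives this corollary precisely by taking $G:=E'$ in \prettyref{thm:riemann}~(ii), with the density of $G$ in $E_{b}'$ being trivial and weak holomorphy of $f$ following from \prettyref{rem:properties.of.holom.map}~d). Nothing further is needed.
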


For holomorphic functions in infinitely many variables with values in a sequentially complete space $E$ 
the corresponding result is given in \cite[Corollary 5.2, p.\ 95]{bochnak1971}.

We close our treatment of holomorphic functions with another application 
of the power series expansion given in \prettyref{thm:powerseries}.
For an lcHs $E$ over $\C$, $z\in\C^{d}$ and $R\in(0,\infty)^{d}$ we set
\[
 \mathcal{A}(\overline{\D_{R}(z)},E)
 :=\mathcal{O}(\D_{R}(z),E)\cap\mathcal{C}^{0}(\overline{\D_{R}(z)},E)
\]
and equip this space with the system of seminorms generated by
\[
\|f\|_{\alpha}:=\sup_{w\in\overline{\D_{R}(z)}}p_{\alpha}(f(w)),\quad f\in \mathcal{A}(\overline{\D_{R}(z)},E),
\]
for $\alpha\in\mathfrak{A}$. 
We write $\mathcal{A}(\overline{\D_{R}(z)}):=\mathcal{A}(\overline{\D_{R}(z)},\C)$ and in the case $z=0$ and 
$R=(1,\ldots,1)$ this space is known as the \emph{polydisc algebra}.
Further, we denote by $\mathcal{P}(\overline{\D_{R}(z)},E)$ the space of $E$-valued polynomials on 
$\overline{\D_{R}(z)}$ and aim to prove that the $E$-valued polynomials are dense 
in $\mathcal{A}(\overline{\D_{R}(z)},E)$ if $E$ is locally complete. 
If $E$ is a Fr\'echet space, this result can be found in \cite[2.\ Theorem (2), p.\ 2]{bierstedt1969}. 
Our proof is along the lines of the one for $E=\C$, $z=0$, $d=1$ and $R=1$ given in 
\cite[p.\ 366]{rudin1986}.

\begin{cor}\label{cor:disc-algebra}
 Let $E$ be a locally complete lcHs over $\C$, $z\in\C^{d}$ and $R\in(0,\infty)^{d}$. 
 Then the following statements hold. 
 \begin{enumerate}
 \item [a)] The tensor product $\mathcal{P}(\overline{\D_{R}(z)},\C)\otimes E$ is dense in 
 $\mathcal{A}(\overline{\D_{R}(z)},E)$.
 \item [b)] If $E$ is complete, then 
 \[
  \mathcal{A}(\overline{\D_{R}(z)},E)\cong \mathcal{A}(\overline{\D_{R}(z)})\varepsilon E
  \cong \mathcal{A}(\overline{\D_{R}(z)})\widehat{\otimes}_{\varepsilon} E
 \]
 where $\cong$ stands for topologically isomorphic and 
 $\mathcal{A}(\overline{\D_{R}(z)})\widehat{\otimes}_{\varepsilon} E$ 
 is the completion of the injective tensor product $\mathcal{A}(\overline{\D_{R}(z)})\otimes_{\varepsilon} E$.
 \item [c)] $\mathcal{A}(\overline{\D_{R}(z)})$ has the approximation property.
 \end{enumerate}
\end{cor}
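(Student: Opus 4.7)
The plan naturally splits along the three parts, with the main work in part (a).

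\textbf{Part (a)} would follow the classical dilation-and-Taylor argument (as in Rudin for $d=1$). Given $f\in\mathcal{A}(\overline{\D_R(z)},E)$ and $r\in(0,1)$, I would define the dilated function $f_r(w):=f(z+r(w-z))$. Then $f_r\in\mathcal{O}(\D_{R/r}(z),E)$ and $\overline{\D_R(z)}$ is a compact subset of $\D_{R/r}(z)$ since $R_k<R_k/r$ for every $k$. First, I would show $f_r\to f$ in $\mathcal{A}(\overline{\D_R(z)},E)$ as $r\to 1^-$ by invoking the uniform continuity of $f$ on the compact set $\overline{\D_R(z)}$: for each $\alpha\in\mathfrak{A}$ and $\varepsilon>0$, pick $\delta>0$ from uniform continuity, and note that $|z+r(w-z)-w|=(1-r)|w-z|$ is uniformly small in $w\in\overline{\D_R(z)}$ once $r$ is close enough to $1$. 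Second, for each fixed $r$, \prettyref{thm:powerseries} gives that the Taylor series of $f_r$ at $z$ converges in $\mathcal{O}(\D_{R/r}(z),E)$, hence uniformly on the compact subset $\overline{\D_R(z)}$; its partial sums lie in $\mathcal{P}(\overline{\D_R(z)},\C)\otimes E$. A diagonal choice of $r$ close to $1$ and a sufficiently large truncation then produces polynomial approximations to $f$ in $\mathcal{A}(\overline{\D_R(z)},E)$.

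\textbf{Part (b)} uses that $\mathcal{A}(\overline{\D_R(z)})$ is a Banach space (closed subspace of the Banach space $\mathcal{C}(\overline{\D_R(z)})$). I would mirror the argument of \prettyref{prop:eps-prod-holom} to establish a topological isomorphism $S\colon\mathcal{A}(\overline{\D_R(z)})\varepsilon E\to\mathcal{A}(\overline{\D_R(z)},E)$, $u\mapsto[w\mapsto u(\delta_w)]$. Holomorphy of $S(u)$ on the interior $\D_R(z)$ follows as in \prettyref{prop:eps-prod-holom}; continuity of $S(u)$ on the closed polydisc comes from the continuity of $w\mapsto\delta_w$ into $\mathcal{A}(\overline{\D_R(z)})_{\kappa}'$. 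For the second isomorphism, part (a) implies that $\mathcal{A}(\overline{\D_R(z)})\otimes E$ (which contains $\mathcal{P}(\overline{\D_R(z)},\C)\otimes E$) is dense in $\mathcal{A}(\overline{\D_R(z)},E)\cong\mathcal{A}(\overline{\D_R(z)})\varepsilon E$. Since $\mathcal{A}(\overline{\D_R(z)})$ is a Banach space and $E$ is complete, the $\varepsilon$-product is complete, and hence coincides with the completion of the injective tensor product.

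\textbf{Part (c)} is then a consequence: applying (a) and (b) to an arbitrary Banach space $E$ yields the density of $\mathcal{A}(\overline{\D_R(z)})\otimes_\varepsilon E$ in $\mathcal{A}(\overline{\D_R(z)})\varepsilon E$, which by the Grothendieck--Schwartz characterization of the approximation property (see \cite{Jarchow}) is equivalent to $\mathcal{A}(\overline{\D_R(z)})$ having AP. The main obstacle I expect lies in part (b): verifying that an element of $\mathcal{A}(\overline{\D_R(z)})\varepsilon E$ corresponds to an $E$-valued function that is genuinely continuous up to the distinguished boundary (not only holomorphic on the interior), so that the \prettyref{prop:eps-prod-holom}-style construction actually lands in $\mathcal{A}(\overline{\D_R(z)},E)$; once this boundary continuity is established, the remaining steps are direct adaptations.
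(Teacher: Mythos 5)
Your proposal is correct and follows essentially the same route as the paper: part a) via the dilation-plus-Taylor-truncation argument based on uniform continuity and \prettyref{thm:powerseries}, part b) via the $\varepsilon$-product map $S\colon u\mapsto[w\mapsto u(\delta_{w})]$ combined with the density from a) and completeness, and part c) from the standard density characterisation of the approximation property. The only cosmetic differences are that you dilate about the centre $z$ (via $f_{r}(w)=f(z+r(w-z))$) while the paper dilates about the origin (via $g(w)=f(rw)$), and that the paper disposes of your worry about $S$ being a topological isomorphism onto its range by citing Bierstedt's abstract result together with \prettyref{thm:holom.equiv} rather than re-running the argument of \prettyref{prop:eps-prod-holom}.
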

\begin{proof}
The map 
\[
S\colon\mathcal{A}(\overline{\D_{R}(z)})\varepsilon E\to\mathcal{A}(\overline{\D_{R}(z)},E),\;
u\longmapsto [w\mapsto u(\delta_{w})],
\]
is a (topological) isomorphism into, i.e.\ to its range, by \cite[3.1 Bemerkung, p.\ 141]{B2} 
with $Y:=\mathcal{A}(\overline{\D_{R}(z)})$ 
and \prettyref{thm:holom.equiv} `$a)\Leftrightarrow e)$'. 
Let $\alpha\in\mathfrak{A}$, $\varepsilon>0$ and $f\in\mathcal{A}(\overline{\D_{R}(z)},E)$. 
Since $\overline{\D_{R}(z)}$ is compact, $f$ is uniformly continuous on $\overline{\D_{R}(z)}$ 
and thus there is $\delta>0$ such that $p_{\alpha}(f(w)-f(x))<\varepsilon$ for all $x,w\in\overline{\D_{R}(z)}$ 
with $|w-x|<\delta$. Choosing $r>0$ such that 
\[
 1-\frac{\delta}{\sqrt{d}\max_{1\leq j\leq d} (R_{j}+|z_{j}|)}<r<1,
\]
we get $1/r>1$ and thus $\overline{\D_{R}(z)}\subset\D_{R/r}(z)$. Further, for 
every $w\in\overline{\D_{R}(z)}$ we have
\[
 |w|\leq \sqrt{d}\max_{1\leq j\leq d}|w_{j}|\leq \sqrt{d}\max_{1\leq j\leq d}(|w_{j}-z_{j}|+|z_{j}|)
 \leq \sqrt{d}\max_{1\leq j\leq d} (R_{j}+|z_{j}|),
\]
implying
\[
 |w-rw|=(1-r)|w|<\frac{\delta}{\sqrt{d}\max_{1\leq j\leq d} (R_{j}+|z_{j}|)}|w|\leq \delta.
\]
Therefore
\begin{equation}\label{eq:disc-algebra.1}
 \sup_{w\in\overline{\D_{R}(z)}}p_{\alpha}(f(w)-f(rw))\leq\varepsilon
\end{equation}
and we set $g\colon \D_{R/r}(z)\to E,\;g(w):=f(rw)$, which is a function in $\mathcal{O}(\D_{R/r}(z),E)$. 
By \prettyref{thm:powerseries} there is $N\in\N$ such that for all $n\geq N$ 
\[
 \bigl\|g-\underbrace{\sum_{|\beta|\leq n}(\cdot- z)^{\beta}\otimes 
 \frac{(\partial^{\beta}_{\C})^{E}g(z)}{\beta!}}_{=:T_{n}}\bigr\|_{\alpha}
=\sup_{w\in\overline{\D_{R}(z)}}p_{\alpha}\bigl(g(w)-
 \sum_{|\beta|\leq n}\frac{(\partial^{\beta}_{\C})^{E}g(z)}{\beta!}(w - z)^{\beta}\bigr)
<\varepsilon.
\]
We observe that the restriction of $T_{N}$ to $\overline{\D_{R}(z)}$ is an element
of $\mathcal{P}(\overline{\D_{R}(z)},\C)\otimes E=\mathcal{P}(\overline{\D_{R}(z)},E)$ 
and thus of $\mathcal{A}(\overline{\D_{R}(z)})\otimes E$ 
as well. We conclude from \eqref{eq:disc-algebra.1} that
\[
 \|f-T_{N}\|_{\alpha}\leq \|f-g\|_{\alpha}+\|g-T_{N}\|_{\alpha}<2\varepsilon,
\]
which proves our first statement. The second follows from the first by \cite[3.5 Remark, p.\ 7]{kruse2018_1} 
because $\mathcal{A}(\overline{\D_{R}(z)})$ is a Banach space and thus complete. 
The last statement results from the second by \cite[18.1.8 Theorem, p.\ 400]{Jarchow}.
\end{proof}

The consequences b) and c) of a) are also to be found in \cite[p.\ 5]{bierstedt1969} and 
\cite[Theorem 3.5, p.\ 561]{eifler1969} in combination with Mergelyan's theorem, respectively.

\subsection*{Acknowledgement}
We are thankful to the anonymous reviewer for the thorough 
review and helpful suggestions.

\bibliography{biblio}
\bibliographystyle{plain}
\end{document}